\theoremstyle{plain}
\newtheorem{theorem}{Theorem}[section]
\newtheorem{cor}[theorem]{Corollary}
\newtheorem{def-thm}[theorem]{Definition-Theorem}
\newtheorem{lemma}[theorem]{Lemma}
\newtheorem*{tha}{Theorem A}
\theoremstyle{definition}
\newtheorem{remark}[theorem]{Remark}
\def\min{\mathop{\mathrm{min}}}
\begin{document}
\title[Carlson-Griffiths'  theory  via Brownian motion]{Carlson-Griffiths'  theory via Brownian motion}
\author[X.J. Dong]
{Xianjing Dong}

\address{Academy of Mathematics and Systems Sciences \\ Chinese Academy of Sciences \\ Beijing, 100190, P.R. China}
\email{xjdong@amss.ac.cn}

  %\thanks{This work was partially supported by the
%NSFC (No. 11301260,11101201), the NSF of Jiangxi (No.
%20132BAB211003) and the YFED of Jiangxi (No. GJJ13078) of China.}

\subjclass[2010]{30D35, 32H30.} \keywords{Nevanlinna theory; Second Main Theorem; defect relation; Logarithmic Derivative Lemma; Brownian motion.}
%\thanks{The second author was supported in part by the Simon Foundation (Grant No. 531604)}
\date{}
\maketitle \thispagestyle{empty} \setcounter{page}{1}

\begin{abstract}
\noindent  Early in 1970s, Carlson-Griffiths  made a significant progress in the study
of Nevanlinna theory, who devised equi-distribution theory for  holomorphic
mappings from $\mathbb C^m$ into a projective algebraic manifold intersecting
divisors. In this paper, we   develop Carlson-Griffiths' theory by generalizing the source manifold $\mathbb C^m$ to complete K\"ahler manifolds.
\end{abstract}

\vskip\baselineskip

\setlength\arraycolsep{2pt}

\section{Introduction}
 Nevanlinna theory, devised  by R. Nevanlinna in 1925, is part of the theory of meromorphic functions which generalizes the
 Picard's little theorem. This theory was later generalized to
   parabolic manifolds by
 Stoll \cite{Stolla, S21}.
Early in 1970s, Carlson and Griffiths \cite{gri, gth} made a significant progress in the study of  Nevanlinna theory, who devised the equi-distribution theory of  holomorphic mappings from
$\mathbb C^m$ into complex projective algebraic manifolds intersecting divisors.
Later,  Griffiths and King \cite{gri1,gth} proceeded to generalize the theory to  affine algebraic manifolds.
More generalizations
were done by Sakai \cite{Sakai} in terms of Kodaira dimension,
the singular divisor was considered  by Shiffman \cite{Shiff}.
Now let's first  review Carlson-Griffiths'  work briefly.

Let $V$ be a complex projective algebraic manifold satisfying $\dim_{\mathbb C}V\leq m$. In general, we set for two  holomorphic line bundles $L_1, L_2$ over $V$ that 
\begin{eqnarray*}
% \nonumber to remove numbering (before each equatio
\overline{\left[\frac{c_1(L_2)}{c_1(L_1)}\right]}&=&\inf\left\{s\in\mathbb R: \ \omega_2<s\omega_1;  \ ^\exists\omega_1\in c_1(L_1),\  ^\exists\omega_2\in c_1(L_2) \right\}, \\
\underline{\left[\frac{c_1(L_2)}{c_1(L_1)}\right]}&=&\sup\left\{s\in\mathbb R: \ \omega_2>s\omega_1;  \ ^\exists\omega_1\in c_1(L_1),\  ^\exists\omega_2\in c_1(L_2)\right\}.
\end{eqnarray*}

 Let $f:\mathbb C^m\rightarrow V$ be a holomorphic mapping. We use  $\delta_f(D)$ to denote the defect
 of $f$ with respect to $D,$ defined by
$$\delta_f(D)=1-\limsup_{r\rightarrow\infty}\frac{N_f(r,D)}{T_f(r,L)},$$
where $N_f(r,D),$ $T_f(r,L)$ are defined in Remark \ref{remark}.
Carlson-Griffiths proved
\begin{tha}\label{} Let $f:\mathbb C^m\rightarrow V$ be a differentiably non-degenerate holomorphic mapping.
  Let $L\rightarrow V$ be a positive line bundle and let a divisor $D\in|L|$  be of  simple normal crossing type. Then
  $$\delta_f(D)\leq \overline{\left[\frac{c_1(K_V^*)}{c_1(L)}\right]}.$$
\end{tha}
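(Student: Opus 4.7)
The plan is to establish a Second Main Theorem of the shape
$$T_f(r,K_V^*)+T_f(r,L)\leq N_f(r,D)+S_f(r),$$
with error $S_f(r)=o(T_f(r,L))$ outside an exceptional set of finite Borel measure; the advertised defect bound then follows by dividing by $T_f(r,L)$ and passing to the limit superior. The novelty is to replace every spherical integration in the classical Carlson--Griffiths argument by an expectation along Brownian motion on $\mathbb{C}^m$.

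First I would fix Hermitian metrics $h_L$ on $L$ and $h$ on $K_V^*$ and pick a smooth volume form $\Omega$ on $V$ whose Ricci form represents $-c_1(K_V^*,h)$. Writing $D=\sum_j D_j$ with defining sections $s_j$, I form the Carlson--Griffiths singular volume form
$$\Psi = \frac{f^*\Omega}{\prod_j |s_j|^2_{h_L}\bigl(\log|s_j|^2_{h_L}\bigr)^2}.$$
Differentiable non-degeneracy of $f$ ensures $\Psi$ is strictly positive off a proper analytic set. Next, let $B_t$ denote standard Brownian motion on $\mathbb{C}^m$ started at $0$ and $\tau_r=\inf\{t\geq 0:|B_t|=r\}$ its first hitting time of the sphere of radius $r$. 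Dynkin's formula applied to $\log\Psi$ produces
$$\mathbb{E}_0\bigl[\log\Psi(B_{\tau_r})\bigr]=\tfrac{1}{2}\mathbb{E}_0\!\left[\int_0^{\tau_r}\!\Delta\log\Psi(B_t)\,dt\right],$$
which is the stochastic substitute for the Green--Jensen formula.

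Using the Poincar\'e--Lelong identity $dd^c\log|s_j|^2_{h_L}=[D_j]-c_1(L,h_L)$ and the Ricci identity for $dd^c\log(f^*\Omega)$, the integrand $\Delta\log\Psi$ decomposes into the trace of a smooth form representing $c_1(K_V^*)+c_1(L)$, a positive delta-current along $f^{-1}(D)$, and a smooth concave remainder. A co-area argument on the radial part of $|B_t|$ identifies the resulting Brownian expectations with the Nevanlinna characteristic and counting function, turning the right-hand side of the Dynkin identity into $T_f(r,K_V^*)+T_f(r,L)-N_f(r,D)$ plus the Brownian average of the concave remainder.

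The principal obstacle is the stochastic Logarithmic Derivative Lemma needed to bound both the boundary term $\mathbb{E}_0[\log\Psi(B_{\tau_r})]$ and the averaged concave error by $S_f(r)=o(T_f(r,L))$. One moves $\log$ outside the expectation via Jensen's inequality and applies a stochastic Calculus Lemma in the radial variable to extract the $o(T_f(r,L))$ growth outside an exceptional set of finite Borel measure; the logarithmic factors $(\log|s_j|^2_{h_L})^{-2}$ built into $\Psi$ are inserted precisely so the lemma tolerates the singularities along $D$. Assembling these pieces yields the Second Main Theorem, and feeding it into the definition of $\delta_f(D)$ produces the claimed bound $\delta_f(D)\leq\overline{[c_1(K_V^*)/c_1(L)]}$.
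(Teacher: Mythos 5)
Your proposal is correct in substance but follows the original Carlson--Griffiths route, which is genuinely different from the paper's. You build the singular volume form with $(\log\|s_j\|^2)^{-2}$ dampening factors so that the boundary term $\mathbb{E}_0[\log\Psi(B_{\tau_r})]$ can be controlled directly via Jensen's inequality and a stochastic Calculus Lemma; the paper instead uses Ru--Wong's machinery: the singular form $\Phi = \Omega/\prod\|s_j\|^2$ \emph{without} the $(\log)^2$ factors, a cover of $V$ by coordinate charts $\{U_\lambda, w_{\lambda k}\}$ adapted to $D$, a local polynomial bound of the density $\xi$ of $f^*\Phi\wedge\alpha^{m-n}$ in terms of $\|\nabla_M f_{\lambda k}\|^2/|f_{\lambda k}|^2$ and the trace $\varrho$ of $f^*\omega$, and then the Logarithmic Derivative Lemma (Theorem \ref{log1}) applied to each $m(r,\|\nabla_M f_{\lambda k}\|/|f_{\lambda k}|)$. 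The $(\log)^2$ dampening and the LDL are standard substitutes for one another; your route is more self-contained for the simple-normal-crossing case, while the paper's isolates the LDL as a freestanding result (advertised in the introduction as of independent interest) and transfers more cleanly to the general complete K\"ahler setting where the explicit dependence on $C(o,r,\delta)$ must be tracked. Two corrections are needed before the plan closes. First, the SMT you should aim for is $T_f(r,L)+T_f(r,K_V)\leq N_f(r,D)+S_f(r)$, equivalently $T_f(r,L)\leq N_f(r,D)+T_f(r,K_V^*)+S_f(r)$, not $T_f(r,L)+T_f(r,K_V^*)\leq N_f(r,D)+S_f(r)$ as you wrote: under the stated sign, dividing by $T_f(r,L)$ and using $T_f(r,K_V^*)\geq\underline{[c_1(K_V^*)/c_1(L)]}\,T_f(r,L)+O(1)$ produces $\delta_f(D)\leq\overline{[c_1(K_V)/c_1(L)]}$, not the claimed $\overline{[c_1(K_V^*)/c_1(L)]}$. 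Second, since $\dim_{\mathbb C}V=n\leq m$, the pullback $f^*\Psi$ is only an $(n,n)$-form on $\mathbb C^m$; you must wedge with $\alpha^{m-n}$ and take the resulting density against $\alpha^m$ before applying $\log$ and Dynkin's formula, exactly as the paper does with $\xi\alpha^m=f^*\Phi\wedge\alpha^{m-n}$.
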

The purpose of this paper is to generalize  Theorem A to complete  K\"ahler manifolds.
  Our method
is to combine  Logarithmic  Derivative Lemma (LDL) with stochastic technique developed by  Carne and Atsuji.
So, the first task here is  to establish LDL for meromorphic functions on
 complete  K\"ahler manifolds (see Theorem \ref{log1} below), which may be of its own interest.
 Recall that the first probabilistic proof of Nevanlinna's Second Main Theorem of meromorphic functions on $\mathbb C$ is due to  Carne \cite{carne}, who
 re-formulated  Nevanlinna's functions  in terms of Brownian motion. Later, Atsuji \cite{at, at1, at2, atsuji} obtained a Second Main Theorem of meromorphic functions defined on
 complete K\"ahler manifolds.
 Recently, Dong-He-Ru \cite{Dong} re-visited this technique and
    provided a  probabilistic proof of Cartan's Second Main Theorem.

We  introduce the main results in this paper, the notations will be provided in the later sections. Note by Remark \ref{remark} that the definitions of Nevanlinna's functions in the K\"ahler manifold case are natural extensions of the classical ones in the $\mathbb C^m$ case.
Indeed,  for  technical reasons, all the K\"ahler manifolds (as domains) considered in this paper are assumed to be open. 

Let $M$ be a complete K\"ahler manifold with $\dim_{\mathbb C}M\geq \dim_{\mathbb C}V.$
Let $L\rightarrow V$ be an arbitrary holomorphic line bundle and  fix a Hermitian metric $\omega$
 on $V.$
 
 We first establish the following Logarithmic Derivative Lemma:
\begin{theorem}\label{log1} Let
$\psi$ be a nonconstant meromorphic function on  $M.$   Then for any $\delta>0,$ there exists a function $C(o, r, \delta)>0$  independent of $\psi$ and a set  $E_\delta\subset(1,\infty)$ of finite Lebesgue measure such that
\begin{eqnarray*}
% \nonumber to remove numbering (before each equation)
   m\Big(r,\frac{\|\nabla_M\psi\|}{|\psi|}\Big)&\leq& \Big{(}1+\frac{(1+\delta)^2}{2}\Big{)}\log T(r,\psi)+\log C(o,r,\delta)
\end{eqnarray*}
 holds for $r>1$ outside  $E_\delta,$ where $o$ is a fixed reference point in $M.$
\end{theorem}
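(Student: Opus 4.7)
The plan is to adapt the probabilistic approach of Carne and Atsuji (recently revisited by Dong-He-Ru) to the complete K\"ahler setting. Let $X_t$ be Brownian motion on $M$ issued from $o$, with generator $\tfrac{1}{2}\Delta_M$, and let $\tau_r$ be its first exit time from the geodesic ball $B(o,r)$. The Nevanlinna proximity function then admits the representation $m(r,f)=\mathbb{E}_o[\log^+|f(X_{\tau_r})|]$, and via the Ahlfors-Shimizu viewpoint
\[
T(r,\psi)=\tfrac{1}{2}\mathbb{E}_o\bigl[\log(1+|\psi(X_{\tau_r})|^2)\bigr]+O(1).
\]

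First I would apply It\^o's (equivalently Dynkin's) formula to $u=\log(1+|\psi|^2)$. Away from the polar set one computes $\tfrac12\Delta_M u=\|\nabla_M\psi\|^2/(1+|\psi|^2)^2$; the Poincar\'e--Lelong formula handles the distributional contributions at zeros and poles, producing the $N$-terms inside $T(r,\psi)$. Optional stopping then yields the \emph{energy identity}
\[
\mathbb{E}_o\!\left[\int_0^{\tau_r}\frac{\|\nabla_M\psi\|^2}{(1+|\psi|^2)^2}(X_s)\,ds\right]=T(r,\psi)+O(1),
\]
which is the probabilistic substitute for the classical Ahlfors-Shimizu formula and forms the bridge between $\|\nabla_M\psi\|$ and $T(r,\psi)$.

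Second, by concavity of $\log$ and Jensen's inequality applied to the exit distribution,
\[
m\!\Big(r,\tfrac{\|\nabla_M\psi\|}{|\psi|}\Big)\leq \tfrac{1}{2}\log^+\mathbb{E}_o\!\left[\frac{\|\nabla_M\psi\|^2}{|\psi|^2}(X_{\tau_r})\right]+O(1),
\]
where the $O(1)$ absorbs First Main Theorem bounds on $m(r,\psi)$ and $m(r,1/\psi)$. Writing the integrand as $[\|\nabla_M\psi\|^2/(1+|\psi|^2)^2]\cdot[(1+|\psi|^2)^2/|\psi|^2]$ and using H\"older together with Jensen's formula for $\log|\psi|$, the expectation is controlled by a product of $T(r,\psi)$ and a radial quantity $\Phi(r)$ which, by the martingale/stopping-time structure of $X_t$, is comparable with $dT(r,\psi)/dr$.

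The principal obstacle is the \emph{calculus lemma}: converting $\log^+ T'(r,\psi)$ into $\log T(r,\psi)$ at the cost of an exceptional set $E_\delta\subset(1,\infty)$ of finite Lebesgue measure. Borel's growth lemma gives $T'(r)\leq T(r)^{1+\delta}$ off such a set; because the geometric prefactor $C(o,r,\delta)$ itself involves an auxiliary radial integral whose derivative must be estimated, a \emph{second} application of Borel's lemma is needed, yielding the iterated bound $T(r)^{(1+\delta)^2}$. Combined with the $\tfrac12$ from the Jensen step and the leading $\log T(r,\psi)$ already present, this produces exactly the coefficient $1+(1+\delta)^2/2$ in the statement. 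Keeping $C(o,r,\delta)$ independent of $\psi$ demands that all curvature-dependent estimates (Ricci lower bounds, heat-kernel comparison in $B(o,r)$) be applied only to the Brownian motion $X_t$ and not to $\psi$; completeness and the K\"ahler hypothesis are precisely what make these geometric estimates uniform.
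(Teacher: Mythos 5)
Your overall architecture is right (Brownian motion, Itô/Dynkin, Jensen on the exit distribution, and a doubled Borel-type calculus lemma producing the $(1+\delta)^2$ exponent), and you correctly locate where each piece of the final coefficient comes from. But the bridge in your second step has a genuine gap, and it is exactly the point where the paper's proof does something different.

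You try to bound $\mathbb{E}_o\!\left[\frac{\|\nabla_M\psi\|^2}{|\psi|^2}(X_{\tau_r})\right]$ by factoring the integrand as $\frac{\|\nabla_M\psi\|^2}{(1+|\psi|^2)^2}\cdot\frac{(1+|\psi|^2)^2}{|\psi|^2}$ and applying H\"older. This cannot close: any H\"older dual exponent forces you to control a positive power of $\frac{(1+|\psi|^2)^2}{|\psi|^2}$, i.e.\ moments of $|\psi|^2$ and $1/|\psi|^2$ against the exit measure, and Nevanlinna theory only controls $\log^+|\psi|$ and $\log^+\frac{1}{|\psi|}$, not $|\psi|^{\pm p}$. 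Equivalently, on the space-time side you would need $\mathbb{E}_o\!\left[\int_0^{\tau_r}\frac{\|\nabla_M\psi\|^2}{|\psi|^2}(X_t)\,dt\right]\lesssim T(r,\psi)$, which fails because the metric $\frac{1}{|\zeta|^2}\,d\zeta\wedge d\bar\zeta$ has infinite mass on $\mathbb{P}^1(\mathbb{C})$, so the Fubini/Nevanlinna-inequality step breaks.

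The paper's proof avoids this precisely by replacing the Fubini--Study form with the \emph{singular but finite-mass} form
$\Phi=\frac{1}{|\zeta|^2(1+\log^2|\zeta|)}\frac{\sqrt{-1}}{4\pi^2}\,d\zeta\wedge d\bar\zeta$, for which $\int_{\mathbb{P}^1}\Phi=1$. Then Fubini plus the Nevanlinna inequality give $T_\psi(r,\Phi)\le T(r,\psi)+O(1)$ (Lemma \ref{oo12}), the Calculus Lemma and Jensen control $\mathbb{E}_o\bigl[\log^+\frac{\|\nabla_M\psi\|^2}{|\psi|^2(1+\log^2|\psi|)}(X_{\tau_r})\bigr]$ by $(1+\delta)^2\log T(r,\psi)+\log C(o,r,\delta)$ (Lemma \ref{999a}), and the leftover factor is handled by the decomposition
\[
\log^+\frac{\|\nabla_M\psi\|}{|\psi|}\le \tfrac12\log^+\frac{\|\nabla_M\psi\|^2}{|\psi|^2(1+\log^2|\psi|)}+\tfrac12\log^+\!\bigl(1+\log^2|\psi|\bigr),
\]
whose second term, after Jensen and the FMT, contributes the additive $\log T(r,\psi)$ --- not an $O(1)$ as you claim --- giving the full coefficient $1+\frac{(1+\delta)^2}{2}$. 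So the missing idea in your proposal is the classical ``logarithmic'' singular metric on $\mathbb{P}^1(\mathbb{C})$; your H\"older substitute does not replace it.
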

The estimate of term $C(o, r, \delta)$ will be provided  when $M$ is non-positively curved (see (\ref{esti})).
    Let ${\rm{Ric}}_M$ and $\mathscr R_M$ be the Ricci curvature tensor and Ricci curvature form of $M$ respectively.
Set 
\begin{equation}\label{kappa}
  \kappa(t)=\frac{1}{2\dim_{\mathbb C}M-1}\min_{x\in \overline{B_o(t)}}R_M(x),
\end{equation}
where $R_M(x)$ is the
pointwise lower bound of Ricci curvature defined by
$$ R_M(x)=\inf_{\xi\in T_{x}M} \frac{{\rm{Ric}}_M(\xi,\bar{\xi})}{\|\xi\|^2}.$$

 We have the following Second Main Theorem (SMT) for complete K\"ahler manifolds:
\begin{theorem}\label{second}  Let a divisor $D\in|L|$  be of  simple normal
crossing type.  Let $f:M\rightarrow V$ be a differentiably non-degenerate meromorphic mapping.  Then for any $\delta>0,$ there exists a function $C(o, r, \delta)>0$  independent of $f$ and a set $E_\delta\subset(1,\infty)$ of finite Lebesgue measure such that
  \begin{eqnarray*}
  % \nonumber to remove numbering (before each equation)
      T_f(r,L)+T_f(r,K_V)+T(r,\mathscr{R}_M)
     &\leq& \overline{N}_f(r,D)+O\big{(}\log T_f(r, \omega)+\log C(o,r,\delta)\big{)}
  \end{eqnarray*}
  holds for $r>1$ outside  $E_\delta.$
  \end{theorem}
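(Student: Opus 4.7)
The plan is to adapt Carlson-Griffiths' original proof of Theorem A by replacing integration over Euclidean spheres in $\mathbb{C}^m$ with expectations against Brownian motion on $M$ (in the Carne-Atsuji framework) and by invoking Theorem \ref{log1} in place of the classical Logarithmic Derivative Lemma. The new term $T(r,\mathscr{R}_M)$ on the left-hand side arises because the Ricci curvature of $M$ enters when the pullback Jacobian of $f$ is measured against $\omega_M^{\dim_{\mathbb{C}} M}$ rather than against a flat Euclidean form; in the case $M=\mathbb{C}^m$ with its flat K\"ahler form, this term vanishes and Theorem A is recovered.

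First I would construct the Carlson-Griffiths singular volume form on $V$. Decompose $D=\sum_j D_j$ into smooth components, choose sections $s_j\in H^0(V,L_j)$ with $(s_j)=D_j$ and $L=\bigotimes_j L_j$, fix Hermitian metrics $h_j$ on $L_j$, and let $\Omega$ be a smooth positive top form on $V$ (equivalently, a Hermitian metric on $K_V^*$). Set
$$\Xi=\frac{\Omega}{\prod_j |s_j|_{h_j}^{2}\bigl(\log|s_j|_{h_j}^{2}\bigr)^{2}},$$
which has finite mass on $V$ and logarithmic poles along $D$. A standard Poincar\'e-Lelong computation yields the current identity
$$dd^c\log\Xi\equiv c_1(K_V^*)-c_1(L)+[D]\pmod{\text{bounded forms}}.$$

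Next I would apply It\^o's formula to $\log\bigl(f^*\Xi\wedge\omega_M^{\dim_{\mathbb{C}} M-\dim_{\mathbb{C}} V}/\omega_M^{\dim_{\mathbb{C}} M}\bigr)$ along a Brownian trajectory $X_t$ started at $o$ and stopped at the exit time $\tau_r$ from the geodesic ball $B_o(r)$. Pulling back the identity of the previous step via $f$, the Brownian expectation on one side produces $\overline{N}_f(r,D)$ from the integration current, the Chern terms produce $T_f(r,L)+T_f(r,K_V)$, and the Ricci form $\mathscr{R}_M$ arising from the K\"ahler Jacobian supplies $T(r,\mathscr{R}_M)$. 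The residual error is the Brownian mean of $\log(f^*\Xi/\omega_M^{\dim_{\mathbb{C}} M})$, which in local trivializations on $V$ decomposes into a finite sum of terms of the form $m\bigl(r,\|\nabla_M\psi\|/|\psi|\bigr)$, where $\psi$ runs over meromorphic functions built from coordinate representations of $f$ and the defining sections of $D$. Theorem \ref{log1} applied to each such $\psi$ bounds this proximity by $O\bigl(\log T(r,\psi)+\log C(o,r,\delta)\bigr)$; the differentiable non-degeneracy of $f$ together with $\dim_{\mathbb{C}} M\geq \dim_{\mathbb{C}} V$ yields $T(r,\psi)=O(T_f(r,\omega))$, and summing produces the claimed error valid outside the exceptional set $E_\delta$.

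The main obstacle will be justifying It\^o's formula in the presence of the logarithmic singularities of $\Xi$ along $D$. The classical device is to excise a shrinking tubular neighborhood of $f^{-1}(D)$, apply It\^o on the complement, and pass to the limit; the boundary contribution must be identified with $\overline{N}_f(r,D)$ via Poincar\'e-Lelong, and one needs a precise bound showing that Brownian paths accumulate negligible mean time near the polar locus. A secondary technicality is the coherent application of Theorem \ref{log1} over a finite cover of $V$ by charts trivializing the $L_j$ and compatible with the simple normal crossing structure of $D$, so that all local meromorphic functions produced have characteristics uniformly controlled by $T_f(r,\omega)$.
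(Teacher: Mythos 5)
Your framework is the right one, and it does track the paper's method: a Carlson--Griffiths singular volume form, Dynkin/It\^o along $X_t$ to exchange the Green-function integral of $\Delta_M\log\xi$ for a boundary expectation, the current identity to extract the characteristic and counting functions, and Theorem~\ref{log1} to absorb the residual proximity. The paper even uses the plainer form $\Phi=\Omega/\prod\|s_j\|^2$ rather than your $\Xi$ with the $(\log|s_j|^2)^2$ weight, since the log-weight is already built into the internal workings of the LDL; both choices are viable.

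However, there is a genuine gap in how you pass from $f^*D$ to ${\rm Supp}\,f^*D$, i.e.\ from $N_f(r,D)$ to $\overline N_f(r,D)$. You write $dd^c\log\Xi\equiv c_1(K_V^*)-c_1(L)+[D]$ ``mod bounded forms'' and then assert that ``the Brownian expectation on one side produces $\overline N_f(r,D)$ from the integration current.'' But the current $[D]$, pulled back by $f$, is $f^*D$ \emph{with multiplicities}, and the Green-function pairing of $f^*D$ is $N_f(r,D)$, not $\overline N_f(r,D)$. The reduction to the truncated counting function is not a by-product of Poincar\'e--Lelong alone: one must exploit the zeros of the Jacobian-type factor $f^*\Omega\wedge\alpha^{m-n}/\alpha^m$, which vanishes along $f^{-1}(D)$ to one order less than the denominator $\prod\|s_j\circ f\|^2$ does, precisely because $D$ is of simple normal crossing type. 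Concretely, the paper obtains the inequality of currents
\[
dd^c\log\xi\ \geq\ f^*c_1(L,h)-f^*{\rm Ric}\,\Omega+\mathscr R_M-{\rm Supp}\,f^*D,
\]
and the strict inequality (rather than equality with $f^*D$) encodes exactly this cancellation. Your proposal gives no argument for it, and your ``mod bounded forms'' statement is also literally incorrect: the term $dd^c\log\bigl(\log|s_j|^2\bigr)^2$ is unbounded near $D$, and its favorable sign has to be established, not assumed.

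A second, related omission is the mechanism that reduces $\mathbb E_o[\log^+\xi(X_{\tau_r})]$ to a sum of terms $m\bigl(r,\|\nabla_M f_{\lambda k}\|/|f_{\lambda k}|\bigr)$ to which Theorem~\ref{log1} applies. The paper uses Ru--Wong's local coordinates $w_{\lambda 1},\dots,w_{\lambda n}$ adapted to the normal crossings, a partition of unity $\{\phi_\lambda\}$, and the explicit pointwise bound
\[
\log^+\xi_\lambda\ \leq\ O\Bigl(\log^+\varrho+\sum_k\log^+\frac{\|\nabla_M f_{\lambda k}\|}{|f_{\lambda k}|}\Bigr)+O(1),
\]
where $\varrho\,\alpha^m=f^*\omega\wedge\alpha^{m-1}$. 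The term $\log^+\varrho$ is then handled by Lemma~\ref{calculus}, and each $f_{\lambda k}$ has $T(r,f_{\lambda k})=O(T_f(r,\omega))$ by Corollary~\ref{49}. Your proposal gestures at ``local meromorphic functions built from coordinate representations of $f$,'' but without a concrete local inequality of this type, the reduction to the LDL is not actually carried out. The ``shrinking tubular neighborhood'' device you propose for It\^o is also unnecessary here: the paper justifies Dynkin's formula for functions with pluripolar singularity sets directly, along the lines already used in Section~3.3 for the First Main Theorem.
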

When $M$ is non-positively
curved, by estimating $C(o,r,\delta)$ and $T(r,\mathscr{R}_M),$  we obtain 
 \begin{theorem}\label{nonpositive} Let a divisor $D\in|L|$  be of  simple normal crossing type.
 Let $f:M\rightarrow V$ be a differentiably non-degenerate meromorphic mapping. Then for any $\delta>0$
  \begin{eqnarray*}
  % \nonumber to remove numbering (before each equation)
      T_f(r,L)+T_f(r,K_V)
     &\leq& \overline{N}_f(r,D)+O\big{(}\log T_f(r,\omega)-\kappa(r)r^2+\delta\log r\big{)}
  \end{eqnarray*}
  holds for $r>1$ outside a set  $E_\delta\subset(1,\infty)$ of finite Lebesgue measure.
\end{theorem}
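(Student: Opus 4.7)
The plan is to derive Theorem \ref{nonpositive} from Theorem \ref{second} by quantifying, in the non-positively curved case, the two purely geometric ``error'' terms $T(r,\mathscr R_M)$ and $\log C(o,r,\delta)$. Both should be expressible in terms of $\kappa(r)$, so that combining them yields the claimed asymptotic $O\bigl(-\kappa(r)r^2+\delta\log r\bigr)$; once these two estimates are available the rest is bookkeeping, and the exceptional set $E_\delta$ and the constants are simply inherited from Theorem \ref{second}.

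First I would bring Theorem \ref{second} into the form
\begin{eqnarray*}
T_f(r,L)+T_f(r,K_V)
&\leq& \overline{N}_f(r,D)+O\bigl(\log T_f(r,\omega)+\log C(o,r,\delta)\bigr)-T(r,\mathscr R_M),
\end{eqnarray*}
and then estimate $-T(r,\mathscr R_M)$ from above. By definition (\ref{kappa}), the Ricci form of $M$ satisfies $\mathscr R_M\geq (2\dim_{\mathbb C}M-1)\kappa(|x|)\,\omega_M$ pointwise as $(1,1)$-forms, where $\omega_M$ denotes the K\"ahler form of $M$. Since $\kappa$ is non-positive and non-increasing, one has $\kappa(|x|)\geq\kappa(r)$ on $B_o(r)$, so inserting this bound into the definition of the characteristic function of a $(1,1)$-form yields $T(r,\mathscr R_M)\geq (2\dim_{\mathbb C}M-1)\,\kappa(r)\,T(r,\omega_M)$. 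On a complete non-positively curved K\"ahler manifold, Bishop--G\"unther volume comparison gives $T(r,\omega_M)=O(r^2)$, whence $-T(r,\mathscr R_M)=O(-\kappa(r)r^2)$.

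The main obstacle is the promised estimate (\ref{esti}) for $\log C(o,r,\delta)$ in the non-positively curved setting. Tracing through the stochastic proof of the Logarithmic Derivative Lemma (Theorem \ref{log1}), the constant $C(o,r,\delta)$ is governed by the heat kernel of $M$ and by moments of the exit time of Brownian motion from $B_o(r)$. On a non-positively curved $M$, Li--Yau type heat kernel bounds together with the Ricci lower bound $\kappa$ should control both quantities and give $\log C(o,r,\delta)=O\bigl(-\kappa(r)r^2+\delta\log r\bigr)$; this is the genuinely probabilistic ingredient and the only place where one really uses more than volume comparison from the curvature assumption. Once this estimate is in hand, adding it to the previous bound on $-T(r,\mathscr R_M)$ and absorbing constants into the $O(\cdot)$ produces exactly the error term in Theorem \ref{nonpositive}.
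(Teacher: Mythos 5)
Your overall plan---reduce Theorem \ref{nonpositive} to Theorem \ref{second} and then bound the two geometric error terms $-T(r,\mathscr R_M)$ and $\log C(o,r,\delta)$ by functions of $\kappa(r)$---is exactly the structure of the paper's proof. However two of your three steps are not actually carried out correctly.

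For $-T(r,\mathscr R_M)$: the pointwise comparison $\mathscr R_M\geq(2m-1)\kappa(r)\alpha$ on $B_o(r)$ is fine, but the claim that ``Bishop--G\"unther volume comparison gives $T(r,\alpha)=O(r^2)$'' is wrong in substance. Since $e_\alpha\equiv 2m$, one has $T(r,\alpha)=m\int_{B_o(r)}g_r(o,x)\,dV(x)=m\,\mathbb E_o[\tau_r]$, so what is really needed is the expected exit-time bound $\mathbb E_o[\tau_r]\leq r^2/(2m)$. Bishop--G\"unther controls the volume element, not the Green function integral; on a nonpositively curved manifold volume grows at least Euclideanly, which by itself goes the wrong way. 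The correct input is Laplacian comparison $\Delta_M r\geq (2m-1)/r$, which the paper converts into the exit-time bound via It\^o's formula and a $2m$-dimensional Bessel process comparison (Lemma \ref{yyy}). The paper's Lemma \ref{b0} instead runs the estimate through the scalar curvature $s_M\geq mR_M$ (Lemma \ref{s123}) and the identity $T(r,\mathscr R_M)=\mathbb E_o[\int_0^{\tau_r}s_M(X_t)\,dt]$, but the essential geometric ingredient is the same exit-time bound $\mathbb E_o[\tau_r]\leq r^2/(2m)$. Your route would also work once you substitute the correct comparison lemma.

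For $\log C(o,r,\delta)$: this is the genuine gap. You correctly identify it as the crucial missing estimate, but you do not prove it, and ``Li--Yau type heat kernel bounds'' is not the tool the paper uses nor one that obviously yields the required form. What is needed is a quantitative Calculus Lemma giving $\mathbb E_o[k(X_{\tau_r})]\leq C(o,r,\delta)\,(\mathbb E_o[\int_0^{\tau_r}k(X_t)\,dt])^{(1+\delta)^2}$ with an explicit $C(o,r,\delta)$. The paper's Lemma \ref{calculusII} obtains $\log C(o,r,\delta)=O(r\sqrt{-\kappa(r)}+\delta\log r)$ (which is in fact stronger than the $O(-\kappa(r)r^2)$ you would need) by combining: the Green-function comparison of Lemma \ref{sing} (Debiard--Gaveau--Mazet) and the harmonic-measure bound therein; the Green-function lower bound of Lemma \ref{zz} (Atsuji) in terms of the solution $G$ of the Jacobi-type equation $G''+\kappa G=0$; the explicit ODE bounds $r\leq G(r)\leq r\exp(r\sqrt{-\kappa(r)})$; and two applications of the Borel growth lemma (Lemma \ref{cal1}) to the auxiliary function $\Gamma(r)=\int_0^r\!\int_t^r G^{1-2m}\!\int_{S_o(t)}k\,d\sigma_t\,dt$. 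None of this is a heat-kernel estimate, and Li--Yau bounds under a Ricci lower bound would not by themselves produce the $(1+\delta)^2$-power, Borel-excluded inequality that the Calculus Lemma requires. Without this step your proof proposal does not close.
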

We denote by $\Theta_f(D)$  another defect (without counting multiplicities) of $f$ with respect to $D,$ defined  by
$$\Theta_f(D)=1-\limsup_{r\rightarrow\infty}\frac{\overline{N}_f(r,D)}{T_f(r,L)}.$$
\begin{cor}[Defect relation]\label{defect} Assume the same conditions as in Theorem $\ref{nonpositive}$.
 If $f$ satisfies the growth condition
$$ \liminf_{r\rightarrow\infty}\frac{r^{2}\kappa(r)}{T_f(r,\omega)}=0,$$
then
$$\Theta_f(D)\underline{\left[\frac{c_1(L)}{\omega}\right]}\leq
\overline{\left[\frac{c_1(K^*_V)}{\omega}\right]}.$$
\end{cor}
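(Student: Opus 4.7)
The plan is to derive the defect relation directly from Theorem \ref{nonpositive} by dividing through by $T_f(r,L)$, converting the characteristic functions of $L$ and $K_V^*$ into multiples of $T_f(r,\omega)$ using the defining inequalities of the bracket quantities, and then extracting a good sequence $r_k\to\infty$ from the curvature-growth hypothesis.

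First, I would invoke the duality identity $T_f(r,K_V)=-T_f(r,K_V^*)+O(1)$ to rewrite Theorem \ref{nonpositive} as
\begin{equation*}
T_f(r,L)-T_f(r,K_V^*)\;\leq\;\overline{N}_f(r,D)+O\bigl(\log T_f(r,\omega)-\kappa(r)r^2+\delta\log r\bigr)
\end{equation*}
for $r>1$ outside $E_\delta$. Fixing $\epsilon>0$, the definitions of the bracket quantities supply representatives $\omega_L\in c_1(L)$ and $\omega_K\in c_1(K_V^*)$ realizing them to within $\epsilon$, and this yields the pointwise-in-$r$ comparisons
\begin{equation*}
T_f(r,L)\geq\Bigl(\underline{\left[\tfrac{c_1(L)}{\omega}\right]}-\epsilon\Bigr)T_f(r,\omega)+O(1),\quad T_f(r,K_V^*)\leq\Bigl(\overline{\left[\tfrac{c_1(K_V^*)}{\omega}\right]}+\epsilon\Bigr)T_f(r,\omega)+O(1).
\end{equation*}
Dividing the SMT by $T_f(r,L)$ and substituting gives
\begin{equation*}
\frac{\overline{N}_f(r,D)}{T_f(r,L)}\;\geq\;1-\frac{\overline{[c_1(K_V^*)/\omega]}+\epsilon}{\underline{[c_1(L)/\omega]}-\epsilon}-\frac{O\bigl(\log T_f(r,\omega)-\kappa(r)r^2+\delta\log r\bigr)}{T_f(r,L)}+o(1).
\end{equation*}

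It remains to exhibit a sequence $r_k\to\infty$ outside $E_\delta$ along which the error fraction vanishes. Differentiable non-degeneracy of $f$ forces $T_f(r,\omega)\to\infty$, so $\log T_f(r,\omega)=o(T_f(r,\omega))$. Non-positive curvature of $M$ gives $\kappa(r)\leq 0$, hence $-\kappa(r)r^2\geq 0$, and the hypothesis $\liminf_{r\to\infty}r^2\kappa(r)/T_f(r,\omega)=0$ supplies a sequence $r_k\to\infty$ with $-\kappa(r_k)r_k^2=o(T_f(r_k,\omega))$. Since $E_\delta$ has finite Lebesgue measure, its complement is cofinal in $(1,\infty)$, so the $r_k$ may be chosen outside $E_\delta$. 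Passing to $\limsup$ along $r_k$ and then letting $\epsilon,\delta\to 0^+$ yields $\Theta_f(D)\underline{[c_1(L)/\omega]}\leq\overline{[c_1(K_V^*)/\omega]}$.

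The main bookkeeping obstacle is the simultaneous handling of $E_\delta$ and the curvature liminf: one must run both limits on a single subsequence. This is resolved by the elementary observation that a finite-measure exceptional set cannot swallow a cofinal sequence, so the liminf sequence can be perturbed into $(1,\infty)\setminus E_\delta$. The $\delta\log r$ term is harmless since $\delta$ is taken to $0^+$ at the final step, while $\log T_f(r,\omega)/T_f(r,\omega)\to 0$ is automatic from differentiable non-degeneracy.
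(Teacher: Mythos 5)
Your approach is correct and is the natural derivation; the paper states the corollary without proof, and your argument (rewriting $T_f(r,K_V)=-T_f(r,K_V^*)+O(1)$, comparing $T_f(r,L)$ and $T_f(r,K_V^*)$ to $T_f(r,\omega)$ via the bracket quantities to within $\epsilon$, dividing by $T_f(r,L)$, passing to $\limsup$, and letting $\epsilon\to 0$) is exactly what is intended.

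One piece of the bookkeeping deserves correction even though the conclusion stands. The statement that "a finite-measure exceptional set cannot swallow a cofinal sequence, so the liminf sequence can be perturbed into $(1,\infty)\setminus E_\delta$" is not justified for a general nonnegative function with $\liminf=0$: the sublevel set $\{r:g(r)<\epsilon\}$ may itself have finite Lebesgue measure, a finite-measure set certainly can contain an entire cofinal sequence, and perturbing the chosen $r_k$ slightly need not keep $g$ small. The difficulty disappears here for a cleaner reason. Under the hypotheses of Theorem \ref{nonpositive}, $M$ is non-positively curved, so $\kappa\leq 0$ and hence $r^2\kappa(r)/T_f(r,\omega)\leq 0$ for all $r$. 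A non-positive function whose $\liminf$ equals $0$ must in fact tend to $0$; therefore the growth hypothesis already forces $-\kappa(r)r^2=o(T_f(r,\omega))$ along \emph{every} sequence $r\to\infty$, and any cofinal sequence in $(1,\infty)\setminus E_\delta$ works with no subsequence-selection argument needed. Replacing the perturbation claim with this sign observation makes the proof complete.
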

In particular, if $M=\mathbb C^m,$ then we have $\kappa(r)\equiv0.$ So, Corollary \ref{defect} implies Theorem A.
More general, we have SMT for singular divisors:
\begin{theorem}\label{second1}
Let $D$ be a hypersurface of $V.$
Let  $f:M\rightarrow V$ be a differentiably non-degenerate meromorphic mapping.  Then
for any $\delta>0$
  \begin{eqnarray*}
  % \nonumber to remove numbering (before each equation)
   &&T_f(r,L_D)+T_f(r,K_V)-\overline{N}_f(r,D) \\
     &\leq& m_f\big{(}r,{\rm{Sing}}(D)\big{)}
   +O\big{(}\log T_f(r,\omega)-\kappa(r)r^2+\delta\log r\big{)}
  \end{eqnarray*}
  holds for $r>1$ outside a set  $E_\delta\subset(1,\infty)$ of finite Lebesgue measure.
\end{theorem}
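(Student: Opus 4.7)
The plan is to reduce the singular divisor case to the simple normal crossing case treated in Theorem \ref{nonpositive} by passing to an embedded resolution of the pair $(V,D)$. By Hironaka's theorem, choose a proper birational morphism $\pi:\tilde V\to V$, a finite composition of blow-ups along smooth centers contained in ${\rm{Sing}}(D)$, such that $\tilde V$ is smooth, $\pi$ is an isomorphism over $V\setminus{\rm{Sing}}(D)$, and the reduced preimage $\tilde D\cup\bigcup_j E_j$ is of simple normal crossing type, where $\tilde D$ is the strict transform of $D$ and $\{E_j\}$ are the irreducible exceptional divisors lying over ${\rm{Sing}}(D)$. Write as $\mathbb{Z}$-divisors
$$\pi^*D=\tilde D+\sum_j a_j E_j,\qquad K_{\tilde V}=\pi^*K_V+\sum_j b_j E_j$$
with $a_j\geq 1$ and $b_j\geq 0$, and set $D':=\tilde D+\sum_j E_j\in |L_{D'}|$.

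Since $f$ is differentiably non-degenerate, $f(M)\not\subset{\rm{Sing}}(D)$, so $f$ admits a unique meromorphic lift $\tilde f:M\to\tilde V$ with $\pi\circ\tilde f=f$, itself differentiably non-degenerate. Applying Theorem \ref{nonpositive} to $\tilde f$ and the SNC divisor $D'$ gives, outside a set of finite Lebesgue measure,
$$T_{\tilde f}(r,L_{D'})+T_{\tilde f}(r,K_{\tilde V})\leq \overline N_{\tilde f}(r,D')+O\big(\log T_{\tilde f}(r,\tilde\omega)-\kappa(r)r^2+\delta\log r\big).$$
Functoriality of the characteristic function, $T_{\tilde f}(r,\pi^*L)=T_f(r,L)+O(1)$, combined with the two divisor identities above, rewrites the left-hand side as $T_f(r,L_D)+T_f(r,K_V)+\sum_j(b_j+1-a_j)T_{\tilde f}(r,E_j)+O(1)$, while $\overline N_{\tilde f}(r,\tilde D)\leq \overline N_f(r,D)$ since $\pi$ is an isomorphism over $V\setminus{\rm{Sing}}(D)$. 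Comparing $\tilde\omega$ with $\pi^*\omega$ on the birational model also yields $\log T_{\tilde f}(r,\tilde\omega)=O(\log T_f(r,\omega))$.

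The main obstacle is to absorb the exceptional-divisor terms $\sum_j \overline N_{\tilde f}(r,E_j)+\sum_j (a_j-1-b_j)T_{\tilde f}(r,E_j)$ into $m_f(r,{\rm{Sing}}(D))$ plus the admissible error. Since $\pi(E_j)\subset{\rm{Sing}}(D)$ has codimension at least two in $V$, differentiable non-degeneracy of $f$ forces $f^{-1}(\pi(E_j))$, and hence $\tilde f^{-1}(E_j)$, to have codimension at least two in $M$; by the purity of codimension of zero sets of holomorphic functions this forces $\tilde f^*E_j=0$, so that $N_{\tilde f}(r,E_j)=\overline N_{\tilde f}(r,E_j)=O(1)$ and $T_{\tilde f}(r,E_j)=m_{\tilde f}(r,E_j)+O(1)$ by the First Main Theorem. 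A local analysis near $E_j$ shows that a defining section of $E_j$ equals, up to a holomorphic unit, the $\pi$-pullback of sections vanishing along $\pi(E_j)\subset{\rm{Sing}}(D)$; thus $m_{\tilde f}(r,E_j)\leq m_f(r,{\rm{Sing}}(D))+O(\log T_f(r,\omega))$. Substituting these estimates into the rearranged SMT and collecting error terms yields the claimed inequality.
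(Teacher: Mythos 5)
Your overall route --- Hironaka resolution of $(V,D)$, apply the simple-normal-crossing Second Main Theorem (Theorem \ref{nonpositive}) to the lift $\widetilde f$, descend via functoriality and $K_{\widetilde V}=\tau^*K_V\otimes L_{R_\tau}$, then absorb the exceptional terms --- is the paper's. But there is a genuine gap in the absorption step. You treat $m_f\big(r,{\rm{Sing}}(D)\big)$ as an independently given quantity and claim $m_{\widetilde f}(r,E_j)\leq m_f(r,{\rm{Sing}}(D))+O(\log T_f(r,\omega))$ for each $j$ by a ``local analysis.'' This is not the right estimate and could not close the argument anyway: summing these individual bounds against the coefficients $a_j-1-b_j$ produces the factor $\sum_j(a_j-1-b_j)$ in front of $m_f(r,{\rm{Sing}}(D))$, which is in general larger than $1$. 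What you are missing is that the paper \emph{defines} $m_f(r,{\rm{Sing}}(D))$ through the resolution: one sets $\varsigma_j:=\max\{p_j-q_j-1,\,0\}$ (your $\max\{a_j-1-b_j,0\}$), $S^*:=\sum_j\varsigma_j\widetilde S_j$, and $m_f(r,{\rm{Sing}}(D)):=m_{\widetilde f}(r,S^*)=\sum_j\varsigma_j\,m_{\widetilde f}(r,\widetilde S_j)$, cf.\ (\ref{90}) and (\ref{100}). With this definition in hand, the needed inequality $\sum_j(a_j-1-b_j)\,m_{\widetilde f}(r,E_j)\leq m_f(r,{\rm{Sing}}(D))+O(1)$ is immediate from $a_j-1-b_j\leq\varsigma_j$ and the fact that each $m_{\widetilde f}(r,E_j)$ is bounded below; there is no $O(\log T_f)$ loss and no local analysis. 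Without adopting this definition your argument does not close.

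Two further points. Your appeal to ``purity of codimension'' to deduce $\widetilde f^*E_j=0$ is not a proof: differentiable non-degeneracy only gives full rank at a generic point of $M$, so a hypersurface of $M$ may still be contracted by $f$ into ${\rm{Sing}}(D)$, and $f^{-1}({\rm{Sing}}(D))$ can have codimension-one components. (The paper invokes $N_{\widetilde f}(r,\widetilde S)=0$ just as tersely, so the step is shared, but it does not follow from what you wrote, and you should not present it as automatic.) Finally, the paper's proof runs in two stages --- first under the hypothesis that no irreducible component of $\widetilde D$ crosses itself, using an auxiliary generic divisor $E$ together with the First Main Theorem, and then reducing a general hypersurface with normal-crossing singularities to this case by a second resolution --- and your proposal omits this reduction entirely.
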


\section{Preliminaries}
 We introduce some basics referred to
 \cite{bass,Bishop, mar,gri1,13,NN,itoo,polar}.
\subsection{Poincar\'e-Lelong formula}~

 Let $M$ be a $m$-dimensional complex manifold. A divisor $D$ on $M$ is said to be of \emph{normal crossings} if  $D$ is locally defined by an equation
$z_{1}\cdots z_{k}=0$ for a local holomorphic  coordinate system $z_1,\cdots,z_m.$ Additionally, if every irreducible component of $D$ is smooth,  one says that
$D$ is of \emph{simple normal crossings}.
A holomorphic line bundle $L\rightarrow M$ is said to be \emph{Hermitian} if $L$ is equipped with a Hermitian metric $h=(\{h_\alpha\},\{U_\alpha\}),$ where
$$h_\alpha: U_\alpha\rightarrow \mathbb R^+$$
 are positive smooth functions such that $h_\beta=|g_{\alpha\beta}|^2h_\alpha$ on $U_\alpha\cap U_\beta,$ and $\{g_{\alpha\beta}\}$ is a transition function system of $L.$ Let $\{e_\alpha\}$ be a local holomorphic   frame of $L,$  we have $\|e_\alpha\|^2_h=h_\alpha.$ A Hermitian metric $h$ of $L$ defines a global, closed and smooth  (1,1)-form $-dd^c\log h$ on $M,$ where
$$d=\partial+\bar{\partial}, \ \ d^c=\frac{\sqrt{-1}}{4\pi}(\bar{\partial}-\partial), \ \ dd^c=\frac{\sqrt{-1}}{2\pi}\partial\bar{\partial}.$$
We call $-dd^c\log h$ the Chern form denoted by $c_1(L,h)$ associated with metric $h,$ which determines a Chern class $c_1(L)\in H^2_{{\rm{DR}}}(M,\mathbb R)$, $c_1(L,h)$ is also called the curvature form of $L.$ If $c_1(L)>0,$ namely, there exists a Hermitian metric $h$ such that $-dd^c\log h>0,$ then we say that $L$ is positive, written as $L>0.$

  Let $T M$ denote the holomorphic tangent bundle of $M.$ The \emph{canonical line bundle} of $M$ is defined by $$K_M=\bigwedge^mT^*M$$
with transition functions
$g_{\alpha\beta}=\det(\partial z^\beta_j/\partial z^\alpha_i)$
on $U_\alpha\cap U_\beta.$
Given a Hermitian metric $h$ on $K_M$,  it well defines a global, positive and smooth $(m,m)$-form
$$\Omega=\frac{1}{h}\bigwedge_{j=1}^m\frac{\sqrt{-1}}{2\pi}dz_j\wedge d\bar{z}_j$$
on $M,$ which is therefore a volume form of $M.$ The Ricci form of $\Omega$ is defined by
${\rm{Ric}}\Omega=dd^c\log h.$
Clearly,
$c_1(K_M,h)=-{\rm{Ric}}\Omega.$
Conversely, if let $\Omega$ be a volume form on $M$ which is compact, there exists a unique Hermitian metric $h$ on $K_M$ such that $dd^c\log h={\rm{Ric}} \Omega.$

 Let $H^0(M,L)$ denote the vector space of  holomorphic global sections of $L$ over $M$.
For any $s\in H^0(M,L)$, the divisor $D_s$
is well defined  by $D_s\cap U_\alpha=(s)|_{U_\alpha}$.  Denote by $|L|$ the
\emph{complete linear system} of all effective divisors $D_s$ with $s\in  H^0(M,L).$ Let $D$ be a divisor on $M$, then
  $D$ defines a holomorphic line bundle $L_D$ over $M$ in such  manner: let $(\{g_\alpha\},\{U_\alpha\})$ be the local defining function system of $D,$ then the transition system is given by $\{g_{\alpha\beta}=g_\alpha/g_\beta\}.$ Note that $\{g_\alpha\}$ defines a global meromorphic  section on $M$ written as $s_D$ of $L_D$ over $M,$ called the \emph{canonical section} associated with   $D.$

We introduce the famous Poincar\'e-Lelong formula:
\begin{lemma}[Poincar\'e-Lelong formula, \cite{gri}]\label{} Let $L\rightarrow M$ be a holomorphic line bundle equipped 
with a Hermitian metric $h,$  and let $s$ be a holomorphic section of $L$ over $M$ with  zero divisor $D_s.$
Then $\log\|s\|_h$ is locally integrable on $M$ and it defines a current satisfying the current equation
  $$dd^c\log\|s\|_h^2=D_s-c_1(L,h).$$
\end{lemma}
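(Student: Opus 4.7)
The plan is to reduce the identity to a local statement on a trivializing chart, then peel off the smooth Hermitian weight and further reduce to the scalar Poincar\'e--Lelong identity for a single holomorphic function, whose model case is the classical formula $dd^c\log|z|^2=\delta_0$ on $\mathbb{C}$.

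First, I would localize on a trivializing chart $U_\alpha$ with local frame $e_\alpha$. Writing $s=s_\alpha e_\alpha$ with $s_\alpha\in\mathcal{O}(U_\alpha)$, one has $\|s\|_h^2=|s_\alpha|^2 h_\alpha$, so formally $\log\|s\|_h^2=\log|s_\alpha|^2+\log h_\alpha$. Since $h_\alpha$ is smooth and positive, $\log h_\alpha$ is smooth, and by the very definition of the Chern form, $dd^c\log h_\alpha=-c_1(L,h)$ on $U_\alpha$. Thus, up to showing local integrability of $\log\|s\|_h$ (which reduces to showing $\log|s_\alpha|\in L^1_{\mathrm{loc}}$, a standard consequence of its plurisubharmonicity combined with Weierstrass preparation near zeros of $s_\alpha$), the statement reduces to the scalar identity
\[
dd^c\log|s_\alpha|^2=[D_{s_\alpha}]\quad\text{as currents on }U_\alpha,
\]
where the right-hand side is the integration current on the zero divisor.

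Next, on a sufficiently small neighborhood I would use unique factorization in the local ring to write $s_\alpha=u\prod_j g_j^{m_j}$ with $u$ a nonvanishing holomorphic unit and $g_j$ distinct irreducible factors. Because $u$ is nowhere zero, $\log|u|^2=\log u+\log\bar u$ locally, which is pluriharmonic, so $dd^c\log|u|^2=0$. Therefore
\[
dd^c\log|s_\alpha|^2=\sum_j m_j\,dd^c\log|g_j|^2,
\]
and the problem reduces to the case of a single irreducible holomorphic function $g$.

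To handle the irreducible case, I would argue first on the smooth part of $Z=\{g=0\}$: near a smooth point of $Z$, a holomorphic change of coordinates lets me assume $g=z_1$, whence the identity $dd^c\log|z_1|^2=[\{z_1=0\}]$ follows by Fubini from the one-variable Green-type identity $dd^c\log|z|^2=\delta_0$ on $\mathbb{C}$, which is a short computation on annuli using Stokes' theorem. The main obstacle, and the step requiring care, is extending this identity across the singular locus $\mathrm{Sing}(Z)$: the difference $dd^c\log|g|^2-[Z]$ is a closed current of bidegree $(1,1)$ supported in a set of real codimension at least $4$, and the standard support/extension theorem for closed currents (of bidimension exceeding the Hausdorff dimension of their support) forces it to vanish identically. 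Reassembling the pieces on each $U_\alpha$ and using that the local identities are compatible under the transition functions $g_{\alpha\beta}$ yields the global current equation $dd^c\log\|s\|_h^2=D_s-c_1(L,h)$.
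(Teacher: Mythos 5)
The paper does not prove this lemma: it is quoted as a classical result with a citation to Carlson--Griffiths, so there is no in-paper argument to compare against. Your proposal is the standard proof (essentially the one in Griffiths--Harris or Demailly) and it is correct. The reduction $\log\|s\|_h^2=\log|s_\alpha|^2+\log h_\alpha$ with $dd^c\log h_\alpha=-c_1(L,h)$ is exactly right under the paper's conventions ($\|e_\alpha\|_h^2=h_\alpha$, $dd^c=\tfrac{\sqrt{-1}}{2\pi}\partial\overline\partial$, so the one-variable identity $dd^c\log|z|^2=\delta_0$ carries the correct constant), and local integrability follows from plurisubharmonicity of $\log|s_\alpha|$ as you say. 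Two small points you should make explicit to close the argument. First, at a smooth point of $Z=\{g=0\}$ you can only take $g=z_1$ where $dg\neq 0$; for reduced $g$ the locus $\{g=0,\ dg=0\}$ is a proper analytic subset of $Z$, hence of real codimension at least $4$ in $M$, so it can simply be absorbed into the exceptional set handled by the support theorem together with $\mathrm{Sing}(Z)$. Second, the support theorem you invoke applies to locally normal (or locally flat) currents, not to arbitrary currents of small support; here this is automatic because $dd^c\log|g|^2$ and the integration current $[Z]$ are both closed positive $(1,1)$-currents with locally finite mass (the latter by Lelong's theorem), so their difference is normal, closed, of bidimension $(m-1,m-1)$, and supported in a set of Hausdorff dimension at most $2m-4<2m-2$; the Federer-type support theorem then forces it to vanish. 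With those two clarifications the proof is complete.
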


\subsection{Brownian motions}~

 Let  $(M,g)$ be a Riemannian manifold with the Laplace-Beltrami operator $\Delta_M$ associated with  metric $g.$ 
A \emph{Brownian motion} $X_t$ in $M$
is a heat diffusion  process  generated by $\Delta_M/2$ with  \emph{transition density function} $p(t,x,y)$ being  the minimal positive fundamental solution of  heat equation
  $$\frac{\partial}{\partial t}u(t,x)-\frac{1}{2}\Delta_{M}u(t,x)=0.$$
  In particular, when $M=\mathbb R^m$
$$p(t,x,y)=\frac{1}{(2\pi t)^{\frac{m}{2}}}e^{-\|x-y\|^2/2t}.$$
 \ \ \ \ Let $X_t$ be the  Brownian motion in $M$ with generator $\Delta_M/2.$ We denote by $\mathbb P_x$ the law of $X_t$ starting from $x\in M,$
 and denote by $\mathbb E_x$ the  expectation with respect to $\mathbb P_x.$

 \noindent\textbf{A. Co-area formula}

  Let $D$ be  a bounded domain with the smooth boundary $\partial D$ in $M.$ Denote by
 $d\pi^{\partial D}_x(y)$  the harmonic measure on $\partial D$ with respect to $x,$ and by
  $g_D(x,y)$ the Green function of $\Delta_M/2$ for  $D$ with  Dirichlet boundary condition and a pole at $x,$ i.e.,
$$-\frac{1}{2}\Delta_{M}g_D(x,y)=\delta_x(y), \ \ y\in D; \ \ \ g_D(x,y)=0, \ \ y\in \partial D.$$
For each $\phi\in \mathscr{C}_{\flat}(D)$
 (space of bounded and continuous functions on $D$),  the \emph{co-area formula} \cite{bass} says that
 \begin{equation}\label{coa}
   \mathbb{E}_x\left[\int_0^{\tau_D}\phi(X_t)dt\right]=\int_{D}g_{D}(x,y)\phi(y)dV(y).
 \end{equation}
 From Proposition 2.8 in \cite{bass},  we note the relation of harmonic measures and hitting times 
 as follows
\begin{equation}\label{hello}
  \mathbb{E}_x\left[\psi(X_{\tau_{D}})\right]=\int_{\partial D}\psi(y)d\pi_x^{\partial D}(y)
\end{equation}
for  $\psi\in\mathscr{C}(\overline{D})$.
  Since $``\mathbb E_x",$  the co-area formula and (\ref{hello}) still work when $\phi, \psi$ are of  a pluripolar set of singularities.

\noindent\textbf{B. It\^o formula}

The following identity is called the \emph{It\^o formula} (see \cite{at,  NN,itoo})
$$u(X_t)-u(x)=B\left(\int_0^t\|\nabla_Mu\|^2(X_s)ds\right)+\frac{1}{2}\int_0^t\Delta_Mu(X_s)dt, \ \ \mathbb P_x-a.s.$$
for  $u\in\mathscr{C}_\flat^2(M)$ (space of bounded $\mathscr{C}^2$-class functions on $M$),
where $B_t$ is the  standard  Brownian motion in $\mathbb R,$ and $\nabla_M$ is the gradient operator on $M$.
 It  follows  the \emph{Dynkin formula}
$$ \mathbb E_x[u(X_T)]-u(x)=\frac{1}{2}\mathbb E_x\left[\int_0^T\Delta_Mu(X_t)dt\right]
$$
for a stopping time $T$ such that each term in the above formula makes sense.
Note that  Dynkin formula still holds for  $u\in\mathscr{C}^2(M)$ if $T=\tau_D.$
In further, it also works when $u$ is of a pluripolar set of singularities, particularly for a plurisubharmonic function $u.$ 

\subsection{Curvatures}~

Let $(M,g)$ be a K\"ahler manifold of complex dimension $m$. 
 We can express the Ricci curvature   of $M$ as ${\rm{Ric}}_M=\sum_{i,j}R_{i\bar{j}}dz_i\otimes d\bar{z}_j,$  where
\begin{equation}\label{syy}
  R_{i\bar{j}}=-\frac{\partial^2}{\partial z_i\partial \bar{z}_j}\log\det(g_{s\bar{t}}).
\end{equation}
A well-known theorem by S. S. Chern asserts  that the \emph{Ricci curvature form}
$$  \mathscr{R}_M:=-dd^c\log\det(g_{s\bar{t}})=\frac{\sqrt{-1}}{2\pi}\sum_{i,j=1}^mR_{i\bar{j}}dz_i\wedge d\bar{z}_j
$$
is a real and closed  (1,1)-form  which represents a cohomology class of  the \emph{de Rham} cohomology group $H^2_{{\rm{DR}}}(M,\mathbb R).$ This cohomology class  depends  only on the complex
structure of $M,$  is called the \emph{first Chern class} of $M.$ Let $s_M$ denote the \emph{Ricci scalar curvature} of $M$ defined by
$$s_M=\sum_{i,j=1}^mg^{i\bar j}R_{i\bar j},$$
where
$(g^{i\bar j})$ is the inverse of $(g_{i\bar j}).$ Invoking  (\ref{syy}), we obtain
$$
  s_M=-\frac{1}{4}\Delta_M\log\det(g_{s\bar t}).
$$
\begin{lemma}\label{s123}  Let $R_M$ be the
pointwise lower bound of Ricci curvature of $M$. Then we have
$$s_M\geq m R_M.$$
\end{lemma}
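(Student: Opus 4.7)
The plan is to prove this via simultaneous diagonalization of the two Hermitian forms at a fixed point $x \in M$: the Kähler metric $g_{i\bar j}$ (which is positive definite Hermitian) and the Ricci tensor $R_{i\bar j}$ (which is Hermitian). Since one form is positive definite, standard linear algebra gives a basis of $T_x M$ in which both are diagonal simultaneously.

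More precisely, I would first choose local holomorphic coordinates $(z_1, \ldots, z_m)$ near $x$ and pass to a unitary frame at $x$ so that $g_{i\bar j}(x) = \delta_{ij}$. Then, viewing $(R_{i\bar j}(x))$ as a Hermitian matrix in this frame, diagonalize it by a further unitary change of frame (which preserves $g_{i\bar j} = \delta_{ij}$) to obtain $R_{i\bar j}(x) = \lambda_i \delta_{ij}$ for some real eigenvalues $\lambda_1, \ldots, \lambda_m$. In this frame the formulas become
\begin{equation*}
s_M(x) = \sum_{i,j} g^{i\bar j} R_{i\bar j} = \sum_{i=1}^m \lambda_i,
\end{equation*}
while plugging the $i$-th basis vector $\xi = \partial/\partial z_i$ into the definition of $R_M$ gives
\begin{equation*}
R_M(x) \leq \frac{\mathrm{Ric}_M(\partial_{z_i}, \partial_{\bar z_i})}{\|\partial_{z_i}\|^2} = \frac{\lambda_i}{1} = \lambda_i.
\end{equation*}

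Summing this inequality over $i = 1, \ldots, m$ yields $m R_M(x) \leq \sum_i \lambda_i = s_M(x)$, which is the desired bound. Since $x$ was arbitrary, the pointwise inequality holds on all of $M$.

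I do not anticipate a serious obstacle here: the only substantive ingredient is the simultaneous diagonalization of two Hermitian forms, one positive definite, at a single point. No global or analytic issues arise because the statement is purely pointwise and the relevant tensors are smooth.
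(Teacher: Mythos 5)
Your proof is correct and essentially the same as the paper's: normalize $g_{i\bar j}(x)=\delta_{ij}$ and apply the infimum defining $R_M$ to each coordinate vector $\partial/\partial z_j$. The extra unitary diagonalization of $R_{i\bar j}$ is harmless but unnecessary — once $g_{i\bar j}(x)=\delta_{ij}$, the bound $\mathrm{Ric}_M(\partial_{z_j},\partial_{\bar z_j})\ge R_M(x)$ already holds for the (not necessarily diagonal) $R_{j\bar j}$, which is exactly what the paper uses.
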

\begin{proof} Fix any point $x\in M$, we  take a local holomorphic coordinate system $z$ around $x$ such that $g_{i\bar{j}}(x)=\delta^i_j.$
We get
\begin{eqnarray*}
% \nonumber to remove numbering (before each equatio
s_M(x)&=&\sum_{j=1}^mR_{j\bar{j}}(x) =\sum_{j=1}^m{\rm{Ric}}_M(\frac{\partial}{\partial z_j},\frac{\partial}{\partial \bar{z}_j})_x\geq mR_M(x)
\end{eqnarray*}
which proves the lemma.
\end{proof}
\section{First Main Theorem}

 We generalize the notions of Nevanlinna's functions to the general K\"ahler manifolds and show a First Main Theorem of meromorphic mapping defined on
  K\"ahler manifolds.
 Let $(M,g)$ be a  K\"ahler manifold of complex dimension $m,$  the associated  K\"ahler  form is defined by
 $$\alpha=\frac{\sqrt{-1}}{2\pi}\sum_{i,j=1}^mg_{i\bar{j}}dz_i\wedge d\bar{z}_j.$$
Fix $o\in M$ as a reference point. 
  Denote by $B_o(r)$ the geodesic ball centered at $o$ with radius $r,$ and by $S_o(r)$ the geodesic sphere centered at $o$ with radius $r.$
 By Sard's theorem, $S_o(r)$ is a submanifold of $M$ for almost all $r>0.$
Also, one denotes by  $g_r(o,x)$  the Green function of $\Delta_M/2$ for $B_o(r)$ with Dirichlet  boundary condition and a pole at $o,$ and by $d\pi_o^r(x)$ the harmonic measure on  $S_o(r)$ with respect to $o.$ 
\subsection{Nevanlinna's functions}~

Let $$f: M\rightarrow N$$
 be a meromorphic mapping to a compact complex manifold $N,$ which means that $f$ is defined by such a
 holomorphic mapping  $f_0:M\setminus I\rightarrow N,$ where $I$ is some analytic  subset of $M$ with $\dim_{\mathbb C}I\leq m-2,$ called the \emph{indeterminacy set} of $f$ such that
the closure  $\overline{G(f_0)}$ of the graph of  $f_0$  is an analytic subset of $M\times N$ and
the natural projection $\overline{G(f_0)}\rightarrow M$ is proper.

 Let a  (1,1)-form $\eta$  on $M,$ we use the following convenient  notation
 $$e_{\eta}(x)=2m\frac{\eta \wedge\alpha^{m-1}}{\alpha^m}.$$
For  an arbitrary  (1,1)-form $\omega$ on $N,$ we define the
  \emph{characteristic function} of $f$ with respect to $\omega$   by
  \begin{eqnarray*}
  % \nonumber to remove numbering (before each equatio
  T_f(r,\omega)&=&\frac{1}{2}\int_{B_o(r)}g_r(o,x)e_{f^*\omega}(x)dV(x) \\
  &=& \frac{\pi^m}{(m-1)!}\int_{B_o(r)}g_r(o,x)f^*\omega\wedge \alpha^{m-1},
  \end{eqnarray*}
  where $dV$ is the Riemannian volume measure on $M.$
Let a  holomorphic line bundle $L\rightarrow N.$ Equip $L$  with a Hermitian metric $h.$ Since $N$ is compact, 
  we well define $$T_f(r, L): =T_f\big{(}r, c_1(L, h)\big{)}$$
up to a bounded term.

In what follows, we define the \emph{proximity function} and \emph{counting function.}
\begin{lemma}\label{poo}
  $\Delta_M\log (h\circ f)$  is well  defined on $M\setminus I$ and
$$\Delta_M\log(h\circ f)=-4m\frac{f^*c_1(L,h)\wedge\alpha^{m-1}}{\alpha^m}.$$
Hence, we have $$e_{f^*c_1(L,h)}=-\frac{1}{2}\Delta_M\log(h\circ f).$$
\end{lemma}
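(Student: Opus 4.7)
The plan is to split the lemma into two steps: (a) a gluing argument verifying that $\Delta_M\log(h\circ f)$ is globally well-defined on $M\setminus I$, and (b) a pointwise computation on a trivializing chart using the standard K\"ahler identity relating $\Delta_M$ to the operator $dd^c\wedge\alpha^{m-1}/\alpha^m$. The Chern/Poincar\'e--Lelong identity $c_1(L,h)=-dd^c\log h$ will do the rest.

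For (a), fix a trivializing cover $\{U_\alpha\}$ of $L\to N$ on which $h$ is represented by positive smooth functions $h_\alpha$ with $h_\beta=|g_{\alpha\beta}|^2h_\alpha$, the $g_{\alpha\beta}$ being nowhere-vanishing holomorphic transition functions on $U_\alpha\cap U_\beta$. Pulling back by $f$, the function $g_{\alpha\beta}\circ f$ is a nowhere-vanishing holomorphic function on $f^{-1}(U_\alpha\cap U_\beta)\cap(M\setminus I)$, so $\log|g_{\alpha\beta}\circ f|^2$ is pluriharmonic and hence in the kernel of $\Delta_M$. Consequently $\Delta_M\log(h_\alpha\circ f)=\Delta_M\log(h_\beta\circ f)$ on the overlaps, and the local expressions glue to a well-defined function on $M\setminus I$, which we denote $\Delta_M\log(h\circ f)$.

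For (b), the central input is the pointwise K\"ahler identity
$$\Delta_M u\cdot\alpha^m=4m\,dd^c u\wedge\alpha^{m-1}$$
for smooth $u$, obtained by a one-point computation after diagonalizing $g_{i\overline{j}}$ and invoking the specific normalizations $\alpha=\frac{\sqrt{-1}}{2\pi}\sum g_{i\overline{j}}dz_i\wedge d\overline{z}_j$ and $dd^c=\frac{\sqrt{-1}}{2\pi}\partial\overline{\partial}$ from the paper. Applying this to $u=\log(h_\alpha\circ f)$ on a trivializing patch and using naturality of $dd^c$ under the holomorphic map $f$, namely $dd^c\log(h_\alpha\circ f)=f^*dd^c\log h_\alpha=-f^*c_1(L,h)$, yields
$$\Delta_M\log(h\circ f)=-4m\,\frac{f^*c_1(L,h)\wedge\alpha^{m-1}}{\alpha^m}.$$
The final identity $e_{f^*c_1(L,h)}=-\tfrac{1}{2}\Delta_M\log(h\circ f)$ is then immediate from the definition $e_\eta=2m\,\eta\wedge\alpha^{m-1}/\alpha^m$.

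Conceptually the argument is routine; the only mild subtlety is tracking the constant in the K\"ahler identity, as the $2\pi$ factors embedded in $\alpha$ and $d^c$ are easy to misplace. Apart from that, the proof is just bookkeeping: gluing via pluriharmonicity of $\log|g_{\alpha\beta}\circ f|^2$, plus one exterior-algebra computation at a single point.
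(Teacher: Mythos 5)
Your proposal is correct and follows essentially the same route as the paper: the gluing step via pluriharmonicity of $\log|g_{\alpha\beta}\circ f|^2$ is identical, and your ``pointwise K\"ahler identity'' $\Delta_M u\cdot\alpha^m=4m\,dd^c u\wedge\alpha^{m-1}$ (with the correct constant $4m$ under the paper's normalizations) is exactly what the paper verifies by hand in normal holomorphic coordinates at a point. The only stylistic difference is that you package the one-point computation as a named identity and then invoke naturality of $dd^c$ under $f$, whereas the paper spells out the trace computation explicitly; both are the same argument.
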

\begin{proof} Let  $(\{U_\alpha\},\{e_\alpha\})$ be  a local trivialization covering of $(L,h)$ with transition function system $\{g_{\alpha\beta}\}.$ On $U_\alpha\cap U_\beta,$ 
 $$ e_\beta=g_{\alpha\beta}e_\alpha, \ \ h_\alpha=h|_{U_\alpha}=\|e_\alpha\|^2, \ \  h_\beta=h|_{U_\beta}=\|e_\beta\|^2.$$
Thus, we get 
$$\Delta_M\log (h_\beta\circ f)=\Delta_M\log(h_\alpha\circ f)+\Delta_M\log |g_{\alpha\beta}\circ f|^2$$
on  $f^{-1}(U_\alpha\cap U_\beta)\setminus I.$ Notice that $g_{\alpha\beta}$ is holomorphic and nowhere vanishing on
 $U_\alpha\cap U_\beta$, we see that $\log |g_{\alpha\beta}\circ f|^2$ is harmonic on  $f^{-1}(U_\alpha\cap U_\beta)\setminus I.$ So,
$\Delta_M\log (h_\beta\circ f)=\Delta_M\log(h_\alpha\circ f)$ on  $f^{-1}(U_\alpha\cap U_\beta)\setminus I.$ Thus $\Delta_M\log (h\circ f)$  is well  defined on $M\setminus I.$ 
Fix $x\in M,$
we choose a normal holomorphic coordinate system $z$ near $x$ in the sense that  $g_{i\bar{j}}(x)=\delta_{j}^i$ and all the  first-order derivative of $g_{i\overline{j}}$ vanish at $x.$ Then at $x,$ we have
\begin{equation}\label{bbb}
  \Delta_M=4\sum_j\frac{\partial^2}{\partial z_j\partial\bar{z}_j}, \ \ \ \alpha^m=m!\bigwedge_{j=1}^m\frac{\sqrt{-1}}{2\pi}dz_j\wedge d\bar{z}_j
\end{equation}
and 
$$f^*c_1(L,h)\wedge\alpha^{m-1}=-(m-1)!{\rm{tr}}\left(\frac{\partial^2\log(h\circ f)}{\partial z_i\partial\bar{z}_j}\right)\bigwedge_{j=1}^m\frac{\sqrt{-1}}{2\pi}dz_j\wedge d\bar{z}_j,$$
where ``tr" stands for the trace of a square matrix. Indeed, by (\ref{bbb}) 
$$\Delta_M\log(h\circ f)=4{\rm{tr}}\left(\frac{\partial^2\log(h\circ f)}{\partial z_i\partial\bar{z}_j}\right)$$
 at $x.$  This proves the lemma.
\end{proof}
Let $s\in H^0(N, L)$ which is not equal to 0. Locally, we  write $s=\tilde s e,$ where $e$ is a local holomorphic frame of $L.$ Then 
$$
  \Delta_M\log \|s\circ f\|^2=\Delta_M\log(h\circ f)+\Delta_M\log|\tilde{s}\circ f|^2.
$$
 By  the similar arguments as in the proof of Lemma \ref{poo},
 we get
$$  \Delta_M\log|\tilde{s}\circ f|^2=4m\frac{dd^c\log|\tilde{s}\circ f|^2\wedge\alpha^{m-1}}{\alpha^m}.
$$
 \begin{lemma}\label{inte}  Let $s\in H^0(N,L)$ with $(s)=D.$  If $(L,h)\geq0,$ then

  {\rm{(i)}} $\log\|s\circ f\|^2$ is locally the difference of two plurisubharmonic functions, and hence $\log\|s\circ f\|^2\in \mathscr{L}_{loc}(M).$

  {\rm{(ii)}} $dd^c\log\|s\circ f\|^2=f^*D-f^*c_1(L,h)$ in the sense of currents.
\end{lemma}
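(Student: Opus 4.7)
The plan is to reduce both parts to local computations via a trivialization of $L$, combine the ordinary Poincar\'e--Lelong formula (already recorded in the excerpt) with the smooth curvature identity, and then extend the result across the indeterminacy set $I$ using that $I$ has complex codimension at least two.

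First, fix a local trivialization $(U_\alpha, e_\alpha)$ of $L$ with $h_\alpha = \|e_\alpha\|_h^2$, and write $s|_{U_\alpha} = \tilde s_\alpha\, e_\alpha$ with $\tilde s_\alpha$ holomorphic on $U_\alpha$. On $f^{-1}(U_\alpha)\setminus I$,
\[
\log\|s\circ f\|^2 \;=\; \log|\tilde s_\alpha\circ f|^2 \;-\; \bigl(-\log(h_\alpha\circ f)\bigr).
\]
The first summand is plurisubharmonic on $f^{-1}(U_\alpha)\setminus I$ because $\tilde s_\alpha\circ f$ is holomorphic there. The hypothesis $(L,h)\ge 0$ means $c_1(L,h)=-dd^c\log h_\alpha\ge 0$, so $-\log h_\alpha$ is smooth and plurisubharmonic on $U_\alpha$; since $f$ is holomorphic on $M\setminus I$ and the pullback of a psh function by a holomorphic map is psh, $-\log(h_\alpha\circ f)$ is psh on $f^{-1}(U_\alpha)\setminus I$. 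Because $\dim_{\mathbb C}I\le m-2$, each of these psh summands extends uniquely to a psh function on $f^{-1}(U_\alpha)$ by the standard extension theorem for psh functions across thin analytic subsets. This displays $\log\|s\circ f\|^2$ locally as the difference of two psh functions; local integrability in (i) follows because any psh function $\not\equiv-\infty$ lies in $\mathscr L^1_{\mathrm{loc}}$.

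For (ii), I would apply the Poincar\'e--Lelong formula on the excerpt to the holomorphic function $\tilde s_\alpha\circ f$ on $f^{-1}(U_\alpha)\setminus I$, giving $dd^c\log|\tilde s_\alpha\circ f|^2 = (\tilde s_\alpha\circ f)_0$ as currents, where the right-hand side is the restriction of $f^*D$ to $f^{-1}(U_\alpha)$. The smooth term contributes $dd^c\log(h_\alpha\circ f)=-f^*c_1(L,h)$ by Lemma \ref{poo} (or direct pullback). Summing, the identity
\[
dd^c\log\|s\circ f\|^2 \;=\; f^*D - f^*c_1(L,h)
\]
holds in the sense of currents on $M\setminus I$. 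To upgrade this to all of $M$, note that the right-hand side is the restriction of a globally defined current on $M$ (the divisor $f^*D$ and the smooth form $f^*c_1(L,h)$ extend uniquely across a codim-$\ge 2$ set), while the left-hand side is $dd^c$ of a function already shown to be in $\mathscr L^1_{\mathrm{loc}}(M)$ by (i), so it is a well-defined current on $M$ and agrees on the dense open $M\setminus I$ with its extension.

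The main obstacle is the bookkeeping around $I$: strictly speaking, $s\circ f$ and $\log\|s\circ f\|^2$ are only defined on $M\setminus I$, so one must justify each identity first on $M\setminus I$ and then invoke the extension theorem for plurisubharmonic functions (equivalently, for positive $(1,1)$-currents) across the codimension-$\ge 2$ analytic set $I$. Once this extension step is in hand, (ii) is nothing more than the pullback of the Poincar\'e--Lelong formula on $N$.
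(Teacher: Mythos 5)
Your proof is correct and takes essentially the same route as the paper: both decompose $\log\|s\circ f\|^2$ locally as the difference of the plurisubharmonic functions $\log|\widetilde{s}\circ f|^2$ and $-\log(h\circ f)$ (the latter psh since $c_1(L,h)\ge 0$), then apply the Poincar\'e--Lelong formula to the holomorphic factor to obtain (ii). Your extra care in first working on $M\setminus I$ and then extending across the codimension-$\ge 2$ indeterminacy set is a valid refinement of a step the paper leaves implicit.
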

\begin{proof} Locally, we can write $s=\tilde se,$ where $e$ is a local holomorphic frame of $L$ with $h=\|e\|^2.$
Then 
$$\log\|s\circ f\|^{2}=\log|\tilde{s}\circ f|^{2}+\log (h\circ f).$$ 
Since  $c_1(L,h)\geq0,$ one obtains  $-dd^c\log (h\circ f)\geq0.$ Indeed, $\tilde{s}$ is holomorphic,   hence  $dd^c\log|\tilde{s}\circ f|^{2}\geq0.$
 This follows (i).  Poincar\'e-Lelong formula implies that
$dd^c\log|\tilde{s}\circ f|^2=f^*D$
in the sense of currents,  hence (ii) holds.
\end{proof}
Assume that $L\geq0.$ 
The \emph{proximity function} of $f$ with respect to $D\in|L|$ is  defined by
$$  m_f(r,D)=\int_{S_o(r)}\log\frac{1}{\|s_D\circ f(x)\|}d\pi_o^r(x).
$$
Write
$$\log\|s_D\circ f\|^{-2}=\log (h\circ f)^{-1}-\log|\tilde{s}_{D}\circ f|^{2}$$
as  the difference of two plurisubharmonic functions.
Then it defines a Riesz charge $d\mu=d\mu_1-d\mu_2,$ here $d\mu_2$  gives a Riesz measure for $f^*D.$  The \emph{counting function} of $f$ with respect to $D$ is  defined by
\begin{eqnarray*}
% \nonumber to remove numbering (before each equation)
   N_f(r,D)&=& \frac{1}{4}\int_{B_o(r)}g_r(o,x)\Delta_M\log|\tilde{s}_D\circ f(x)|^2dV(x) \\
   &=&\frac{\pi^m}{(m-1)!}\int_{B_o(r)}g_r(o,x)dd^c\log|\tilde{s}_D\circ f|^2\wedge\alpha^{m-1} \\
&=&\frac{\pi^m}{(m-1)!}\int_{f^*D\cap B_o(r)}g_r(o,x)\alpha^{m-1}.
\end{eqnarray*}
Similarly, one can define $N(r,{\rm{Supp}}f^*D).$ We write  $\overline{N}_f(r,D)=N(r, {\rm{Supp}}f^*D)$ in short.
\subsection{Probabilistic expressions of  Nevanlinna's functions}~

We  reformulate  Nevanlinna's functions in terms of Brownian motion $X_t$. Let  $I$ be the indeterminacy set of $f.$
Set the stopping time $$\tau_r=\inf\big{\{}t>0: X_t\not\in B_o(r)\big{\}}.$$
Put $\omega:=-dd^c\log h.$  By co-area formula, we have 
$$T_f(r,L) =\frac{1}{2}\mathbb E_o\left[\int_0^{\tau_r}e_{f^*\omega}(X_t)dt\right].$$
The relation between  harmonic measures and hitting times gives that
$$  m_f(r,D) =\mathbb E_o\left[\log\frac{1}{\|s_D\circ f(X_{\tau_r})\|}\right].$$
To counting function $N_f(r,D),$ we use an alternative probabilistic expression (see \cite{at,atsuji,carne})  as follows
\begin{equation}\label{prob}
  N_f(r,D)=\lim_{\lambda\rightarrow\infty}\lambda\mathbb P_o\left(\sup_{0\leq t\leq\tau_r}\log\frac{1}{\|s_D\circ f(X_t)\|}>\lambda\right).
\end{equation}
Following  the arguments in \cite{worthy} related to the local martingales, we see that the  above limit exists.
By Dynkin formula  and co-area formula, it equals
\begin{eqnarray*}
% \nonumber to remove numbering (before each equation)
  &&  \lim_{\lambda\rightarrow\infty}\lambda\mathbb P_o\left(\sup_{0\leq t\leq\tau_r}\log\frac{1}{\|s_D\circ f(X_t)\|}>\lambda\right) \\
  &=& -\frac{1}{2}\mathbb E_o\left[\int_0^{\tau_r}\Delta_M\log\frac{1}{|\tilde{s}_D\circ f(X_t)|}dt\right] \\
    &=& \frac{1}{4}\int_{B_o(r)}g_r(o,x)\Delta_M\log|\tilde{s}_D\circ f(x)|^2dV(x) \\
  &=& N_f(r,D).
\end{eqnarray*}
\begin{remark}\label{remark} The definitions of Nevanlinna's functions in above are  natural extensions of the classical ones. To see that, we recall the  $\mathbb C^m$-case:
\begin{eqnarray*}
% \nonumber to remove numbering (before each equation)
   T_f(r,L)&=&\int_0^r\frac{dt}{t^{2m-1}}\int_{B_o(t)}f^*c_1(L,h)\wedge\alpha^{m-1}, \\
  m_f(r,D)&=&\int_{S_o(r)}\log\frac{1}{\|s_D\circ f\|}\gamma, \\
 N_f(r,D)&=&\int_0^r\frac{dt}{t^{2m-1}}\int_{B_o(t)}dd^c\log|\tilde{s}_D\circ f|^2\wedge\alpha^{m-1},
\end{eqnarray*}
where $o=(0,\cdots,0)$ and
$$\alpha=dd^c\|z\|^2,\ \ \ \gamma=d^c\log\|z\|^2\wedge \left(dd^c\log\|z\|^2\right)^{m-1}.$$
Notice  the facts that
$$\gamma=d\pi_o^r(z),  \ \ \ g_r(o,z)=\left\{
                \begin{array}{ll}
                  \frac{\|z\|^{2-2m}-r^{2-2m}}{(m-1)\omega_{2m-1}}, & m\geq2; \\
                  \frac{1}{\pi}\log\frac{r}{|z|}, & m=1.
                \end{array}
              \right.,$$
where $\omega_{2m-1}$ is the volume of  unit sphere in $\mathbb R^{2m}.$ Apply integration by part, we see  it coincides with  ours.
\end{remark}
\subsection{First Main Theorem}~

 Let $N$ be a complex projective algebraic manifold. There is  a very ample holomorphic line bundle $L'\rightarrow V.$
 Equip $L'$ with a Hermitian metric $h'$ such that  $\omega':=-dd^c\log h'>0.$ 
 For an arbitrary  holomorphic line bundle $L\rightarrow N$ equipped with a Hermitian metric $h,$
   whose  Chern form says $\omega:=-dd^c\log h,$ we can pick  $k\in \mathbb N$ large enough so that
$\omega+k\omega'>0.$
Take the natural product Hermitian metric $\|\cdot\|$ on $L\otimes L'^{\otimes k},$ then the Chern form is $\omega+k\omega'$. 
Choose $\sigma\in H^0(M,L')$ such that $f(M)\not\subset{\rm{Supp}}(\sigma).$ 
Due to $\omega+k\omega'>0$ and $\omega'>0,$  we see that $\log\|(s_D\otimes \sigma^k)\circ f\|^2$ and $\log\|\sigma\circ f\|^2$ are
locally the difference of two plurisubharmonic functions, where $D\in|L|$. Thus,
$$\log\|s_D\circ f\|^2=\log\|(s_D\otimes \sigma^k)\circ f\|^2-k\log\|\sigma\circ f\|^2$$
 is locally the difference of two plurisubharmonic functions. Namely,  $m_f(r,D)$ can be  defined. 

We have the First Main Theorem (FMT):
\begin{theorem}[FMT]\label{first2}  We have
$$T_f(r,L)=m_f(r,D)+N_f(r,D)+O(1).$$
\end{theorem}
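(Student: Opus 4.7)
The plan is to apply Dynkin's formula to $u:=\log\|s_D\circ f\|^{2}$ and to identify each resulting piece with a multiple of $T_f$, $N_f$, or $m_f$ using the probabilistic representations assembled in Section~3.2. The driving identity is the local Poincar\'e--Lelong splitting
$$
u \;=\; \log|\widetilde s_D\circ f|^{2} + \log(h\circ f),
$$
which separates the divisorial singularities (yielding $N_f$) from the curvature (yielding $T_f$), while the boundary expectation $\mathbb{E}_o[u(X_{\tau_r})]$ will produce $-2m_f(r,D)$.

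Concretely, I would first make sure $u$ is locally a difference of two plurisubharmonic functions so that the extended Dynkin formula of Section~2.2B applies. When $L\geq 0$ this is Lemma~\ref{inte}(i); for general $L$ the factorization
$$u \;=\; \log\|(s_D\otimes\sigma^k)\circ f\|^{2} - k\log\|\sigma\circ f\|^{2}$$
displayed just before the theorem achieves this via the very ample twist $L'$. Choosing $o\notin\operatorname{Supp}(f^*D)\cup I$, so that $u(o)$ is a finite constant, Dynkin's formula with stopping time $\tau_r$ reads
$$
\mathbb{E}_o\!\left[u(X_{\tau_r})\right] - u(o) \;=\; \tfrac{1}{2}\,\mathbb{E}_o\!\left[\int_0^{\tau_r}\Delta_M u(X_t)\,dt\right].
$$
Now Lemma~\ref{poo} gives $\Delta_M\log(h\circ f)=-2\,e_{f^*c_1(L,h)}$, so the co-area formula together with the definitions of $T_f(r,L)$ and $N_f(r,D)$ evaluate the right-hand side as $2N_f(r,D)-2T_f(r,L)$, while the harmonic-measure identity~(\ref{hello}) gives $\mathbb{E}_o[u(X_{\tau_r})] = -2m_f(r,D)$. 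Rearranging and absorbing $u(o)$ into $O(1)$ yields $T_f(r,L) = m_f(r,D)+N_f(r,D)+O(1)$.

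The main obstacle is to rigorously justify Dynkin's formula for $u$ across its pluripolar singularities on $f^*D\cup I$. The paper asserts this extension in Section~2.2B, but to implement it I would truncate $u$ via $u_n:=\max(u,-n)$ (suitably smoothed), apply the formula to each $u_n\in\mathscr{C}_\flat^{2}(M)$, and pass to the limit using monotone convergence for the boundary term, local integrability of plurisubharmonic functions, and the identification of the distributional Laplacian of $\log|\widetilde s_D\circ f|^{2}$ with the Riesz measure representing $f^*D$. The curvature integrand $-2e_{f^*\omega}$ is smooth off the analytic set $I$, which is pluripolar, so it contributes no singular mass; this closes the argument.
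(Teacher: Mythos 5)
Your computation is correct and reaches the same identity via the same underlying engine (Dynkin's formula applied to $\log\|s_D\circ f\|^2$ up to $\tau_r$), but you handle the singularities along $f^*D\cup I$ differently than the paper does, and that is where the two arguments genuinely diverge. The paper introduces a family of stopping times $T_{\lambda,r}$ that freeze the process before $\log\|s_D\circ f\|^{-1}$ exceeds $\lambda$, applies the ordinary Dynkin formula on the stopped interval $[0,\tau_r\wedge T_{\lambda,r}]$ where the relevant function is smooth, and then sends $\lambda\to\infty$: the part of the boundary expectation on $\{\tau_r<T_{\lambda,r}\}$ tends to $m_f(r,D)$, while the part on $\{T_{\lambda,r}\le\tau_r\}$ equals $\lambda\,\mathbb{P}_o(\sup\cdots>\lambda)$ and is identified with $N_f(r,D)$ through the probabilistic representation~(\ref{prob}). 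Your route instead invokes the distributional extension of Dynkin's formula directly, splits $\Delta_M u$ by Poincar\'e--Lelong into the Riesz measure of $f^*D$ (giving $2N_f$) and the smooth curvature piece $\Delta_M\log(h\circ f)=-2\,e_{f^*c_1(L,h)}$ (giving $-2T_f$), and reads off the conclusion at once. Your approach is shorter and tracks the Riesz-measure definition of $N_f$ more transparently; the paper's stopping-time argument stays within classical It\^o/Dynkin calculus and yields~(\ref{prob}) as a byproduct, which it then reuses. One caveat on your proposed justification: the truncation $u_n=\max(u,-n)$ of a $\delta$-plurisubharmonic function acquires, in its distributional Laplacian, a singular contribution supported on the level set $\{u=-n\}$ which must be shown to vanish in the limit; the stopping-time device sidesteps this entirely, which is likely why the paper chose it. Since the paper itself asserts the distributional Dynkin formula for pluripolar singularities in Section~2.2B, your direct application of it is consistent with the framework, and the proof stands as a valid alternative.
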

\begin{proof}  
Set
$$T_{\lambda,r}=\inf\Big{\{}t>0: \sup_{s\in[0,t]\setminus T_{I,r}}\log\frac{1}{\|s_D\circ f(X_s)\|}>\lambda\Big{\}},$$
where $T_{I,r}=\{0\leq t\leq\tau_r: X_t\in I\}$ and $I$ is the indeterminacy set of $f.$
Due to  the definition of $T_{\lambda,r},$  $X_t$ does not pass through  $f^*D$ as well as  those points in $I$ near which $\log\|s_D\circ f(X_t)\|^{-1}$ is unbounded when  $0\leq t\leq \tau_r\wedge T_{\lambda,r}.$ By Dynkin formula, it follows that
\begin{eqnarray}\label{cvc}
% \nonumber to remove numbering (before each equation)
   && \mathbb E_o\left[\log\frac{1}{\|s_D\circ f(X_{\tau_r\wedge T_{\lambda,r}})\|}\right]  \\
&=& \frac{1}{2}\mathbb E_o\left[\int_0^{\tau_r\wedge T_{\lambda,r}}\Delta_M
\log\frac{1}{\|s_D\circ f(X_{t})\|}dt\right]+\log\frac{1}{\|s_D\circ f(o)\|} \nonumber,
\end{eqnarray}
where $\tau_r\wedge T_{\lambda,r}=\min\{\tau_r, T_{\lambda,r}\}.$ Note that $\Delta_M\log|\tilde{s}_D\circ f|=0$ outside $I\cup f^*D,$  we see that
$$\Delta_M
\log\frac{1}{\|s_D\circ f(X_{t})\|}=-\frac{1}{2}\Delta_M\log h\circ f(X_{t})$$
for $t\in [0,T_{\lambda,r}].$
Thus, (\ref{cvc}) turns to
$$ \mathbb E_o\left[\log\frac{1}{\|s_D\circ f(X_{\tau_r\wedge T_{\lambda,r}})\|}\right] \\=
  -\frac{1}{4}\mathbb E_o\left[\int_0^{\tau_r\wedge T_{\lambda,r}}\Delta_M\log h\circ f(X_{t})dt\right]
  +O(1).
$$
The monotone convergence theorem leads to
\begin{equation*}
% \nonumber to remove numbering (before each equation)
 \frac{1}{4}\mathbb E_o\left[\int_0^{\tau_r\wedge T_{\lambda,r}}\Delta_M\log h\circ f(X_{t})dt\right]
    \rightarrow\frac{1}{2}\mathbb E_o\left[\int_0^{\tau_r}e_{f^*\omega}(X_t)dt\right]
    =T_f(r,L)
\end{equation*}
as $\lambda\rightarrow\infty,$ due to  $T_{\lambda,r}\rightarrow\infty$ $a.s.$ as $\lambda\rightarrow\infty.$
We handle the first term  in (\ref{cvc}),  write it as two parts:
\begin{eqnarray*}
% \nonumber to remove numbering (before each equation)
   \mathrm{I}+\mathrm{II}&=& \mathbb E_o\left[\log\frac{1}{\|s_D\circ f(X_{\tau_r})\|}: \tau_r<T_{\lambda,r}\right]
    +
\mathbb E_o\left[\log\frac{1}{\|s_D\circ f(X_{T_{\lambda,r}})\|}: T_{\lambda,r}\leq\tau_r \right].
\end{eqnarray*}
By the monotone convergence theorem again, 
\begin{equation*}
  \mathrm{I}\rightarrow \mathbb E_o\left[\log\frac{1}{\|s_D\circ f(X_{\tau_r})\|}\right]= m_f(r,D)
\end{equation*}
as $\lambda\rightarrow\infty.$
Finally, we deal with  $\mathrm{II}.$ By the definition of $T_{\lambda, r},$ we see that
\begin{eqnarray*}
  \mathrm{II}&=&
 \lambda\mathbb P_o\left(\sup_{t\in[0,\tau_r]\setminus T_{I,r}}\log\frac{1}{\|s_D\circ f(X_t)\|}>\lambda\right)\rightarrow N_f(r,D)
\end{eqnarray*}
as $\lambda\rightarrow\infty.$ Put  the above together,
we show the theorem.
\end{proof}

\begin{cor}[Nevanlinna inequality]\label{bzda} We have
$$N_f(r,D)\leq T_f(r,L)+O(1).$$
\end{cor}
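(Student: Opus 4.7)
The plan is to deduce the Nevanlinna inequality directly from the First Main Theorem proved just above, which states
$$T_f(r,L) = m_f(r,D) + N_f(r,D) + O(1).$$
Rearranging gives $N_f(r,D) = T_f(r,L) - m_f(r,D) + O(1)$, so the entire task reduces to showing that $m_f(r,D)$ is bounded below by a constant independent of $r$.

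To see this, recall that
$$m_f(r,D) = \int_{S_o(r)} \log \frac{1}{\|s_D \circ f(x)\|} \, d\pi_o^r(x),$$
where $s_D$ is the canonical section of $L_D \to N$. Since $N$ is compact (in the paper's setup the target of the FMT is a compact complex manifold) and $h$ is a smooth Hermitian metric on $L$, the function $\|s_D\|_h$ is bounded on $N$. By rescaling the metric $h$ if necessary (this only affects $T_f(r,L)$ by a bounded term, which is already absorbed into the $O(1)$), we may assume $\|s_D\|_h \leq 1$ everywhere on $N$, so that $\log \|s_D \circ f(x)\|^{-1} \geq 0$ pointwise on $M \setminus (I \cup \mathrm{Supp}(f^*D))$. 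Since $d\pi_o^r$ is a probability measure on $S_o(r)$ (it is the hitting distribution of Brownian motion started at $o$), integration preserves this inequality and yields $m_f(r,D) \geq 0$.

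Combining the two observations, $N_f(r,D) \leq T_f(r,L) + O(1)$, which is the desired inequality. There is essentially no obstacle here; the only small point to be careful about is the normalization of the Hermitian metric so that $\|s_D\|_h \leq 1$, together with the observation that this rescaling is absorbed into the $O(1)$ error term already present in the FMT. No additional stochastic or curvature input is needed beyond what was used in the proof of the First Main Theorem.
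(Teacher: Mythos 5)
Your argument is correct and is essentially the same one the paper leaves implicit: the corollary follows immediately from the First Main Theorem once one observes that $m_f(r,D)\ge 0$ after normalizing the Hermitian metric so that $\|s_D\|\le 1$, a rescaling that only perturbs $T_f(r,L)$ by a bounded amount. The paper states the corollary without proof precisely because this standard deduction is routine.
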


\section{Logarithmic Derivative Lemma}
The goals of this section are to prove the  Logarithmic Derivative Lemma  for K\"ahler manifolds (i.e., Theorem \ref{log1}) and provide an estimate of $C(o,r,\delta)$ when the K\"ahler manifolds are non-positively curve.
The Logarithmic Derivative Lemma plays an useful role in derivation of the Second Main Theorem in  Section 5.

\subsection{Logarithmic Derivative Lemma}~

 Let $(M,g)$ be a $m$-dimensional complete K\"ahler manifold,  and  $\nabla_M$  be the gradient operator on $M$  associated with  $g.$
 Let $X_t$ be the Brownian motion in $M$ with generator $\Delta_M/2.$

We first prepare some lemmas:
\begin{lemma}[Calculus Lemma, \cite{at}]\label{calculus} Let $k\geq0$ be a
locally integrable function  on $M$ such that it is locally bounded at $o\in M.$
Then for any $\delta>0,$ there exists a function  $C(o, r, \delta)>0$  independent of $k$ and a set
$E_{\delta}\subset [0, \infty)$ of finite Lebesgue measure such that
\begin{equation}\label{C}
  \mathbb E_o\big{[}k(X_{\tau_r})\big{]}\leq C(o, r, \delta)
\left(\mathbb E_o\left[\int_0^{\tau_r} k(X_t)dt\right]\right)^{(1+\delta)^2}
\end{equation}
holds for $r>1$  outside $E_{\delta}.$
\end{lemma}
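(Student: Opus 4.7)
The plan is to combine the probabilistic identities from Section~2.2 with the classical Borel calculus lemma of Nevanlinna theory, applied twice, to produce the exponent $(1+\delta)^2$. By the co-area formula (\ref{coa}) and the hitting-time identity (\ref{hello}),
$$F(r) := \mathbb E_o\bigg[\int_0^{\tau_r} k(X_t)\,dt\bigg] = \int_{B_o(r)} g_r(o,x)\,k(x)\, dV(x), \qquad \mathbb E_o[k(X_{\tau_r})] = \int_{S_o(r)} k(x)\, d\pi_o^r(x).$$
Since $k\geq 0$, the function $F$ is positive and nondecreasing in $r$, which is the essential hypothesis for the Borel machinery. The goal is to bound the surface integral on the right by a power of $F$, up to a constant depending only on $o,r,\delta$.

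I would next relate the surface integral against harmonic measure to the $r$-derivative of a Green-type volume integral. The density $d\pi_o^r/dA$ is, up to a normalization, the inward normal derivative of $g_r$ on $S_o(r)$, so expanding $g_{r+\epsilon}(o,\cdot)-g_r(o,\cdot)$ to first order in $\epsilon$ produces an inequality of the shape $\mathbb E_o[k(X_{\tau_r})] \leq c_1(o,r)\,\widetilde F\,'(r)$ with $\widetilde F(r) \leq c_2(o,r)\,F(r)$. The constants $c_1,c_2$ reflect the volume of $B_o(r)$, the Jacobian of $\exp_o$, and the boundary behavior of $g_r$; crucially they are $k$-independent and will be absorbed into $C(o,r,\delta)$.

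The Borel lemma says that, for a positive nondecreasing differentiable function $\phi$ on $[1,\infty)$, $\phi'(r)\leq\phi(r)^{1+\delta}$ outside a Lebesgue-measurable set of finite measure, a consequence of the elementary bound $\int \phi'/\phi^{1+\delta}<\infty$. Applying this first to $\widetilde F$ itself and then to an antiderivative of $\widetilde F$ (or, equivalently, to two suitably chained monotone functions), one accumulates $(1+\delta)^2$ in the exponent, yielding $\widetilde F\,'(r) \leq F(r)^{(1+\delta)^2}$ outside the union of the two exceptional sets, which is again of finite measure. Combined with the step-two comparison, this proves the lemma; the remaining geometric constants are swept into $C(o,r,\delta)$.

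The main obstacle is the geometric comparison step: the identification of harmonic measure with $\partial g_r/\partial\nu$ is pointwise, whereas the required estimate is an $L^1$ inequality against the arbitrary $k$, so a naive H\"older application would lose uniformity in $k$. A clean route is to regularize $k$ by convolution, apply Dynkin's formula to a smoothly truncated antiderivative in order to avoid differentiating $g_r$ directly, and then pass to the limit, absorbing the non-monotone boundary terms into the ``outside $E_\delta$'' clause. The dependence of $C(o,r,\delta)$ on the Riemannian geometry of $M$ near $B_o(r)$ is unavoidable and arises precisely here; on $\mathbb C^m$ it collapses because $g_r$ and $d\pi_o^r$ are explicit radial quantities.
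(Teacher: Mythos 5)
The statement you are asked to prove is not actually proved in the paper; it is cited verbatim from Atsuji's paper \cite{at}. The closest in-paper analogue with a proof is Lemma \ref{calculusII}, which is the refined version for simply-connected, non-positively curved $M$ that also extracts an explicit formula for $C(o,r,\delta)$. So the right comparison is between your proposal and that proof.

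Your overall spine is correct and matches the paper's: express $F(r)=\mathbb E_o[\int_0^{\tau_r}k(X_t)\,dt]$ and $\mathbb E_o[k(X_{\tau_r})]$ via the co-area formula and the harmonic-measure identity, then aim to bound the surface integral by an $r$-derivative of a suitable monotone quantity controlled by $F$, and apply Borel's lemma twice to produce the exponent $(1+\delta)^2$. The paper does exactly this in Lemma \ref{calculusII}.

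Where you fall short is precisely the step you flag as the ``main obstacle.'' In the paper this is not handled by regularizing $k$ and invoking Dynkin on a truncated antiderivative; it is handled by introducing the explicit auxiliary function
$$\Gamma(r)=\int_0^r dt\int_t^r G^{1-2m}(s)\,ds\int_{S_o(t)}k(x)\,d\sigma_t(x),$$
where $G$ solves the comparison ODE (\ref{G}). One computes $\Gamma'(r)=G^{1-2m}(r)\int_0^r dt\int_{S_o(t)}k\,d\sigma_t$, so $\Gamma'/G^{1-2m}$ is nondecreasing and its $r$-derivative is $\int_{S_o(r)}k\,d\sigma_r$; Lemma \ref{sing}(ii) then bounds $\mathbb E_o[k(X_{\tau_r})]$ by a constant times this derivative, and Lemma \ref{zz} bounds $\Gamma(r)\lesssim (\log r) F(r)$. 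Borel is applied to the pair $\big(\Gamma'/G^{1-2m},\,\Gamma\big)$, not to $F$ and an antiderivative of $F$. Your proposed pair ``$\widetilde F$ and an antiderivative of $\widetilde F$'' is never defined, and it is not clear that the derivative of your $\widetilde F$ dominates the harmonic-measure integral with constants independent of $k$; the H\"older uniformity problem you raise is real, and the convolution/Dynkin route you gesture at would need to be carried out in detail. In short: your proposal correctly anticipates the double-Borel mechanism and the need for a Green-function/harmonic-measure comparison, but the actual construction of the two chained monotone functions — the technical heart of the argument — is left as a sketch, whereas the paper (and presumably Atsuji's original proof) pins it down with specific Green-function estimates and a carefully weighted auxiliary integral.
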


Let $\psi$ be a meromorphic function on $M.$
The norm of the gradient of $\psi$ is defined by
$$\|\nabla_M\psi\|^2=\sum_{i,j}g^{i\overline j}\frac{\partial\psi}{\partial z_i}\overline{\frac{\partial \psi}{\partial  z_j}},$$
where $(g^{i\overline{j}})$ is the inverse of $(g_{i\overline{j}}).$
Locally, we write $\psi=\psi_1/\psi_0,$ where $\psi_0,\psi_1$ are holomorphic functions so that ${\rm{codim}}_{\mathbb C}(\psi_0=\psi_1=0)\geq2$ if $\dim_{\mathbb C}M\geq2.$ Identify $\psi$  with a meromorphic mapping into $\mathbb P^1(\mathbb C)$  by
$x\mapsto[\psi_0(x):\psi_1(x)].$ The characteristic function of $\psi$ with respect to the Fubini-Study form $\omega_{FS}$ on  $\mathbb P^1(\mathbb C)$  is defined by
$$T_\psi(r,\omega_{FS})=\frac{1}{4}\int_{B_o(r)}g_r(o,x)\Delta_M\log(|\psi_0(x)|^2+|\psi_1(x)|^2)dV(x).$$
Let
  $i:\mathbb C\hookrightarrow\mathbb P^1(\mathbb C)$ be an inclusion defined by
 $z\mapsto[1:z].$  Via the pull-back by $i,$ we have a (1,1)-form $i^*\omega_{FS}=dd^c\log(1+|\zeta|^2)$ on $\mathbb C,$
 where $\zeta:=w_1/w_0$ and $[w_0:w_1]$ is
the homogeneous coordinate system of $\mathbb P^1(\mathbb C).$ The characteristic function of $\psi$ with respect to $i^*\omega_{FS}$ is defined by
$$\hat{T}_\psi(r,\omega_{FS}) = \frac{1}{4}\int_{B_o(r)}g_r(o,x)\Delta_M\log(1+|\psi(x)|^2)dV(x).$$
Clearly, $$\hat{T}_\psi(r,\omega_{FS})\leq T_\psi(r,\omega_{FS}).$$
We adopt the spherical distance $\|\cdot,\cdot\|$ on  $\mathbb P^1(\mathbb C),$ the proximity function of $\psi$  with respect to
$a\in \mathbb P^1(\mathbb C)=\mathbb C\cup\{\infty\}$
is defined by
$$\hat{m}_\psi(r,a)=\int_{S_o(r)}\log\frac{1}{\|\psi(x),a\|}d\pi_o^r(x).$$
Again,  set
$$\hat{N}_\psi(r,a)=\frac{\pi^m}{(m-1)!}\int_{\psi^{-1}(a)\cap B_o(r)}g_r(o,x)\alpha^{m-1}.$$
Using the similar arguments as in the proof of Theorem \ref{first2}, we  easily show that
$\hat{T}_\psi(r,\omega_{FS})=\hat{m}_\psi(r,a)+\hat{N}_\psi(r,a)+O(1).$
We  also define the Nevanlinna's characteristic function
$$T(r,\psi):=m(r,\psi)+N(r,\psi),$$
where
\begin{eqnarray*}
m(r,\psi)&=&\int_{S_o(r)}\log^+|\psi(x)|d\pi^r_o(x), \\
N(r,\psi)&=& \frac{\pi^m}{(m-1)!}\int_{\psi^{-1}(\infty)\cap B_o(r)}g_r(o,x)\alpha^{m-1}.
 \end{eqnarray*}
 Clearly,  $N(r,\psi)=\hat{N}_\psi(r,\infty)$ and $m(r,\psi)=\hat{m}_\psi(r,\infty)+O(1).$ Thus,
 \begin{equation}\label{goed}
   T(r,\psi)=\hat{T}_\psi(r,\omega_{FS})+O(1), \ \ \ T\Big(r,\frac{1}{\psi-a}\Big)= T(r,\psi)+O(1).
 \end{equation}
 \ \ \ \ On $\mathbb P^1(\mathbb C),$ we take a singular metric
$$\Phi=\frac{1}{|\zeta|^2(1+\log^2|\zeta|)}\frac{\sqrt{-1}}{4\pi^2}d\zeta\wedge d\bar \zeta.$$
A direct computation shows that
\begin{equation}\label{ada}
\int_{\mathbb P^1(\mathbb C)}\Phi=1, \ \ \ 2m\pi\frac{\psi^*\Phi\wedge\alpha^{m-1}}{\alpha^m}=\frac{\|\nabla_M\psi\|^2}{|\psi|^2(1+\log^2|\psi|)}.
\end{equation}
Set
$$T_\psi(r,\Phi)=\frac{1}{2}\int_{B_o(r)}g_r(o,x)e_{\psi^*\Phi}(x)dV(x).$$
Invoking (\ref{ada}), we obtain
\begin{equation}\label{ffww}
  T_\psi(r,\Phi)=\frac{1}{2\pi}\int_{B_o(r)}g_r(o,x)\frac{\|\nabla_M\psi\|^2}{|\psi|^2(1+\log^2|\psi|)}(x)dV(x).
\end{equation}
\begin{lemma}\label{oo12} We have
$$T_\psi(r,\Phi)\leq T(r,\psi)+O(1).$$
\end{lemma}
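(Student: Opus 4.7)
The plan is to represent the singular form $\Phi$ as $dd^c h$ on $\mathbb{C}^*$ for an explicit radial primitive $h$, so that $T_\psi(r,\Phi)$ becomes a Dynkin-type boundary expectation of $h\circ\psi$ which can be dominated by $T(r,\psi)$. Seeking $h(\zeta)=F(\log|\zeta|)$, the equation $dd^c h=\Phi$ on $\mathbb{C}^*$ reduces, by a direct computation in polar coordinates, to the ODE $F''(u)=\tfrac{2}{\pi(1+u^2)}$, which I solve by
\begin{equation*}
F(u)=\tfrac{2}{\pi}\bigl(u\arctan u-\tfrac{1}{2}\log(1+u^2)\bigr).
\end{equation*}
The decisive features of this $F$ are that it is even, non-negative, convex ($F''\geq 0$), and satisfies the pointwise bound $F(u)\leq |u|$; equivalently $F(\log|\psi|)\leq \log^+|\psi|+\log^+(1/|\psi|)$.

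Next, using the pluriharmonicity of $\log|\psi|$ on $M\setminus\psi^{-1}(\{0,\infty\})$ and the chain rule, a short calculation gives $\Delta_M\bigl(F(\log|\psi|)\bigr)=F''(\log|\psi|)\,\|\nabla_M\psi\|^2/|\psi|^2$ on this smooth locus. Combined with (\ref{ffww}) and the co-area formula, this rewrites
\begin{equation*}
T_\psi(r,\Phi)=\tfrac{1}{4}\,\mathbb{E}_o\!\left[\int_0^{\tau_r}\Delta_M\bigl(F(\log|\psi|)\bigr)(X_t)\,dt\right].
\end{equation*}

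The main obstacle is that $F(\log|\psi|)$ blows up on the zero/pole locus $\psi^{-1}(\{0,\infty\})$, so Dynkin's formula cannot be invoked on $[0,\tau_r]$ directly. I would remove this obstruction exactly as in the proof of Theorem \ref{first2}, introducing the stopping time $T_\lambda=\inf\{t\geq 0:|\log|\psi(X_t)||>\lambda\}$, on which $F(\log|\psi|)\leq F(\lambda)\leq\lambda$. Dynkin on $[0,\tau_r\wedge T_\lambda]$ yields an identity whose left-hand side, since $F''\geq 0$ makes the integrand non-negative, increases by monotone convergence to $2T_\psi(r,\Phi)$ as $\lambda\to\infty$. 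On the right-hand side, splitting the expectation over $\{\tau_r<T_\lambda\}\cup\{T_\lambda\leq\tau_r\}$ and using $F(u)\leq|u|$ bounds the first piece by $m(r,\psi)+m(r,1/\psi)$ in the limit, while the second piece $F(\lambda)\mathbb{P}_o(T_\lambda\leq\tau_r)$ is controlled, via the union bound $\{T_\lambda\leq\tau_r\}\subset\{\sup_{[0,\tau_r]}\log|\psi(X_t)|>\lambda\}\cup\{\sup_{[0,\tau_r]}\log(1/|\psi(X_t)|)>\lambda\}$ together with the probabilistic expression (\ref{prob}) of the counting function applied to the divisors $\{0\},\{\infty\}\in|O(1)|$ on $\mathbb{P}^1$, by $N(r,\psi)+N(r,1/\psi)$. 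Assembling and invoking (\ref{goed}) to replace $T(r,1/\psi)$ by $T(r,\psi)+O(1)$ yields $2T_\psi(r,\Phi)\leq 2T(r,\psi)+O(1)$, which is the claim.
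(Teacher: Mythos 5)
Your proof is correct, and it takes a genuinely different route from the paper's. The paper's argument is a short Crofton/Fubini computation: write $T_\psi(r,\Phi)=\int_{\mathbb P^1}N_\psi(r,\zeta)\,\Phi(\zeta)$, invoke the Nevanlinna inequality (Corollary \ref{bzda}) to bound $N_\psi(r,\zeta)\le T(r,\psi)+O(1)$ uniformly in $\zeta$, and use $\int_{\mathbb P^1}\Phi=1$. It is three lines and offloads all difficulty onto the already-proved FMT. Your argument instead produces an explicit radial primitive: you solve $dd^c\bigl(F(\log|\zeta|)\bigr)=\Phi$ on $\mathbb C^*$ with $F''(u)=\tfrac{2}{\pi(1+u^2)}$, check that $F$ is even, convex, nonnegative with $F(u)\le|u|$ (so $F\circ\log|\psi|$ is plurisubharmonic away from $\psi^{-1}\{0,\infty\}$ and dominated by $\log^+|\psi|+\log^+(1/|\psi|)$), identify $T_\psi(r,\Phi)$ as $\tfrac14\mathbb E_o[\int_0^{\tau_r}\Delta_M(F\circ\log|\psi|)\,dt]$ via (\ref{ffww}) and co-area, and then run the same stopping-time/Dynkin/monotone-convergence pattern as in Theorem \ref{first2}, with the escape term controlled through (\ref{prob}) and a union bound; the final $T(r,1/\psi)=T(r,\psi)+O(1)$ from (\ref{goed}) closes the estimate. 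I verified the ODE, the chain-rule identity $\Delta_M(F\circ\log|\psi|)=F''(\log|\psi|)\,\|\nabla_M\psi\|^2/|\psi|^2$ (using that $\log|\psi|$ is pluriharmonic off the zero/pole set, so the $F'\Delta_M\log|\psi|$ term drops), and the bookkeeping of the two pieces of the boundary expectation; everything checks. What your approach buys is a self-contained, potential-theoretic proof that does not pass through the Crofton-type disintegration and displays the precise primitive of the Ahlfors singular metric; what the paper's approach buys is brevity and the flexibility of handling any probability form on $\mathbb P^1$ without solving an ODE. One small caution: when $\dim_{\mathbb C}M\ge 2$ the function $\psi$ may have an indeterminacy set, and just as in the proof of Theorem \ref{first2} your stopping time $T_\lambda$ must be defined so as to excise both the zero/pole divisor and the relevant part of the indeterminacy locus; you allude to this by saying ``exactly as in the proof of Theorem \ref{first2}'', which is acceptable but worth spelling out.
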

\begin{proof} By Fubini's theorem and Corollary \ref{bzda}
\begin{eqnarray*}
% \nonumber to remove numbering (before each equation)
T_\psi(r,\Phi)&=&m\int_{B_o(r)}g_r(o,x)\frac{\psi^*\Phi\wedge\alpha^{m-1}}{\alpha^m}dV(x)  \\
&=&\frac{\pi^m}{(m-1)!}\int_{\zeta\in\mathbb P^1(\mathbb C)}\Phi(\zeta)\int_{\psi^{-1}(\zeta)\cap B_o(r)}g_r(o,x)\alpha^{m-1} \\
&=&\int_{\zeta\in\mathbb P^1(\mathbb C)}N_\psi(r,\zeta)\Phi(\zeta)  \\
&\leq&\int_{\zeta\in\mathbb P^1(\mathbb C)}\big{(}T(r,\psi)+O(1)\big{)}\Phi(\zeta) \\
&=& T(r,\psi)+O(1).
\end{eqnarray*}
The proof is completed.
\end{proof}
\begin{lemma}\label{999a} Assume that $\psi(x)\not\equiv0.$ For any $\delta>0,$ there are $C(o, r, \delta)>0$  independent of $\psi$ and
 $E_\delta\subset(1,\infty)$ of finite Lebesgue measure such that
\begin{eqnarray*}
% \nonumber to remove numbering (before each equation)
  \mathbb E_o\left[\log^+\frac{\|\nabla_M\psi\|^2}{|\psi|^2(1+\log^2|\psi|)}(X_{\tau_r})\right]
  &\leq&(1+\delta)^2\log T(r,\psi)+\log C(o, r, \delta)
\end{eqnarray*}
 holds for $r>1$ outside  $E_\delta.$
\end{lemma}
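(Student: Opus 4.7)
The plan is to combine the Calculus Lemma (Lemma \ref{calculus}) with Jensen's inequality and the identification of $T_\psi(r,\Phi)$ made in formula (\ref{ffww}), reducing the desired estimate to the characteristic bound supplied by Lemma \ref{oo12}.

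First I would set
$$k(x)=\frac{\|\nabla_M\psi\|^2}{|\psi|^2(1+\log^2|\psi|)}(x),$$
which is locally integrable (it is the density of $\psi^*\Phi$ with respect to $\alpha^m/m!$, and $\Phi$ is a singular metric of integrable mass), and is locally bounded at $o$ so long as $\psi$ is not identically $0$; if $\psi(o)\in\{0,\infty\}$ one can perturb $o$ slightly or argue at a nearby reference point, as is standard. Then the Calculus Lemma applies to $k$ and gives
$$\mathbb{E}_o[k(X_{\tau_r})]\leq C(o,r,\delta)\Bigl(\mathbb{E}_o\Bigl[\int_0^{\tau_r}k(X_t)\,dt\Bigr]\Bigr)^{(1+\delta)^2}$$
outside an exceptional set $E_\delta\subset[0,\infty)$ of finite Lebesgue measure, with $C(o,r,\delta)$ independent of $k$ (hence of $\psi$).

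Next I would rewrite the inner expectation via the co-area formula (\ref{coa}) as
$$\mathbb{E}_o\Bigl[\int_0^{\tau_r}k(X_t)\,dt\Bigr]=\int_{B_o(r)}g_r(o,x)k(x)\,dV(x)=2\pi\,T_\psi(r,\Phi),$$
using identity (\ref{ffww}). Lemma \ref{oo12} then yields $T_\psi(r,\Phi)\leq T(r,\psi)+O(1)$, so combining,
$$\mathbb{E}_o[k(X_{\tau_r})]\leq C(o,r,\delta)\bigl(2\pi T(r,\psi)+O(1)\bigr)^{(1+\delta)^2}$$
for $r>1$ outside $E_\delta$.

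To pass from the expectation of $k$ to the expectation of $\log^+k$, I would use concavity: since $\log^+ t=\log\max(t,1)$ is concave in $t\geq 0$, Jensen's inequality gives
$$\mathbb{E}_o[\log^+ k(X_{\tau_r})]\leq \log^+\mathbb{E}_o[k(X_{\tau_r})]+O(1).$$
Substituting the bound on $\mathbb{E}_o[k(X_{\tau_r})]$ and absorbing the harmless $O(1)$ terms (and the $\log(2\pi)$) into a redefined $C(o,r,\delta)$ yields
$$\mathbb{E}_o[\log^+ k(X_{\tau_r})]\leq (1+\delta)^2\log T(r,\psi)+\log C(o,r,\delta)$$
for $r>1$ outside $E_\delta$, which is the desired inequality. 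The only delicate step is the Jensen move together with the quantitative tracking that $C(o,r,\delta)$ remains independent of $\psi$; since the Calculus Lemma explicitly provides this $\psi$-independence and Lemma \ref{oo12} feeds in $T(r,\psi)$ cleanly, the rest is bookkeeping. I expect no genuine obstacle beyond carefully handling the case where $\psi(o)$ lies in the indeterminacy locus of $k$, which is resolved by shifting the reference point (or equivalently by observing that the estimates are only needed for large $r$ and all constants are absorbed into $C(o,r,\delta)$).
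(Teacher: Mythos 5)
Your proposal takes essentially the same route as the paper: Jensen's inequality, the Calculus Lemma, the co-area formula together with identity (\ref{ffww}), and Lemma \ref{oo12}. One slip worth correcting: $\log^+ t=\max(\log t,0)$ is \emph{not} concave (it is the pointwise maximum of a concave function and a linear one, which need not be concave, and indeed fails to be so near $t=1$). The paper avoids this by first using $\log^+ t\leq\log(1+t)$ and then applying Jensen to the genuinely concave function $t\mapsto\log(1+t)$, which gives $\mathbb{E}_o[\log^+ k(X_{\tau_r})]\leq\log\bigl(1+\mathbb{E}_o[k(X_{\tau_r})]\bigr)\leq\log^+\mathbb{E}_o[k(X_{\tau_r})]+O(1)$. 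If you replace your Jensen step by this two-stage argument, the rest of your proof goes through as written and matches the paper's.
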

\begin{proof} By Jensen inequality, it is clear that
\begin{eqnarray*}
% \nonumber to remove numbering (before each equation)
  \mathbb E_o\left[\log^+\frac{\|\nabla_M\psi\|^2}{|\psi|^2(1+\log^2|\psi|)}(X_{\tau_r})\right]
   &\leq&  \mathbb E_o\left[\log\Big{(}1+\frac{\|\nabla_M\psi\|^2}{|\psi|^2(1+\log^2|\psi|)}(X_{\tau_r})\Big{)}\right] \nonumber \\
    &\leq& \log^+\mathbb E_o\left[\frac{\|\nabla_M\psi\|^2}{|\psi|^2(1+\log^2|\psi|)}(X_{\tau_r})\right]+O(1). \nonumber
\end{eqnarray*}
By Lemma \ref{calculus} and co-area formula, there is $C(o, r, \delta)>0$  such that
\begin{eqnarray*}
% \nonumber to remove numbering (before each equation)
   && \log^+\mathbb E_o\left[\frac{\|\nabla_M\psi\|^2}{|\psi|^2(1+\log^2|\psi|)}(X_{\tau_r})\right]  \\
   &\leq& (1+\delta)^2\log^+\mathbb E_o\left[\int_0^{\tau_r}\frac{\|\nabla_M\psi\|^2}{|\psi|^2(1+\log^2|\psi|)}(X_{t})dt\right]
   +\log C(o, r, \delta)
   \nonumber \\
   &\leq& (1+\delta)^2\log T(r,\psi)+\log C(o, r, \delta)+O(1), \nonumber
\end{eqnarray*}
where  Lemma \ref{oo12} and (\ref{ffww}) are applied. Modify $C(o, r, \delta)$ such that the term $O(1)$ is removed, then we get the desired  inequality.
\end{proof}

Define
$$m\left(r,\frac{\|\nabla_M\psi\|}{|\psi|}\right)=\int_{S_o(r)}\log^+\frac{\|\nabla_M\psi\|}{|\psi|}(x)d\pi^r_o(x).$$

We now prove Theorem \ref{log1}:

\begin{proof} On the one hand,
\begin{eqnarray*}
% \nonumber to remove numbering (before each equation)
    m\left(r,\frac{\|\nabla_M\psi\|}{|\psi|}\right)
   &\leq& \frac{1}{2}\int_{S_o(r)}\log^+\frac{\|\nabla_M\psi\|^2}{|\psi|^2(1+\log^2|\psi|)}(x)d\pi^r_o(x) \\
  && +\frac{1}{2}\int_{S_o(r)}\log^+\big{(}1+\log^2|\psi(x)|\big{)}d\pi^r_o(x) \\
  &=& \frac{1}{2}\mathbb E_o\left[\log^+\frac{\|\nabla_M\psi\|^2}{|\psi|^2(1+\log^2|\psi|)}(X_{\tau_r})\right] \\
   && +\frac{1}{2}\int_{S_o(r)}\log\big{(}1+\log^2|\psi(x)|\big{)}d\pi^r_o(x)  \\
    &\leq& \frac{1}{2}\mathbb E_o\left[\log^+\frac{\|\nabla_M\psi\|^2}{|\psi|^2(1+\log^2|\psi|)}(X_{\tau_r})\right] \\
  && +\frac{1}{2}\int_{S_o(r)}\log\Big{(}1+\big{(}\log^+|\psi(x)|+\log^+\frac{1}{|\psi(x)|}\big{)}^2\Big{)}d\pi^r_o(x)  \\
   &\leq& \frac{1}{2}\mathbb E_o\left[\log^+\frac{\|\nabla_M\psi\|^2}{|\psi|^2(1+\log^2|\psi|)}(X_{\tau_r})\right]  \\
  && +\int_{S_o(r)}\log\Big{(}1+\log^+|\psi(x)|+\log^+\frac{1}{|\psi(x)|}\Big{)}d\pi^r_o(x).
\end{eqnarray*}
Lemma \ref{999a} implies that
\begin{eqnarray*}
&& \frac{1}{2}\mathbb E_o\left[\log^+\frac{\|\nabla_M\psi\|^2}{|\psi|^2(1+\log^2|\psi|)}(X_{\tau_r})\right] \\
 &\leq& \frac{(1+\delta)^2}{2}\log T(r,\psi)+\frac{1}{2}\log C(o, r, \delta)+O(1).
\end{eqnarray*}
On the other hand, by Jensen inequality  and (\ref{goed})
\begin{eqnarray*}
% \nonumber to remove numbering (before each equation)
&& \int_{S_o(r)}\log\Big{(}1+\log^+|\psi(x)|+\log^+\frac{1}{|\psi(x)|}\Big{)}d\pi^r_o(x) \\
 &\leq& \log\int_{S_o(r)}\Big{(}1+\log^+|\psi(x)|+\log^+\frac{1}{|\psi(x)|}\Big{)}d\pi^r_o(x) \\
   &\leq& \log \big{(}m(r,\psi) +m(r,1/\psi)\big{)}+O(1)  \\
   &\leq& \log T(r,\psi)+O(1).
\end{eqnarray*}
Replacing $C(o, r, \delta)$ by $C^2(o, r, \delta)$ and combining the above, then the theorem can be proved.
\end{proof}

\subsection{Estimate of  $C(o, r, \delta)$}~

Let $M$ be a complete K\"ahler manifold of non-positive sectional curvature.
Indeed, we let $\kappa$ be defined by (\ref{kappa}). Clearly, $\kappa$ is a non-positive, non-increasing and continuous function  on $[0,\infty).$
   Treat the  differential equation
 \begin{equation}\label{G}
G''(t)+\kappa(t)G(t)=0;\ \ \ G(0)=0, \ \ G'(0)=1
 \end{equation}
on $[0,\infty).$  Now compare (\ref{G})  with $y''(t)+\kappa(0)y(t)=0$ under the same  initial conditions, we see that
 $G$ can be  estimated simply as
$$G(t)=t \ \ \text{for}  \ \kappa\equiv0; \ \ \ G(t)\geq t \ \ \text{for} \ \kappa\not\equiv0.$$
This follows that
\begin{equation}\label{v1}
  G(r)\geq r \ \ \text{for} \ r\geq0; \ \ \ \int_1^r\frac{dt}{G(t)}\leq\log r \ \ \text{for} \ r\geq1.
\end{equation}
On the other hand, we  rewrite (\ref{G}) as the form
$$\log'G(t)\cdot\log'G'(t)=-\kappa(t).$$
Since $G(t)\geq t$ is increasing,
then the decrease and non-positivity of $\kappa$ imply that for each fixed $t,$ $G$  must be satisfied one of the following two inequalities
$$\log'G(t)\leq\sqrt{-\kappa(t)} \ \ \text{for} \ t>0; \ \ \ \log'G'(t)\leq\sqrt{-\kappa(t)} \ \ \text{for} \ t\geq0.$$
By virtue of $G(t)\rightarrow0$ as $t\rightarrow0,$ by integration, $G$ is bounded from above by
\begin{equation}\label{v2}
  G(r)\leq r\exp\big(r\sqrt{-\kappa(r)}\big) \ \  \text{for} \ r\geq0.
\end{equation}
\ \ \ \ In what follows, one assumes that $M$ is simply connected.
The  purpose  of this section is to show the following Logarithmic Derivative Lemma (LDL) by estimating $C(o, r, \delta).$
\begin{theorem}[LDL]\label{log} Let
$\psi$ be a nonconstant meromorphic function on $M$. Then
\begin{eqnarray*}
% \nonumber to remove numbering (before each equation)
   m\left(r,\frac{\|\nabla_M\psi\|}{|\psi|}\right)&\leq& \Big{(}1+\frac{(1+\delta)^2}{2}\Big{)}\log T(r,\psi)
   +O\Big{(}r\sqrt{-\kappa(r)}+\delta\log r\Big{)}  \  \big{\|},
\end{eqnarray*}
where $\kappa$ is defined by $(\ref{kappa}).$
\end{theorem}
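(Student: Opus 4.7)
The plan is to apply Theorem \ref{log1} directly; its conclusion already has the correct leading $\log T(r,\psi)$ term, so the entire task is to bound the implicit constant, namely to prove
$$\log C(o,r,\delta)=O\bigl(r\sqrt{-\kappa(r)}+\delta\log r\bigr)$$
under the simple-connectedness and non-positive-curvature hypotheses in force.

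The factor $C(o,r,\delta)$ enters solely through the Calculus Lemma (Lemma \ref{calculus}). Its standard Borel-type proof, applied to the monotone function
$$F(r):=\mathbb E_o\Bigl[\int_0^{\tau_r}k(X_t)dt\Bigr]=2\int_{B_o(r)}g_r(o,x)k(x)dV(x),$$
yields $\log C(o,r,\delta)=O\bigl(\log \mathbb E_o[\tau_r]+\delta\log r\bigr)$, the $\delta\log r$ piece being the unavoidable cost of the exceptional set in Borel's lemma. The problem thus reduces to an upper bound on $\mathbb E_o[\tau_r]$.

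Here the curvature hypothesis enters via comparison geometry. Because $M$ is simply connected with non-positive sectional curvature, Cartan-Hadamard makes $\rho:=d(o,\cdot)$ smooth off $o$ with empty cut locus, and the Laplacian comparison theorem applied with the Ricci lower bound $\mathrm{Ric}_M\geq(2m-1)\kappa(r)$ on $\overline{B_o(r)}$ gives $\Delta_M\rho\leq(2m-1)G'(\rho)/G(\rho)$ with $G$ defined by (\ref{G}). Building a radial supersolution $\Phi$ of $\tfrac12\Delta_M\Phi=-1$ on $B_o(r)$ out of $G$ and applying the Dynkin formula to $\Phi(\rho(X_{t\wedge\tau_r}))$ produces $\mathbb E_o[\tau_r]\leq C(m)\,G(r)^2$ up to mild factors; combined with the bound $G(r)\leq r\exp(r\sqrt{-\kappa(r)})$ from (\ref{v2}), this gives $\log \mathbb E_o[\tau_r]=O\bigl(r\sqrt{-\kappa(r)}+\log r\bigr)$, and substituting back delivers the theorem.

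The principal obstacle is making the dependence of $C(o,r,\delta)$ on the geometry fully explicit in the Calculus Lemma. The Borel-type argument itself is routine, but it must be tracked carefully so that only $\mathbb E_o[\tau_r]$ and $\delta$ appear; once that is achieved, the curvature comparison together with the elementary estimates (\ref{v1}) and (\ref{v2}) already prepared in the paper closes the loop with essentially no additional input.
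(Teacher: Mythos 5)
Your strategy — applying Theorem \ref{log1} and then estimating $\log C(o,r,\delta)$ — matches the paper, but the way you propose to bound $C(o,r,\delta)$ contains two genuine errors, and fixing them shows that the quantity $\mathbb{E}_o[\tau_r]$ is simply not the right intermediary.

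First, the claim that the Borel-type proof of the Calculus Lemma yields $\log C(o,r,\delta)=O(\log\mathbb{E}_o[\tau_r]+\delta\log r)$ is unsubstantiated, and if it were true it would prove a strictly stronger theorem than the one asked. The paper's own Lemma \ref{yyy} shows $\mathbb{E}_o[\tau_r]\leq r^2/(2m)$, which is $O(r^2)$ with no dependence on $\kappa$. Plugging that into your claimed bound would give $\log C(o,r,\delta)=O(\log r+\delta\log r)$ and thereby an LDL with no $r\sqrt{-\kappa(r)}$ error term at all — which is too good to be true (consider hyperbolic space, where the gradient term genuinely has to pay for exponential volume growth). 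The actual source of the $r\sqrt{-\kappa(r)}$ term in the paper is not the exit time but the Green function: Lemma \ref{calculusII} is proved by combining the Green-function comparison Lemma \ref{zz} and the harmonic-measure/Green bounds of Lemma \ref{sing} with Borel's lemma applied twice to the auxiliary function $\Gamma(r)=\int_0^r dt\int_t^r G^{1-2m}(s)\,ds\int_{S_o(t)}k\,d\sigma_t$. The factor $G^{(1-2m)(1+\delta)}(r)$ appearing in (\ref{C3}) is what, via (\ref{v2}), produces $r\sqrt{-\kappa(r)}$ in $\log C(o,r,\delta)$; $\mathbb{E}_o[\tau_r]$ never appears in that derivation.

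Second, your proposed bound $\mathbb{E}_o[\tau_r]\leq C(m)G(r)^2$ via a radial supersolution uses the Laplacian comparison in the wrong direction. The Ricci-lower-bound comparison gives the \emph{upper} bound $\Delta_M\rho\leq(2m-1)G'(\rho)/G(\rho)$. For a radial barrier $\Phi=\phi(\rho)$ with $\phi$ decreasing, $\Delta_M\Phi=\phi''+\phi'\Delta_M\rho$, and since $\phi'<0$ the upper bound on $\Delta_M\rho$ yields $\Delta_M\Phi\geq\phi''+\phi'(2m-1)G'/G$; choosing $\phi$ to solve the comparison ODE then makes $\Phi$ a \emph{subsolution} and Dynkin produces a \emph{lower} bound $\mathbb{E}_o[\tau_r]\geq\Phi(o)$, not the upper bound you want. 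To get the upper bound you need the Hessian-comparison \emph{lower} bound $\Delta_M\rho\geq(2m-1)/\rho$ valid under non-positive sectional curvature, and that gives precisely Lemma \ref{yyy}'s $r^2/(2m)$ — which, as noted, cannot supply the $\kappa$-term. So the route through $\mathbb{E}_o[\tau_r]$ closes on itself without ever reaching the stated estimate; the missing ingredient is the Green-function machinery of Lemmas \ref{zz}, \ref{sing}, and \ref{calculusII}.
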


\textbf{Remark.}  The LDL still holds when $M$ is multi-connected,  one just needs to  lift $M$ to the universal covering, see the arguments in Section 5.3.

We first introduce some lemmas.  
\begin{lemma}[\cite{atsuji}]\label{zz} Let $\eta>0$ be a  number. Then there is a constant $C>0$ such that
  $$g_r(o,x)\int_{\eta}^rG^{1-2m}(t)dt\geq C\int_{r(x)}^rG^{1-2m}(t)dt$$
  holds for $r>\eta$ and $x\in B_o(r)\setminus \overline{B_o(\eta)},$ where $G$ is defined by {\rm{(\ref{G})}}.
\end{lemma}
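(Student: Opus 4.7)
The plan is to build a radial subharmonic barrier comparable to $g_r(o,\cdot)$ and then apply the maximum principle on the annular domain $\Omega:=B_o(r)\setminus\overline{B_o(\eta)}$. Since $M$ is a simply connected K\"ahler manifold of non-positive sectional curvature (a Cartan--Hadamard manifold), the distance function $x\mapsto r(x)$ is smooth on $M\setminus\{o\}$, so
$$h(x):=\int_{r(x)}^{r}G^{1-2m}(t)\,dt$$
is a smooth radial function on $\Omega$. A direct calculation yields $h'(t)=-G^{1-2m}(t)<0$ and $h''(t)=(2m-1)G^{-2m}(t)G'(t)$.

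For any radial $\phi\circ r$ one has $\Delta_M(\phi\circ r)=\phi''(r)+\phi'(r)\,\Delta_M r$. The Laplacian comparison theorem, applied with the Ricci lower bound encoded by (\ref{kappa}) and the Jacobi equation (\ref{G}), gives $\Delta_M r\leq (2m-1)G'(r)/G(r)$. Combined with $h'(r)<0$ this yields
$$\Delta_M h(x)\geq (2m-1)G^{-2m}(r)G'(r)-G^{1-2m}(r)\cdot (2m-1)\frac{G'(r)}{G(r)}=0,$$
so $h$ is subharmonic on $\Omega$.

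Set $c_\eta(r):=\inf_{y\in\partial B_o(\eta)}g_r(o,y)>0$ and consider
$$v(x):=\Big(\int_{\eta}^{r}G^{1-2m}(t)\,dt\Big)\,g_r(o,x)-c_\eta(r)\,h(x)$$
on $\Omega$. Since $g_r(o,\cdot)$ is harmonic on $B_o(r)\setminus\{o\}$ while $\Delta_M h\geq 0$, $v$ is superharmonic on $\Omega$. On $\partial B_o(r)$ both $g_r(o,\cdot)$ and $h$ vanish; on $\partial B_o(\eta)$, $h$ equals the constant $\int_\eta^r G^{1-2m}(t)\,dt$, and $g_r(o,\cdot)\geq c_\eta(r)$ by definition, so $v\geq 0$ there. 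The maximum principle then gives $v\geq 0$ on $\Omega$, i.e.
$$g_r(o,x)\int_\eta^r G^{1-2m}(t)\,dt\geq c_\eta(r)\int_{r(x)}^r G^{1-2m}(t)\,dt.$$

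The main obstacle is to convert $c_\eta(r)$ into a constant $C>0$ independent of $r$. Since enlarging the Dirichlet domain only enlarges the Green function, $c_\eta(r)$ is nondecreasing in $r$, so for applications (where $r$ is eventually large) one can simply take $C:=c_\eta(r_0)$ for any fixed $r_0>\eta$. For a fully uniform statement one compares $g_r(o,\cdot)$ from below with the explicit rotationally symmetric Green function on the warped-product model $dt^2+G(t)^2 g_{S^{2m-1}}$; its radial profile is precisely a multiple of $\int_{r(x)}^r G^{1-2m}(t)\,dt$, which also explains why this combination appears as the normalizing factor on the left-hand side.
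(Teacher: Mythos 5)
The paper cites this lemma from Atsuji \cite{atsuji} and does not reproduce a proof, so there is no in-paper argument to compare against; your barrier and maximum-principle route is a standard and valid way to obtain this kind of Green-function lower bound. The steps hold up: on a Cartan--Hadamard manifold there is no cut locus, so $r(\cdot)$ is smooth away from $o$; the Riccati/Jacobi-field comparison with the radial Ricci lower bound encoded by $\kappa$ and the model equation (\ref{G}) gives $\Delta_M r\leq(2m-1)G'(r)/G(r)$, which combined with $\phi'<0$ yields $\Delta_M h\geq0$ on the annulus; the barrier $v$ is superharmonic and nonnegative on both boundary spheres, so the minimum principle gives the inequality with $C=c_\eta(r)$; and domain monotonicity of the Green function makes $c_\eta$ nondecreasing in $r$, so $C=c_\eta(r_0)$ works uniformly for $r\geq r_0$. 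One small thing to double-check is the normalization relating the paper's $R_M$ (phrased for the K\"ahler Ricci form) to the real Ricci curvature used in the Laplacian comparison; the structure of the argument is unaffected, but the constant in front of $G'/G$ should be verified against the definition in (\ref{kappa}).

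One caution about your closing paragraph. As literally stated the lemma claims a single $C>0$ for all $r>\eta$, but that is not attainable: on $\mathbb C^m$ with $G(t)=t$ one has $g_r(o,x)=\tfrac{2}{\omega_{2m-1}}\int_{r(x)}^r t^{1-2m}\,dt$, so the asserted inequality reduces to $\tfrac{2}{\omega_{2m-1}}\int_\eta^r t^{1-2m}\,dt\geq C$, whose left-hand side tends to $0$ as $r\to\eta^{+}$. So your suggestion that comparison with the model Green function would yield a ``fully uniform statement'' is too optimistic --- that comparison controls $g_r(o,x)$ against $\int_{r(x)}^r G^{1-2m}$ with a uniform constant, but it does not control the extra factor $\int_\eta^r G^{1-2m}$, which vanishes as $r\to\eta^{+}$. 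What your proof actually delivers, a constant valid for all $r\geq r_0>\eta$, is the correct reading and exactly what the application in Lemma \ref{calculusII} uses, since only large $r$ is relevant there.
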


\begin{lemma}[\cite{Deb,13}]\label{sing} Let $M$ be a simply-connected, non-positively curved and complete Hermitian manifold of complex dimension $m$. Then
$$ (i) \ \ \   g_r(o,x)\leq\left\{
                \begin{array}{ll}
                  \frac{1}{\pi}\log\frac{r}{r(x)}, & m=1 \\
                  \frac{1}{(m-1)\omega_{2m-1}}\big{(}r^{2-2m}(x)-r^{2-2m}\big{)}, & m\geq2 \\
                \end{array}
              \right.;  \ \ \ \  \  \  \ \ \ \ \  $$
$$(ii) \ \ \ d\pi^r_{o}(x)\leq\frac{1}{\omega_{2m-1}r^{2m-1}}d\sigma_r(x), \ \ \ \ \ \ \ \  \  \ \ \ \ \ \ \ \ \ \ \ \  \ \ \ \ \
\  \ \ \ \ \ \ \ \ \ \ \ \  $$
 where $g_r(o,x)$ denotes  the Green function of $\Delta_M/2$ for $B_o(r)$ with  Dirichlet boundary condition and a pole at $o,$ 
   and $d\pi_{o}^r(x)$ is the harmonic measure on $S_o(r)$ with respect to $o,$ and $\omega_{2m-1}$ is the Euclidean volume of unit sphere in $\mathbb R^{2m},$ and
  $d\sigma_r(x)$ is the induced Riemannian volume measure on $S_o(r).$
\end{lemma}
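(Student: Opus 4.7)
The plan is to realize both inequalities as consequences of Laplacian (Hessian) comparison on a Cartan--Hadamard manifold, reducing everything to a one-dimensional ODE comparison. The underlying point is that on a simply-connected, non-positively curved Riemannian manifold of real dimension $2m$, the radial Laplacian dominates its Euclidean counterpart, which forces the Dirichlet Green function on $B_o(r)\subset M$ to be pointwise dominated by its model on the Euclidean ball of the same radius.

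For part (i), I would work with the radial distance $\rho(x):=d(o,x)$, which is smooth on $M\setminus\{o\}$ since a Cartan--Hadamard manifold has no cut locus. Let $\phi(t)$ denote the stated right-hand side, so that $\phi(|y|)$ coincides with the Dirichlet Green function of $-\Delta/2$ on the Euclidean ball of radius $r$ in $\mathbb{R}^{2m}$ (a direct computation from the fundamental solution of the Laplacian). Using $\|\nabla_M\rho\|=1$ and the chain rule,
\[
\Delta_M\phi(\rho(x))=\phi''(\rho(x))+\phi'(\rho(x))\,\Delta_M\rho(x).
\]
The Hessian comparison theorem gives $\Delta_M\rho\geq(2m-1)/\rho$ on a Cartan--Hadamard manifold, and combining this with $\phi'<0$ and the Euclidean identity $\phi''(t)+\tfrac{2m-1}{t}\phi'(t)=0$ yields $\Delta_M\phi(\rho(\cdot))\leq 0$, i.e.\ $\phi(\rho(\cdot))$ is superharmonic on $B_o(r)\setminus\{o\}$. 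Since $\phi(\rho(x))$ and $g_r(o,x)$ share the same Newtonian leading singularity at $o$ (namely $\tfrac{1}{(m-1)\omega_{2m-1}}\rho(x)^{2-2m}$ for $m\geq 2$, and $\tfrac{1}{\pi}\log\tfrac{1}{\rho(x)}$ for $m=1$) and both vanish on $\partial B_o(r)$, I would apply the maximum principle to the auxiliary function $\phi(\rho(\cdot))-(1-\varepsilon)g_r(o,\cdot)$: it is superharmonic, vanishes on $\partial B_o(r)$, and blows up to $+\infty$ at $o$ thanks to the $\varepsilon$ perturbation. Hence $\phi(\rho(x))\geq(1-\varepsilon)g_r(o,x)$ on $B_o(r)$, and letting $\varepsilon\to 0^{+}$ proves (i).

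For part (ii), I would use the standard representation $d\pi_o^r(y)=-\tfrac{1}{2}\partial_n g_r(o,y)\,d\sigma_r(y)$, valid because the generator is $\Delta_M/2$ and $\partial B_o(r)$ is smooth on a Cartan--Hadamard manifold (by Green's identity applied to harmonic test functions, using $-\tfrac{1}{2}\Delta g_r=\delta_o$). Since part (i) provides $g_r(o,\cdot)\leq\phi(\rho(\cdot))$ on $\overline{B_o(r)}$ with equality on the boundary, the (nonpositive) outward normal derivative of $g_r(o,\cdot)$ at $\partial B_o(r)$ is bounded below by that of $\phi(\rho(\cdot))$, which is simply $\phi'(r)$ by the radial structure of the comparison function. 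A one-line computation gives $\phi'(r)=-\tfrac{2}{\omega_{2m-1}}r^{1-2m}$, producing the stated density bound $d\pi_o^r\leq\tfrac{1}{\omega_{2m-1}r^{2m-1}}d\sigma_r$.

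The main obstacle is handling the common pole singularity of $g_r(o,\cdot)$ and $\phi(\rho(\cdot))$, which prevents a naive maximum principle on the full ball $B_o(r)$. The $(1-\varepsilon)$ trick in the proof of (i) is the cleanest way around it: it converts the delicate matching of singular asymptotics into the soft fact that a positive fraction of the Newtonian singularity survives and drives the perturbed function to $+\infty$ at $o$, so the classical maximum principle on the punctured ball applies. Beyond this, the only arithmetic care needed is to remember that $M$ has \emph{real} dimension $2m$, so that both the comparison constant $(2m-1)$ and the unit-sphere volume $\omega_{2m-1}$ enter with the correct exponents.
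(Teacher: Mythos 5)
Your proposal is correct. The paper does not prove Lemma \ref{sing} at all --- it is quoted from Debiard--Gaveau--Mazet and Hsu --- and your argument is essentially the standard proof found in those sources: Laplacian comparison $\Delta_M\rho\geq(2m-1)/\rho$ on a Cartan--Hadamard manifold makes the radially transplanted Euclidean Green function of $-\Delta/2$ superharmonic, the $(1-\varepsilon)$ perturbation handles the common pole correctly (both functions have the same Newtonian leading coefficient, so a positive multiple of the singularity survives and the minimum principle on the punctured ball applies), and (ii) follows from the Poisson-kernel identity $d\pi_o^r=-\tfrac12\partial_n g_r\,d\sigma_r$ together with the one-sided comparison of normal derivatives at the boundary, where $\phi'(r)=-2\omega_{2m-1}^{-1}r^{1-2m}$ gives exactly the stated constant. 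All constants check out against the normalization $-\tfrac12\Delta_M g_r=\delta_o$ used in the paper.
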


\begin{lemma}[Borel Lemma, \cite{ru}]\label{cal1} Let $T$ be a strictly positive nondecreasing  function of $\mathscr{C}^1$-class on $(0,\infty).$ Let $\gamma>0$ be a number such that $T(\gamma)\geq e,$ and $\phi$ be a strictly positive nondecreasing function such that
$$c_\phi=\int_e^\infty\frac{1}{t\phi(t)}dt<\infty.$$
Then, the inequality
  $$T'(r)\leq T(r)\phi(T(r))$$
holds for  $r\geq\gamma$ outside a set of Lebesgue measure not exceeding $c_\phi.$ Particularly, take $\phi(T)=T^\delta$ for a number $\delta>0,$  we have
  $T'(r)\leq T^{1+\delta}(r)$
holds for  $r>0$ outside a set $E_\delta\subset(0,\infty)$ of finite Lebesgue measure.
\end{lemma}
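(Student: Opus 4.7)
The plan is to follow the classical Borel-style argument: estimate the Lebesgue measure of the exceptional set by converting an $r$-integral into a $T$-integral via the change of variables $u = T(r)$. Define
$$E = \bigl\{\, r \geq \gamma \; : \; T'(r) > T(r)\,\phi(T(r)) \,\bigr\}.$$
Since $T$ and $\phi$ are strictly positive, the inequality defining $E$ forces $T'(r) > 0$ on $E$, so $E$ avoids every flat interval of $T$; this will make the substitution $u = T(r)$ clean.

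Next I would bound $|E|$ by a trivial inequality followed by the substitution. On $E$, the ratio $T'(r)/\bigl(T(r)\phi(T(r))\bigr)$ exceeds $1$, so
$$|E| \leq \int_E \frac{T'(r)}{T(r)\phi(T(r))}\,dr.$$
Because $T$ is $\mathscr{C}^1$ and nondecreasing, the co-area / change-of-variables formula gives $T'(r)\,dr = du$ and maps $E$ into $T(E) \subset [T(\gamma),\infty) \subset [e,\infty)$. Hence
$$|E| \leq \int_{T(E)} \frac{du}{u\,\phi(u)} \leq \int_e^\infty \frac{du}{u\,\phi(u)} = c_\phi,$$
which is the first assertion of the lemma.

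For the particular case $\phi(T) = T^\delta$ with $\delta > 0$, I would simply compute
$$c_\phi = \int_e^\infty \frac{dt}{t^{\,1+\delta}} = \frac{e^{-\delta}}{\delta} < \infty,$$
and take the exceptional set to be $E_\delta = (0,\gamma] \cup E$, where $\gamma$ is any number with $T(\gamma) \geq e$ (such $\gamma$ exists whenever $T$ is unbounded, which is the only case of interest). The first piece has finite measure $\gamma$ and the second has measure at most $e^{-\delta}/\delta$, so the total is finite, and on its complement $T'(r) \leq T(r)^{1+\delta}$ holds as required.

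The only nontrivial technical point is the change of variables when $T$ is merely nondecreasing rather than strictly increasing; this is handled precisely by the observation made at the outset that $E \subset \{T' > 0\}$, so the substitution is legitimate and no multiplicity correction is needed. Everything else is elementary manipulation, and since the lemma statement explicitly invokes $\mathscr{C}^1$ regularity and positivity, no further hypotheses are required.
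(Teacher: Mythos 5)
Your proof is correct and is essentially the standard Borel-growth argument; the paper itself gives no proof of this lemma (it is quoted from the cited reference), and your argument matches the classical one there: bound $|E|$ by $\int_E T'/(T\phi(T))\,dr$ and substitute $u=T(r)$. Your observation that $E\subset\{T'>0\}$ (together with the fact that a nondecreasing $T$ has only countably many nondegenerate level sets) adequately disposes of the only delicate point in the change of variables, and the reduction of the second assertion to the first by adjoining $(0,\gamma]$ to the exceptional set is exactly as intended.
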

We now prove the following so-called Calculus Lemma (see also \cite{atsuji}) which gives an estimate of $C(o,r,\delta).$
\begin{lemma}[Calculus Lemma]\label{calculusII} Let $k\geq0$ be a locally integrable function on $M$ such that it is locally bounded at $o\in M.$
 Then for any $\delta>0,$ there is a constant $C>0$ independent of $k,\delta,$ and a set $E_\delta\subset(1,\infty)$ of finite Lebesgue measure such that
  $$\mathbb E_o[k(X_{\tau_r})]\leq \frac{C^{(1+\delta)^2}
\log^{(1+\delta)^2}r}{r^{(1-2m)\delta}e^{(1-2m)(1+\delta)r\sqrt{-\kappa(r)}}}\left(\mathbb E_o\left[\int_0^{\tau_r}k(X_{t})dt\right]\right)^{(1+\delta)^2}$$
  holds for $r>1$ outside $E_\delta,$ where $\kappa$ is defined by $(\ref{kappa}).$
\end{lemma}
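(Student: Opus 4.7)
The plan is to realize both sides as integrals over $B_o(r)$ and $S_o(r)$, bound them using the Green function estimate (Lemma \ref{zz}) and the harmonic measure estimate (Lemma \ref{sing}(ii)), and then extract the exponent $(1+\delta)^2$ via two successive applications of Borel's Lemma (Lemma \ref{cal1}).

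First, I would fix $\eta>0$ and set $K(r) := \int_{B_o(r)\setminus\overline{B_o(\eta)}} k\, dV$ and $H(r) := \int_\eta^r G^{1-2m}(t)\,K(t)\, dt$. Riemannian co-area gives $K'(r)=\int_{S_o(r)} k\, d\sigma_r$ and $H'(r)=G^{1-2m}(r)K(r)$, and both $K,H$ are nondecreasing. Lemma \ref{sing}(ii) converts the left-hand side to
$$\mathbb{E}_o[k(X_{\tau_r})] \;=\; \int_{S_o(r)} k\, d\pi_o^r \;\leq\; \frac{K'(r)}{\omega_{2m-1}\, r^{2m-1}}.$$
For the right-hand side, (\ref{coa}) gives $\mathbb{E}_o[\int_0^{\tau_r} k(X_t)\, dt] = \int_{B_o(r)} g_r(o,x)\,k(x)\,dV(x)$; splitting at $\partial B_o(\eta)$ (using local boundedness of $k$ near $o$ to control the piece over $B_o(\eta)$), applying Lemma \ref{zz} on $B_o(r)\setminus\overline{B_o(\eta)}$, and swapping the order of integration via $\int_{r(x)}^r G^{1-2m}(t)\,dt = \int_\eta^r G^{1-2m}(t)\, \mathbf{1}_{B_o(t)}(x)\,dt$, I obtain
$$\mathbb{E}_o\left[\int_0^{\tau_r} k(X_t)\, dt\right] \;\geq\; \frac{C\, H(r)}{\int_\eta^r G^{1-2m}(t)\, dt}.$$

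Next I would apply Lemma \ref{cal1} twice. First to $H$: outside a set of finite Lebesgue measure, $H'(r)\leq H(r)^{1+\delta}$, i.e.\ $K(r)\leq G^{2m-1}(r)\,H(r)^{1+\delta}$. Then to $K$: outside another finite-measure set, $K'(r)\leq K(r)^{1+\delta}$, so that
$$K'(r) \;\leq\; G^{(2m-1)(1+\delta)}(r)\, H(r)^{(1+\delta)^2}$$
off the union $E_\delta$. Dividing by $\omega_{2m-1}r^{2m-1}$ and inserting the lower bound on $H(r)$ reduces matters to bounding $G^{(2m-1)(1+\delta)}(r)/r^{2m-1}$ and $\int_\eta^r G^{1-2m}(t)\,dt$, both supplied by (\ref{v1}) and (\ref{v2}): $G(r)\leq r\,e^{r\sqrt{-\kappa(r)}}$ yields the factor $r^{(2m-1)\delta}\,e^{(2m-1)(1+\delta)r\sqrt{-\kappa(r)}}$, while $G(t)\geq t$ gives $\int_\eta^r G^{1-2m}(t)\,dt \leq C'\log r$ (a constant for $m\geq 2$; at most $\log r$ for $m=1$ once $\eta=1$). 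Absorbing all universal constants into a single $C>0$ independent of $k$ and $\delta$ yields the stated inequality.

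The main obstacle I expect is the Fubini swap at the heart of the lower bound: I must excise $B_o(\eta)$ cleanly so that the singularity of $g_r$ at $o$ does not spoil the estimate, and verify that the omitted contribution—controlled by local boundedness of $k$ at $o$—is dominated by the $H(r)$ term as $r\to\infty$. The remaining work is routine bookkeeping of exponents through the two Borel applications.
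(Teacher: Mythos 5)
Your proposal is correct and follows essentially the same route as the paper: co-area plus the Green function lower bound (Lemma \ref{zz}) and harmonic measure upper bound (Lemma \ref{sing}(ii)), then two applications of Borel's lemma, then the growth bounds (\ref{v1})--(\ref{v2}) on $G$; your $K(r)$ and $H(r)$ are precisely the paper's $\Gamma'(r)/G^{1-2m}(r)$ and $\Gamma(r)$ (up to the $B_o(\eta)$ excision, which you handle slightly more carefully than the paper's $\int_0^r$). No substantive differences.
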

\begin{proof}
By  Lemma \ref{zz} and Lemma \ref{sing} with (\ref{v1}), we get
\begin{eqnarray*}
% \nonumber to remove numbering (before each equation)
   \mathbb E_o\left[\int_0^{\tau_{r}}k(X_{t})dt\right]&=& \int_{B_o(r)}g_r(o,x)k(x)dV(x) \\
   &=&\int_0^rdt\int_{S_o(t)}g_r(o,x)k(x)d\sigma_t(x)  \\
&\geq& C_0\int_0^r\frac{\int_t^rG^{1-2m}(s)ds}{\int_1^rG^{1-2m}(s)ds}dt\int_{S_o(t)}k(x)d\sigma_t(x) \\
&=& \frac{C_0}{\log r}\int_0^rdt\int_t^rG^{1-2m}(s)ds\int_{S_o(t)}k(x)d\sigma_t(x)
\end{eqnarray*}
and
$$\mathbb E_o\big{[}k(X_{\tau_r})\big{]}=\int_{S_o(r)}k(x)d\pi_o^r(x)\leq\frac{1}{\omega_{2m-1}r^{2m-1}}\int_{S_o(r)}k(x)d\sigma_r(x),$$
where $\omega_{2m-1}$ denotes the Euclidean volume of unit sphere in $\mathbb R^{2m},$
  $d\sigma_r$ is the induced volume measure  on $S_o(r).$ Hence,
 $$\mathbb E_o\left[\int_0^{\tau_{r}}k(X_{t})dt\right]\geq\frac{C_0}{\log r}\int_0^rdt\int_t^rG^{1-2m}(s)ds\int_{S_o(t)}k(x)d\sigma_t(x)$$
and
\begin{equation}\label{fr}
  \mathbb E_o\big{[}k(X_{\tau_r})\big{]}\leq\frac{1}{\omega_{2m-1}r^{2m-1}}\int_{S_o(r)}k(x)d\sigma_r(x).
\end{equation}
 Put
$$\Gamma(r)=\int_0^rdt\int_t^rG^{1-2m}(s)ds\int_{S_o(t)}k(x)d\sigma_t(x).$$
Then
\begin{equation}\label{A1}
 \Gamma(r)\leq\frac{\log r}{C_0}\mathbb E_o\left[\int_0^{\tau_{r}}k(X_{t})dt\right].
\end{equation}
A simple computation shows that
$$\Gamma'(r)=G^{1-2m}(r)\int_0^rdt\int_{S_o(t)}k(x)d\sigma_t(x).$$
By this with (\ref{fr})
\begin{equation}\label{B2}
  \mathbb E_o\big{[}k(X_{\tau_r})\big{]}\leq\frac{1}{\omega_{2m-1}r^{2m-1}}\frac{d}{dr}\left(\frac{\Gamma'(r)}{G^{1-2m}(r)}\right).
\end{equation}
Using Lemma \ref{cal1} twice, for any $\delta>0$ we have
\begin{equation}\label{C3}
  \frac{d}{dr}\left(\frac{\Gamma'(r)}{G^{1-2m}(r)}\right)\leq\frac{\Gamma^{(1+\delta)^2}(r)}{G^{(1-2m)(1+\delta)}(r)}
\end{equation}
holds outside a set $E_{\delta}\subset(1,\infty)$ of finite Lebesgue measure.
Using  (\ref{A1})-(\ref{C3}) and (\ref{v2}), it is not hard to conclude that
$$\mathbb E_o\big{[}k(X_{\tau_r})\big{]}\leq \frac{C^{(1+\delta)^2}
\log^{(1+\delta)^2}r}{r^{(1-2m)\delta}e^{(1-2m)(1+\delta)r\sqrt{-\kappa(r)}}}\left(\mathbb E_o\left[\int_0^{\tau_{r}}k(X_{t})dt\right]\right)^{(1+\delta)^2}$$
with $C=1/C_0>0$ being a constant independent of $k,\delta.$
\end{proof}
Lemma \ref{calculusII} implies an estimate
$$ C(o, r, \delta)\leq \frac{C^{(1+\delta)^2}
\log^{(1+\delta)^2}r}{r^{(1-2m)\delta}e^{(1-2m)(1+\delta)r\sqrt{-\kappa(r)}}}.$$
Thus, we get
\begin{equation}\label{esti}
  \log C(o, r, \delta)\leq O\Big{(}r\sqrt{-\kappa(r)}+\delta\log r\Big{)}.
\end{equation}

We prove Theorem \ref{log}:
\begin{proof}
Combining Theorem \ref{log1} with (\ref{esti}), we show the theorem.
\end{proof}

\section{Second Main Theorem}

\subsection{Meromorphic mappings into $\mathbb P^n(\mathbb C)$}~

In this subsection,  $M$ is a general  K\"ahler manifold.

 Let $\psi:M\rightarrow \mathbb P^n(\mathbb C)$ be a  meromorphic mapping, i.e.,  there exists an open covering $\{U_\alpha\}$ of $M$ such that
   $\psi$ has a local representation
 $[\psi_0^{\alpha}:\cdots:\psi_{n}^\alpha]$
on each $U_\alpha,$ where $\psi^\alpha_0,\cdots,\psi^\alpha_{n}$ are holomorphic functions on  $U_\alpha$ satisfying
$${\rm{codim}}_{\mathbb C}(\psi^\alpha_0=\cdots=\psi^\alpha_n=0)\geq2.$$
 Let $[w_0:\cdots:w_{n}]$  denote the homogeneous coordinate  of $\mathbb P^n(\mathbb C).$ Assuming that $w_0\circ \psi\not\equiv0.$
 Let $i:\mathbb C^n\hookrightarrow\mathbb P^n(\mathbb C)$ be an inclusion given by
 $(z_1,\cdots,z_n)\mapsto[1:z_1:\cdots:z_n].$
Clearly, $\omega_{FS}$ induces a (1,1)-form $i^*\omega_{FS}=dd^c\log(|\zeta_0|^2+|\zeta_1|^2+\cdots+|\zeta_n|^2)$ on $\mathbb C^n,$ where $\zeta_j:=w_j/w_0$ for $0\leq j\leq n.$
  The characteristic function of $\psi$ with respect to $i^*\omega_{FS}$ is  well defined by
\begin{eqnarray*}
% \nonumber to remove numbering (before each equation)
 \hat{T}_\psi(r,\omega_{FS}) &=& \frac{1}{4}\int_{B_o(r)}g_r(o,x)\Delta_M\log\Big{(}\sum_{j=0}^n|\zeta_{j}\circ \psi(x)|^2\Big{)}dV(x).
\end{eqnarray*}
Clearly,
$$ \hat{T}_\psi(r,\omega_{FS})\leq\frac{1}{4}\int_{B_o(r)}g_r(o,x)\Delta_M\log\|\psi(x)\|^2dV(x)=T_\psi(r,\omega_{FS}).$$
The co-area formula leads to
$$  \hat{T}_\psi(r,\omega_{FS})=\frac{1}{4}\mathbb E_o\Big{[}\int_0^{\tau_r}\Delta_M\log\Big{(}\sum_{j=0}^n|\zeta_{j}\circ\psi(X_{t})|^2\Big{)}dt\Big{]}.
$$
 Note that the  pole divisor of $\zeta_{j}\circ \psi$ is pluripolar. By Dynkin formula
\begin{eqnarray*}
% \nonumber to remove numbering (before each equation)
\hat{T}_\psi(r,\omega_{FS})&=&\frac{1}{2}\int_{S_o(r)}\log\Big{(}\sum_{j=0}^n|\zeta_{j}\circ\psi(x)|^2\Big{)}d\pi^r_o(x)
-\frac{1}{2}\log\Big{(}\sum_{j=0}^n|\zeta_j\circ\psi(o)|^2\Big{)}, \\
 \hat{T}_{\zeta_{j}\circ\psi}(r, \omega_{FS})
&=&\frac{1}{2}\int_{S_o(r)}\log\big{(}1+|\zeta_{j}\circ\psi(x)|^2\big{)}d\pi^r_o(x)-\frac{1}{2}\log\big{(}1+|\zeta_{j}\circ\psi(o)|^2\big{)}.
\end{eqnarray*}
\begin{theorem}\label{v000} We have
$$\max_{1\leq j\leq n}T(r,\zeta_{j}\circ\psi)+O(1)\leq \hat{T}_\psi(r,\omega_{FS})\leq\sum_{j=1}^nT(r,\zeta_{j}\circ\psi)+O(1).$$
\end{theorem}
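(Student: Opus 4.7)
The plan is to reduce everything to the explicit integral representations of $\widehat{T}_\psi(r,\omega_{FS})$ and $\widehat{T}_{\zeta_j\circ\psi}(r,\omega_{FS})$ already derived in the paragraph preceding the theorem, and then squeeze the quantity $\log\bigl(\sum_{j=0}^n|\zeta_j\circ\psi|^2\bigr)$ between a maximum and a sum of the quantities $\log(1+|\zeta_j\circ\psi|^2)$ by elementary pointwise inequalities.

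First I would exploit the identity $\zeta_0\equiv 1$ (since $\zeta_0=w_0/w_0$) to rewrite
\[
\sum_{j=0}^n|\zeta_j\circ\psi|^2 \;=\; 1+\sum_{j=1}^n|\zeta_j\circ\psi|^2.
\]
Then I would invoke the two elementary pointwise bounds, valid on $M\setminus I$,
\[
\max_{1\le j\le n}\bigl(1+|\zeta_j\circ\psi|^2\bigr)\;\le\;1+\sum_{j=1}^n|\zeta_j\circ\psi|^2\;\le\;\prod_{j=1}^n\bigl(1+|\zeta_j\circ\psi|^2\bigr),
\]
the second being immediate from expanding the product. Taking logarithms yields, for each $j_0$,
\[
\log\bigl(1+|\zeta_{j_0}\circ\psi|^2\bigr)\;\le\;\log\!\Bigl(\sum_{j=0}^n|\zeta_j\circ\psi|^2\Bigr)\;\le\;\sum_{j=1}^n\log\bigl(1+|\zeta_j\circ\psi|^2\bigr).
\]

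Next I would integrate this chain against the harmonic measure $d\pi_o^r$ on $S_o(r)$, then multiply by $1/2$ and subtract the corresponding value at $o$. Using the two displayed formulas for $\widehat{T}_\psi(r,\omega_{FS})$ and $\widehat{T}_{\zeta_j\circ\psi}(r,\omega_{FS})$ right above the theorem, the integrated inequalities become
\[
\max_{1\le j\le n}\widehat{T}_{\zeta_j\circ\psi}(r,\omega_{FS})+O(1)\;\le\;\widehat{T}_\psi(r,\omega_{FS})\;\le\;\sum_{j=1}^n\widehat{T}_{\zeta_j\circ\psi}(r,\omega_{FS})+O(1),
\]
where the $O(1)$ is produced purely by the difference between $\log(\sum_{j=0}^n|\zeta_j\circ\psi(o)|^2)$ and $\sum_{j}\log(1+|\zeta_j\circ\psi(o)|^2)$ (respectively $\max_j$ of the latter), and hence is a fixed constant depending only on $\psi(o)$.

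Finally I would invoke the identity $T(r,\psi)=\widehat{T}_\psi(r,\omega_{FS})+O(1)$ from (\ref{goed}), applied separately to each meromorphic function $\zeta_j\circ\psi:M\to\mathbb{P}^1(\mathbb{C})$, to replace every $\widehat{T}_{\zeta_j\circ\psi}(r,\omega_{FS})$ by $T(r,\zeta_j\circ\psi)$ up to bounded terms, thereby obtaining the stated double inequality. There is no real obstacle here: the argument is essentially the pointwise comparison of $\log(1+\|\cdot\|^2)$ with $\sum\log(1+|\cdot|^2)$, and the only bookkeeping is to keep track of the constant terms at $o$ and to verify that they absorb into the advertised $O(1)$.
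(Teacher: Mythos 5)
Your proof is correct and follows essentially the same route as the paper: both rest on the Dynkin/harmonic-measure representations of $\widehat{T}_\psi$ and $\widehat{T}_{\zeta_j\circ\psi}$ displayed just before the theorem, combined with the pointwise squeeze $\max_j(1+|\zeta_j|^2)\le\sum_{j=0}^n|\zeta_j|^2\le\prod_j(1+|\zeta_j|^2)$ (using $\zeta_0\equiv1$) and the identification $T(r,\zeta_j\circ\psi)=\widehat{T}_{\zeta_j\circ\psi}(r,\omega_{FS})+O(1)$ from (\ref{goed}). The paper leaves the pointwise inequality implicit while you spell it out, but there is no substantive difference.
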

\begin{proof}
On the one hand, 
\begin{eqnarray*}
% \nonumber to remove numbering (before each equation)
  &&\hat{T}_\psi(r,\omega_{FS})  \\
  &\leq& \frac{1}{2}\sum_{j=1}^n\Big{(}\int_{S_o(r)}\log\big{(}1+|\zeta_{j}\circ\psi(x)|^2\big{)}d\pi^r_o(x)-\log\big{(}1+|\zeta_{j}\circ\psi(o)|^2\big{)}\Big{)}+O(1) \\
  &=& \sum_{j=1}^nT(r,\zeta_{j}\circ\psi)+O(1).
  \end{eqnarray*}
On the other hand, 
\begin{eqnarray*}
% \nonumber to remove numbering (before each equation)
 T(r,\zeta_{j}\circ \psi)
 &=&\hat{T}_{\zeta_{j}\circ\psi}(r,\omega_{FS})+O(1)     \\
 &\leq& \frac{1}{4}\int_{B_o(r)}g_r(o,x)\Delta_M\log\Big{(}\sum_{j=0}^n|\zeta_{j}\circ\psi(x)|^2\Big{)}dV(x)+O(1) \\
  &=& \hat{T}_{\psi}(r,\omega_{FS})+O(1).
\end{eqnarray*}
We conclude the proof.
\end{proof}

\begin{cor}\label{seea} We have
$$\max_{1\leq j\leq n}T(r,\zeta_{j}\circ\psi)\leq T_\psi(r,\omega_{FS})+O(1).$$
\end{cor}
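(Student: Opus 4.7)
The plan is to derive Corollary~\ref{seea} as an immediate consequence of Theorem~\ref{v000}, combined with the pointwise comparison $\widehat{T}_\psi(r, \omega_{FS}) \le T_\psi(r, \omega_{FS})$ already displayed in the paragraph preceding Theorem~\ref{v000}.

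First I would invoke the lower-bound half of Theorem~\ref{v000}: for each $j \in \{1, \dots, n\}$,
$$T(r, \zeta_j \circ \psi) \le \widehat{T}_\psi(r, \omega_{FS}) + O(1).$$
Since the right-hand side is independent of $j$, taking the maximum over $j$ on the left preserves the inequality. Then I would substitute the domination $\widehat{T}_\psi(r, \omega_{FS}) \le T_\psi(r, \omega_{FS})$. This domination rests on the reduced-coordinate identity
$$\log\Bigl(\sum_{j=0}^n |\zeta_j \circ \psi|^2\Bigr) = \log \|\psi\|^2 - \log |\psi_0 \circ \psi|^2,$$
together with the fact that $\log|\psi_0 \circ \psi|^2$ is plurisubharmonic, so its distributional Laplacian contributes nonnegatively to $\Delta_M \log \|\psi\|^2$ and is precisely what is dropped in passing from $T_\psi(r,\omega_{FS})$ to $\widehat{T}_\psi(r,\omega_{FS})$.

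I do not expect any substantive obstacle: the corollary is a one-step repackaging of Theorem~\ref{v000}. The only noteworthy point to record in the write-up is that the normalization gap between $\widehat{T}_\psi$ and $T_\psi$ is controlled by the nonnegative counting-type contribution from the hyperplane $\{w_0 = 0\}$, which is exactly what the indicated inequality encodes; no finite-measure exceptional set is needed since all manipulations are pointwise in $r$.
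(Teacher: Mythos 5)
Your argument is exactly the intended one: the paper leaves Corollary~\ref{seea} without a written proof because it is the chain $\max_j T(r,\zeta_j\circ\psi)\le\widehat{T}_\psi(r,\omega_{FS})+O(1)\le T_\psi(r,\omega_{FS})+O(1)$, the first inequality from Theorem~\ref{v000} and the second from the displayed comparison preceding it. The only thing to tidy is a notational slip: in the identity $\log\bigl(\sum_{j=0}^n|\zeta_j\circ\psi|^2\bigr)=\log\|\psi\|^2-\log|\psi_0|^2$ the subtracted term should be $\log|\psi_0|^2$ (the first component of a reduced local representation of $\psi$), not $\log|\psi_0\circ\psi|^2$, since $\psi_0$ is already a function on $M$; the subharmonicity of $\log|\psi_0|^2$ is indeed what makes the dropped Laplacian contribution nonnegative.
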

 Let  $V$ be a complex projective algebraic variety
and  $\mathbb{C}(V)$ be the field of rational functions defined on $V$ over $\mathbb C.$
 Let $V\hookrightarrow \mathbb P^N(\mathbb C)$ be a holomorphic embedding, and let $H_V$ be the restriction of hyperplane line bundle $H$ over $\mathbb P^N(\mathbb C)$ to $V.$
Denote by $[w_0:\cdots:w_{N}]$ the homogeneous coordinate system of $\mathbb P^N(\mathbb C)$ and assume that $w_0\neq0$ without loss of generality. Notice that the restriction $\{\zeta_j:=w_j/w_0\}$ to $V$ gives a transcendental base of $\mathbb{C}(V).$ Thereby, any $\phi\in\mathbb C(V)$ can be represented by a rational function in $\zeta_1,\cdots,\zeta_N$
$$ \phi=Q(\zeta_1,\cdots,\zeta_N).
$$
\begin{theorem}\label{a00a} Let $f:M\rightarrow V$ be an algebraically  non-degenerate meromorphic mapping. Then for  $\phi\in\mathbb C(V),$  there is  a constant $C>0$ depending on $\phi$ such that
$$ T(r,\phi\circ f)\leq CT_f(r,H_V)+O(1).$$
\end{theorem}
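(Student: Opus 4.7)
The plan is to reduce the statement, via the affine coordinates on $\mathbb P^N(\mathbb C)$, to Corollary \ref{seea}, and to exploit the standard subadditivity of the Nevanlinna characteristic under sums, products, and reciprocals.

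First I would lift the situation to the ambient projective space: let $\iota:V\hookrightarrow\mathbb P^N(\mathbb C)$ be the given embedding and set $F=\iota\circ f:M\to\mathbb P^N(\mathbb C)$. Since $H_V=\iota^*H$ and $\omega_{FS}$ represents $c_1(H)$, a metric computation gives $T_F(r,\omega_{FS})=T_f(r,H_V)+O(1)$. The algebraic non-degeneracy of $f$ ensures that $F(M)\not\subset\{w_0=0\}$, so each $\zeta_j\circ F=w_j\circ F/w_0\circ F$ is a well-defined meromorphic function on $M$, and that $\phi\circ f$ makes sense as a meromorphic function on $M$ for any $\phi\in\mathbb C(V)$.

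Next I would use the representation $\phi=P(\zeta_1,\ldots,\zeta_N)/Q(\zeta_1,\ldots,\zeta_N)$ with $P,Q\in\mathbb C[\zeta_1,\ldots,\zeta_N]$. The key tool is the collection of elementary inequalities for the Nevanlinna characteristic,
\begin{equation*}
T(r,\psi_1+\psi_2)\leq T(r,\psi_1)+T(r,\psi_2)+O(1),\qquad T(r,\psi_1\psi_2)\leq T(r,\psi_1)+T(r,\psi_2)+O(1),
\end{equation*}
together with $T(r,1/\psi)=T(r,\psi)+O(1)$. The first two follow at once from $\log^+|\psi_1+\psi_2|\leq\log^+|\psi_1|+\log^+|\psi_2|+\log 2$ and $\log^+|\psi_1\psi_2|\leq\log^+|\psi_1|+\log^+|\psi_2|$ (for the proximity part) and the pole-divisor inequalities $N(r,\psi_1+\psi_2)\leq N(r,\psi_1)+N(r,\psi_2)$, $N(r,\psi_1\psi_2)\leq N(r,\psi_1)+N(r,\psi_2)$ (for the counting part), all of which extend verbatim from the classical case since our $m$ and $N$ are linear integrals against $d\pi_o^r$ and $g_r(o,\cdot)$. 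The reciprocal identity follows from the FMT applied at $0$ and $\infty$: $T(r,\psi)=\widehat m_\psi(r,\infty)+\widehat N_\psi(r,\infty)+O(1)=\widehat m_\psi(r,0)+\widehat N_\psi(r,0)+O(1)=T(r,1/\psi)+O(1)$.

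Applying these inequalities, each monomial $\zeta_1^{a_1}\cdots\zeta_N^{a_N}\circ F$ of total degree at most $d$ satisfies $T(r,\zeta_1^{a_1}\cdots\zeta_N^{a_N}\circ F)\leq d\max_j T(r,\zeta_j\circ F)+O(1)$, and summing over the (finitely many) monomials of $P$ and $Q$ yields
\begin{equation*}
T(r,\phi\circ f)\leq T(r,P\circ F)+T(r,Q\circ F)+O(1)\leq C_\phi\max_{1\leq j\leq N}T(r,\zeta_j\circ F)+O(1),
\end{equation*}
with $C_\phi$ depending only on the degrees and number of monomials of $P$ and $Q$. Corollary \ref{seea} then gives $\max_j T(r,\zeta_j\circ F)\leq T_F(r,\omega_{FS})+O(1)=T_f(r,H_V)+O(1)$, completing the proof.

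The only non-routine step is verifying that the classical subadditivity inequalities and the identity $T(r,1/\psi)=T(r,\psi)+O(1)$ indeed transfer to the K\"ahler setting; this is where one must check that the probabilistic/Green-function formulation of $m$ and $N$ introduced in Section 3 preserves the pointwise comparisons on proximity and the divisor-level comparisons on counting. This is the main bookkeeping obstacle, but no new analytic ingredient is required beyond the FMT (Theorem \ref{first2}) already established.
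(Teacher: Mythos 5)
Your proposal is correct and follows essentially the same route as the paper: both reduce $\phi\circ f$ to the affine coordinates $\zeta_j\circ f$ via the representation $\phi=Q(\zeta_1,\ldots,\zeta_N)$ and then invoke Corollary \ref{seea} to bound $T(r,\zeta_j\circ f)$ by $T_f(r,H_V)$. The paper compresses the bookkeeping into a single line (``Since $Q_j$ is rational, there is a constant $C'>0$ such that $T(r,\phi\circ f)\leq C'\sum_j T(r,\zeta_j\circ f)+O(1)$''), whereas you unpack it into explicit subadditivity lemmas for $T$ under sums, products, and reciprocals and verify that they transfer to the K\"ahler setting---a legitimate and useful elaboration of the same argument.
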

\begin{proof}
Assume that $w_0\circ f\not\equiv0$ without loss of generality.
Since $Q_j$ is rational,   there is  constant $C'>0$ such that
$T(r,\phi\circ f)\leq C'\sum_{j=1}^N T(r,\zeta_j\circ f)+O(1).$
      By Corollary \ref{seea},
 $ T(r,\zeta_j\circ f)
      \leq T_f(r,H_V)+O(1).$ This proves the theorem.
\end{proof}
\begin{cor}\label{49} Let $f:M\rightarrow V$ be an algebraically non-degenerate meromorphic mapping. Fix  a positive  $(1,1)$-form $\omega$ on $V.$
Then for any $\phi\in\mathbb C(V),$  there is a constant $C>0$ depending on $\phi$  such that
$$T(r,\phi\circ f)\leq CT_{f}(r,\omega)+O(1).$$
\end{cor}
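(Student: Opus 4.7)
The plan is to reduce the statement to Theorem \ref{a00a} by comparing $T_f(r,H_V)$ with $T_f(r,\omega)$. The key observation is that both characteristic functions are integrals against $f^{*}(\cdot)\wedge\alpha^{m-1}$ weighted by the Green function, so they are monotone and linear in the $(1,1)$-form on $V$.

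First, I would fix a Hermitian metric $h$ on the positive line bundle $H_V\to V$ whose Chern form $\omega_{H}:=-dd^c\log h$ is a positive $(1,1)$-form on $V$; this exists because $H_V$ is the restriction of the hyperplane bundle under the embedding $V\hookrightarrow \PP^N(\CC)$ and therefore ample. Then by definition
$$T_f(r,H_V)=T_f(r,\omega_H)+O(1).$$

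Next, since $V$ is compact and both $\omega_H$ and $\omega$ are smooth positive $(1,1)$-forms on $V$, there is a constant $C_0>0$ such that $\omega_H\leq C_0\,\omega$ pointwise on $V$. Pulling back by $f$ and wedging with the nonnegative $(m-1,m-1)$-form $\alpha^{m-1}$ preserves this inequality, so
$$f^*\omega_H\wedge \alpha^{m-1}\leq C_0\, f^*\omega\wedge\alpha^{m-1}$$
as $(m,m)$-forms on $M\setminus I_f$. Integrating against the nonnegative weight $g_r(o,x)$ on $B_o(r)$ gives
$$T_f(r,\omega_H)\leq C_0\, T_f(r,\omega),$$
and hence $T_f(r,H_V)\leq C_0\,T_f(r,\omega)+O(1)$.

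Finally, Theorem \ref{a00a} provides a constant $C'>0$ (depending on $\phi$) such that
$$T(r,\phi\circ f)\leq C'\,T_f(r,H_V)+O(1).$$
Combining the two estimates and setting $C=C'C_0$ yields
$$T(r,\phi\circ f)\leq C\,T_f(r,\omega)+O(1),$$
as desired. The only real content is the pointwise comparison $\omega_H\leq C_0\omega$ via compactness of $V$; the rest is definitional. I do not expect any significant obstacle, since $f$ being algebraically non-degenerate is already assumed, so Theorem \ref{a00a} applies without further work.
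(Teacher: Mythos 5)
Your proof is correct and is precisely the expansion of the paper's own one-line argument, which simply states that ``the compactness of $V$ and Theorem \ref{a00a} implies the assertion.'' You correctly identify that the content of ``compactness'' here is the pointwise comparison of the two positive $(1,1)$-forms, and you correctly justify that wedging a nonnegative $(1,1)$-form with $\alpha^{m-1}$ and integrating against the nonnegative Green function preserves the inequality.
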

\begin{proof} The compactness of $V$ and Theorem \ref{a00a}  implies  the assertion.
\end{proof}

\subsection{Estimate of $\mathbb E_o[\tau_r]$}~

We let $M$ be a simply-connected complete K\"ahler manifold of non-positive sectional curvature, and let $X_t$ be the Brownian motion in $M$ with generator $\Delta_M/2$ started at $o.$ Recall that $\dim_{\mathbb C}M=m,$ $\tau_r=\inf\{t>0:X_t\not\in B_o(r)\}.$
\begin{lemma}\label{yyy} We have
$$\mathbb E_o\big{[}\tau_r\big{]}\leq\frac{2r^2}{2m-1}.$$
\end{lemma}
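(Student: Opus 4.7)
The plan is to apply Dynkin's formula to $u(x)=r(x)^2$ combined with the Laplacian comparison available on Cartan--Hadamard manifolds, and read off $\mathbb E_o[\tau_r]$ directly. Since $M$ is simply-connected, complete, of non-positive sectional curvature, and of real dimension $2m$, the exponential map $\exp_o$ is a global diffeomorphism, so $r(x)=d(o,x)$ is smooth on $M\setminus\{o\}$ and $r^2=\|\exp_o^{-1}(\cdot)\|^2$ extends to a $\mathscr C^2$ function on all of $M$. The Hessian comparison theorem gives $\Delta_M r\ge (2m-1)/r$ on $M\setminus\{o\}$, and hence
$$\Delta_M r^2 \;=\; 2r\,\Delta_M r+2\,\|\nabla_M r\|^2 \;\ge\; 2(2m-1)+2 \;=\; 4m$$
there, and by continuity on all of $M$.

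Dynkin's formula applied to $r^2$ at the stopping time $\tau_r$ then reads
$$\mathbb E_o\!\left[r^2(X_{\tau_r})\right] - r^2(o) \;=\; \tfrac12\mathbb E_o\!\left[\int_0^{\tau_r}\Delta_M r^2(X_t)\,dt\right].$$
The left side equals $r^2$ because $X_{\tau_r}\in S_o(r)$ almost surely and $r(o)=0$, while the lower bound on $\Delta_M r^2$ makes the right side at least $2m\,\mathbb E_o[\tau_r]$. Rearranging yields the desired estimate $\mathbb E_o[\tau_r]\le r^2/(2m)$.

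In the Bessel-process language introduced just before the lemma, the same bound arises as follows: the radial process $\rho_t:=r(X_t)$ admits the It\^o decomposition $d\rho_t=d\beta_t+\tfrac12\Delta_M r(X_t)\,dt$ for a one-dimensional Brownian motion $\beta_t$, and the drift bound $\Delta_M r\ge(2m-1)/r$ combined with the classical pathwise SDE comparison theorem shows that $\rho_t$ dominates a $2m$-dimensional Bessel process $W_t$ started at $0$; the explicit identity $\mathbb E[\inf\{t:W_t=r\}]=r^2/(2m)$, obtained by It\^o on $\|B_t^{2m}\|^2$, recovers the same estimate. The one technicality in either approach is the singularity of $r$ at $o$, and it is harmless: $r^2$ is $\mathscr C^2$ globally on a Cartan--Hadamard manifold, and in the Bessel picture $\{o\}$ is polar in real dimension $\ge 2$, so $X_t\ne o$ for all $t>0$ almost surely. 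I expect no substantive obstacle beyond invoking Hessian comparison correctly and justifying the extension of the pointwise bound $\Delta_M r^2\ge 4m$ from $M\setminus\{o\}$ to all of $M$.
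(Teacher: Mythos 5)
Your primary argument is correct and is genuinely more direct than the paper's. The paper applies the It\^o formula to the radial coordinate $\rho_t=r(X_t)$, invokes Hessian comparison and simple connectedness to get the drift bound $\Delta_M r\geq(2m-1)/r$ and $L_t\equiv 0$, then uses a pathwise SDE comparison theorem to dominate $\rho_t$ below by a $2m$-dimensional Bessel process $W_t$, and finally computes $\mathbb E[\iota_r]=r^2/(2m)$ for the Bessel hitting time by applying Dynkin to $W_t^2=\|B^{2m}_t\|^2$ in $\mathbb R^{2m}$. You instead apply Dynkin directly to $u=r^2$ on $M$ itself: since $r^2$ is globally $\mathscr C^2$ on a Cartan--Hadamard manifold and $\Delta_M r^2=2r\Delta_M r+2\|\nabla_M r\|^2\geq 4m$ by the same comparison theorem, Dynkin at $T=\tau_r$ gives $r^2\geq 2m\,\mathbb E_o[\tau_r]$ in one line. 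This bypasses the Bessel process and the SDE comparison argument entirely, at the cost of nothing; both routes rest on the identical geometric input (no cut locus, Hessian comparison). Your secondary paragraph is essentially the paper's proof, so you have both. One small technicality worth recording either way: to apply Dynkin at the stopping time $\tau_r$ one should first apply it at $\tau_r\wedge n$ and let $n\to\infty$ by monotone convergence, which simultaneously establishes that $\mathbb E_o[\tau_r]<\infty$; the paper glosses over this point as well, so it is not a defect specific to your proposal.
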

\begin{proof}   The argument follows essentially from  Atsuji \cite{atsuji}, but here we provide a simpler proof though a rougher estimate.  We refer the reader to \cite{atsuji} for a better estimate that $\mathbb E_o[\tau_r]\leq r^2/2m.$ 
Let $X_t$ be the Brownian motion in $M$ started at $o\not=o_1,$ where $o_1\in B_o(r).$  Let  $r_1(x)$ be  the distance function  of $x$ from $o_1.$
Apply It\^o formula to $r_1(x)$
\begin{equation}\label{kiss}
  r_1(X_t)-r_1(X_0)=B_t-L_t+\frac{1}{2}\int_0^t\Delta_Mr_1(X_s)ds,
\end{equation}
here $B_t$ is the standard Brownian motion in $\mathbb R,$ and $L_t$ is a local time on cut locus of $o,$ an increasing process
which increases only at cut loci of $o.$ Since $M$ is simply connected and  non-positively  curved, then
$$\Delta_Mr_1(x)\geq\frac{2m-1}{r_1(x)}, \ \ L_t\equiv0.$$
By (\ref{kiss}), we arrive at
$$r_1(X_t)\geq B_t+\frac{2m-1}{2}\int_0^t\frac{ds}{r_1(X_s)}.$$
Let $t=\tau_r$ and take expectation on both sides of the above inequality, then it yields that 
$$\max_{x\in S_o(r)} r_1(x)\geq \frac{(2m-1)\mathbb E_o[\tau_r]}{2\max_{x\in S_o(r)} r_1(x)}.$$
Let $o'\rightarrow o,$ 
we are led to the conclusion.\end{proof}

\subsection{Second Main Theorem}~

Let $M$ be a complete K\"ahler manifold of non-positive sectional curvature. 
Consider  the (analytic) universal covering $$\pi:\tilde{M}\rightarrow M.$$ Via the pull-back by $\pi,$ $\tilde{M}$ can be equipped with the induced metric
 from the
metric of $M.$ So, under this metric, $\tilde{M}$ becomes a simply-connected complete K\"ahler manifold of non-positive sectional curvature.
Take a diffusion process $\tilde{X}_t$ in $\tilde{M}$  such that $X_t=\pi(\tilde{X}_t),$ here $X_t$ is the Brownian motion started at $o\in M,$ then
$\tilde{X}_t$ is a Brownian motion generated by $\Delta_{\tilde{M}}/2$ induced from the pull-back metric. Let $\tilde{X}_t$ start at $\tilde{o}\in\tilde{M}$  with $o=\pi(\tilde{o}).$ Then 
$$\mathbb E_o[\phi(X_t)]=\mathbb E_{\tilde{o}}\big{[}\phi\circ\pi(\tilde{X}_t)\big{]}$$
for $\phi\in \mathscr{C}_{\flat}(M).$ Set $$\tilde{\tau}_r=\inf\big{\{}t>0: \tilde{X}_t\not\in B_{\tilde{o}}(r)\big{\}},$$ where
$B_{\tilde{o}}(r)$ is a geodesic ball centered at $\tilde{o}$ with radius $r$ in $\tilde{M}.$
 If necessary, one can extend the filtration in probability space where $(X_t,\mathbb P_o)$ are defined so that $\tilde{\tau}_r$ is a stopping time with
 respect to a filtration where the stochastic calculus of $X_t$ works.
By the above arguments, we may assume $M$ is simply connected by lifting $f$ to the universal covering.

Let  $V$ be a complex projective algebraic manifold with complex dimension $n\leq m=\dim_{\mathbb C}M,$
 and let $L\rightarrow V$ be a holomorphic line bundle. Let a divisor $D\in |L|$ be of simple normal crossing type,  one can express
 $D=\sum_{j=1}^qD_j$ as the union of irreducible components.
 Equipping $L_{D_j}$ with Hermitian
metric which then induces a natural Hermitian metric $h$ on $L=\otimes_{j=1}^qL_{D_j}.$
Fixing a Hermitian metric form $\omega$ on $V$, which gives  a (smooth) volume form $\Omega:=\omega^n$ on $V.$
Pick $s_j\in H^0(V,L_{D_j})$
with $(s_j)=D_j$ and $\|s_j\|<1.$
On $V$, one defines a singular volume form
\begin{equation}\label{1phi}
  \Phi=\frac{\Omega}{\prod_{j=1}^q\|s_j\|^2}.
\end{equation}
Set
$$\xi\alpha^m=f^*\Phi\wedge\alpha^{m-n}.$$
Note that
$$\alpha^m=m!\det(g_{i\bar j})\bigwedge_{j=1}^m\frac{\sqrt{-1}}{2\pi}dz_j\wedge d\bar z_j.$$
A direct computation leads to
$$dd^c\log\xi\geq f^*c_1(L,h)-f^*{\rm{Ric}} \Omega+\mathscr{R}_M-{\rm{Supp}}f^*D$$
in the sense of currents, where $\mathscr{R}_M=-dd^c\log\det(g_{i\bar j}).$
This follows that
\begin{eqnarray}\label{5q}
&&\frac{1}{4}\int_{B_o(r)}g_r(o,x)\Delta_M\log\xi(x) dV(x) \\
&\geq& T_f(r,L)+T_f(r,K_V)+T(r,\mathscr{R}_M)-\overline{N}_f(r,D)+O(1). \nonumber
\end{eqnarray}

 We now prove Theorem \ref{second}:
\begin{proof}
By  Ru-Wong's arguments (see \cite{ru}, Page 231-233), the simple normal crossing type of $D$ implies that
there exists a finite  open covering $\{U_\lambda\}$ of $V$ together with rational functions
$w_{\lambda1},\cdots,w_{\lambda n}$ on $V$ for  $\lambda$ such that $w_{\lambda1},\cdots$ are holomorphic on $U_\lambda$ as well as
\begin{eqnarray*}
% \nonumber to remove numbering (before each equation)
  dw_{\lambda1}\wedge\cdots\wedge dw_{\lambda n}(y)\neq0, & & \ ^\forall y\in U_{\lambda}, \\
  D\cap U_{\lambda}=\big{\{}w_{\lambda1}\cdots w_{\lambda h_\lambda}=0\big{\}}, && \ ^\exists h_{\lambda}\leq n.
\end{eqnarray*}
In addition, we  can require   $L_{D_j}|_{U_\lambda}\cong U_\lambda\times \mathbb C$ for 
$\lambda,j.$ On  $U_\lambda,$ we get
$$\Phi=\frac{e_\lambda}{|w_{\lambda1}|^2\cdots|w_{\lambda h_{\lambda}}|^2}
\bigwedge_{k=1}^n\frac{\sqrt{-1}}{2\pi}dw_{\lambda k}\wedge d\bar w_{\lambda k},$$
where $\Phi$ is given by (\ref{1phi}) and $e_\lambda$ is a smooth positive function.  
Let $\{\phi_\lambda\}$ be a partition of unity subordinate to $\{U_\lambda\},$ then $\phi_\lambda e_\lambda$ is bounded on $V.$ Set 
$$\Phi_\lambda=\frac{\phi_\lambda e_\lambda}{|w_{\lambda1}|^2\cdots|w_{\lambda h_{\lambda}}|^2}
\bigwedge_{k=1}^n\frac{\sqrt{-1}}{2\pi}dw_{\lambda k}\wedge d\bar w_{\lambda k}.$$
Put $f_{\lambda k}=w_{\lambda k}\circ f$, then on  $f^{-1}(U_\lambda)$ we obtain
 \begin{equation}\label{56q}
   f^*\Phi_\lambda=
   \frac{\phi_{\lambda}\circ f\cdot e_\lambda\circ f}{|f_{\lambda1}|^2\cdots|f_{\lambda h_{\lambda}}|^2}
   \bigwedge_{k=1}^n\frac{\sqrt{-1}}{2\pi}df_{\lambda k}\wedge d\bar f_{\lambda k}.
 \end{equation}
 Set
$$f^*\Phi\wedge\alpha^{m-n}=\xi\alpha^m, \ \ \ f^*\Phi_\lambda\wedge\alpha^{m-n}=\xi_\lambda\alpha^m$$
which arrives at  (\ref{5q}). Clearly, we have $\xi=\sum_\lambda \xi_\lambda.$ Again, set
\begin{equation}\label{gtou}
  f^*\omega\wedge\alpha^{m-1}=\varrho\alpha^m
\end{equation}
 which follows that
\begin{equation}\label{d444}
  \varrho=\frac{1}{2m}e_{f^*\omega}.
\end{equation}
For each $\lambda$ and any $x\in f^{-1}(U_{\lambda}),$
take a  local holomorphic coordinate system $z$ around $x.$
Since $\phi_\lambda\circ f\cdot e_\lambda\circ f$ is bounded, it is not very hard to see from (\ref{56q}) and (\ref{gtou}) that $\xi_\lambda$ is
 bounded from above by $P_\lambda,$
where $P_\lambda$ is a polynomial in
$$\varrho, \ \ g^{i\bar{j}}\frac{\partial f_{\lambda k}}{\partial z_i}\overline{\frac{\partial f_{\lambda k}}{\partial z_j}}\Big{/}|f_{\lambda k}|^2, \ \ 1\leq i, j\leq m, \ 1\leq k\leq n.$$
This yields that
\begin{equation}\label{gx}
  \log^+\xi_\lambda\leq O\Big{(}\log^+\varrho+\sum_k\log^+\frac{\|\nabla_M f_{\lambda k}\|}{|f_{\lambda k}|}\Big{)}+O(1).
\end{equation}
Thus, we conclude that
\begin{eqnarray}\label{fill}
% \nonumber to remove numbering (before each equation)
   \log^+\xi
   &\leq& O\Big{(}\log^+\varrho+\sum_{k, \lambda}\log^+\frac{\|\nabla_M f_{\lambda k}\|}{|f_{\lambda k}|}\Big{)}+O(1) 
\end{eqnarray}
on $M.$ On the one hand,
\begin{eqnarray*}
\frac{1}{4}\int_{B_o(r)}g_r(o,x)\Delta_M\log\xi(x) dV(x)
&=&\frac{1}{2}\mathbb E_o\big{[}\log\xi(X_{\tau_r})\big{]}+O(1)
\end{eqnarray*}
due to co-area formula and Dynkin formula. Hence, by (\ref{5q}) we have
\begin{eqnarray}\label{pfirst}
 &&\frac{1}{2}\mathbb E_o\big{[}\log\xi(X_{\tau_r})\big{]} \\
 &\geq& T_f(r,L)+T_f(r,K_V)+T(r,\mathscr{R}_M)  -\overline{N}_f(r,D) +O(1). \nonumber
\end{eqnarray}
On the other hand,
since $f_{\lambda k}$ is the pull-back of rational function $w_{\lambda k}$ on  $V$ by $f$,   Corollary \ref{49} implies that
\begin{equation}\label{hbhb}
  T(r,f_{\lambda k})\leq O(T_f(r,\omega))+O(1).
\end{equation}
Using  (\ref{gx}) and (\ref{hbhb}) with Theorem \ref{log1}, 
\begin{eqnarray*}
% \nonumber to remove numbering (before each equation)
   & & \frac{1}{2}\mathbb E_o\big{[}\log\xi(X_{\tau_r})\big{]} \\
   &\leq& O\Big{(}\sum_{k,\lambda}\mathbb E_o\left[\log^+\frac{\|\nabla_M f_{\lambda k}\|}{|f_{\lambda k}|}(X_{\tau_r})\right]\Big{)}+O\big{(}\mathbb E_o\left[\log^+\varrho(X_{\tau_r})\right]\big{)}+O(1) \\
   &\leq& O\Big{(}\sum_{k,\lambda}m\Big{(}r,\frac{\|\nabla_M f_{\lambda k}\|}{|f_{\lambda k}|}\Big{)}\Big{)}+O\big{(}\log^+\mathbb E_o\left[\varrho(X_{\tau_r})\right]\big{)}+O(1) \\
   &\leq& O\Big{(}\sum_{k,\lambda}\log T(r,f_{\lambda k})+\log C(o,r,\delta)\Big{)}
   +O\big{(}\log^+\mathbb E_o\big{[}\varrho(X_{\tau_r})\big{]}\big{)} \\
   &\leq& O\big{(}\log T_f(r,\omega)+\log C(o,r,\delta)\big{)}+O\big{(}\log^+\mathbb E_o\big{[}\varrho(X_{\tau_r})\big{]}\big{)}.
  \end{eqnarray*}
In the meanwhile, Lemma \ref{calculus} and (\ref{d444}) imply
\begin{eqnarray*}
% \nonumber to remove numbering (before each equation)
\log^+\mathbb E_o\big{[}\varrho(X_{\tau_r})\big{]}
&\leq& (1+\delta)^2\log^+\mathbb E_o\left[\int_0^{\tau_r}\varrho(X_t)dt\right]+\log C(o,r,\delta) \\
   &=& \frac{(1+\delta)^2}{2m}\log^+\mathbb E_o\left[\int_0^{\tau_r}e_{f^*\omega}(X_t)dt\right]+\log C(o,r,\delta) \\
   &\leq& \frac{(1+\delta)^2}{m}\log T_f(r,\omega)+\log C(o,r,\delta).
\end{eqnarray*}
By this with (\ref{pfirst}), we prove the theorem.
\end{proof}
We proceed to prove Theorem \ref{nonpositive}.
\begin{lemma}\label{b0}
Let $\kappa$ be defined by $(\ref{kappa}).$ If $M$ is non-positively curved, then
$$T(r,\mathscr{R}_M)\geq 2m\kappa(r)r^2.$$
\end{lemma}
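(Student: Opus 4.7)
The plan is to unpack $T(r,\mathscr{R}_M)$ as an integral of the Ricci scalar curvature $s_M$ against the Green function, then chain together Lemma \ref{s123}, the co-area formula, and the estimate on $\mathbb{E}_o[\tau_r]$ from Lemma \ref{yyy}, being careful about the sign of $\kappa(r)$.

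First I would compute $e_{\mathscr{R}_M}$. Fix $x\in M$ and choose a normal holomorphic coordinate system at $x$ so that $g_{i\overline{j}}(x)=\delta_{ij}$. Using $\alpha^m=m!\bigwedge_j \frac{\sqrt{-1}}{2\pi}dz_j\wedge d\overline z_j$ at $x$ and the elementary identity $\frac{\sqrt{-1}}{2\pi}dz_j\wedge d\overline z_j\wedge \alpha^{m-1}=\frac{1}{m}\alpha^m$, together with $\mathscr{R}_M=\frac{\sqrt{-1}}{2\pi}\sum_{i,j}R_{i\overline j}dz_i\wedge d\overline z_j$, the off-diagonal terms drop out and one obtains $\mathscr{R}_M\wedge \alpha^{m-1}=\frac{s_M}{m}\alpha^m$ at $x$. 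Hence $e_{\mathscr{R}_M}=2s_M$ everywhere, so
$$T(r,\mathscr{R}_M)=\frac{1}{2}\int_{B_o(r)}g_r(o,x)\,e_{\mathscr{R}_M}(x)\,dV(x)=\int_{B_o(r)}g_r(o,x)\,s_M(x)\,dV(x).$$

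Next I would use Lemma \ref{s123} together with the definition (\ref{kappa}) of $\kappa$. For any $x\in B_o(r)$, $R_M(x)\geq (2m-1)\kappa(r)$, and hence $s_M(x)\geq mR_M(x)\geq m(2m-1)\kappa(r)$. The crucial observation is that, since $M$ is non-positively curved, $R_M\leq 0$ and therefore $\kappa(r)\leq 0$. Integrating the pointwise bound against $g_r(o,\cdot)\geq 0$ gives
$$T(r,\mathscr{R}_M)\geq m(2m-1)\kappa(r)\int_{B_o(r)}g_r(o,x)\,dV(x).$$
Applying the co-area formula (\ref{coa}) with $\phi\equiv 1$ identifies this integral with $\mathbb{E}_o[\tau_r]$.

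Finally I would invoke Lemma \ref{yyy}, which yields $\mathbb{E}_o[\tau_r]\leq \frac{r^2}{2m}$. Since the coefficient $m(2m-1)\kappa(r)$ is non-positive, multiplying by the smaller number $\frac{r^2}{2m}$ actually enlarges the product, giving
$$m(2m-1)\kappa(r)\,\mathbb{E}_o[\tau_r]\;\geq\; m(2m-1)\kappa(r)\cdot\frac{r^2}{2m}=\frac{2m-1}{2}\kappa(r)r^2,$$
which is exactly the claim. The proof is essentially a bookkeeping exercise, and the only subtle step is tracking the sign of $\kappa(r)$; no genuine obstacle arises once $e_{\mathscr{R}_M}=2s_M$ has been identified.
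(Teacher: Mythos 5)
Your proof is correct and follows essentially the same route as the paper: both reduce $T(r,\mathscr{R}_M)$ to $\mathbb{E}_o\bigl[\int_0^{\tau_r}s_M(X_t)\,dt\bigr]$, apply Lemma \ref{s123} with the definition of $\kappa$, and finish with Lemma \ref{yyy} using the non-positivity of $\kappa(r)$. The only difference is cosmetic: you derive the identity $e_{\mathscr{R}_M}=2s_M$ explicitly, whereas the paper invokes the formula $s_M=-\tfrac{1}{4}\Delta_M\log\det(g_{s\overline t})$ from the preliminaries and writes the characteristic function directly in terms of $\Delta_M\log\det(g_{i\overline j})$; the two are the same computation.
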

\begin{proof} Lemma \ref{s123}  implies that
$0\geq s_M\geq mR_M.$
By co-area formula
\begin{eqnarray*}
% \nonumber to remove numbering (before each equation)
T(r,\mathscr{R}_M)&=&-{1\over4}\mathbb E_o\left[\int_0^{\tau_r}\Delta_M\log\det(g_{i\bar{j}}(X_t))dt\right] \\
&=&\mathbb E_o\left[\int_0^{\tau_r}s_{M}(X_t)dt\right] \geq m\mathbb E_o\left[\int_0^{\tau_r}R_M(X_t)dt\right] \\
&\geq& m(2m-1)\kappa(r)\mathbb E_o[\tau_r].
\end{eqnarray*}
 We have $\mathbb E_o[\tau_r]\leq 2r^2/(2m-1)$ by Lemma \ref{yyy}. The proof is completed.
\end{proof}
\begin{proof} With the estimate of  $C(o, r, \delta)$ given by (\ref{esti}) and estimate of  $T(r,\mathscr{R}_M)$ given by Lemma \ref{b0}, Theorem \ref{nonpositive} follows from Theorem \ref{second}.
\end{proof}
 If $M=\mathbb C^m,$ then  $\kappa\equiv0.$ Theorem \ref{nonpositive} implies that
\begin{cor}[Carlson-Griffiths, \cite{gri}; Noguchi, \cite{Nchi}]\label{sd}  Let a divisor $D\in|L|$ be of simple normal
crossing type.
Let $f:\mathbb C^m\rightarrow V$ be a differentiably non-degenerate meromorphic mapping. Then
  \begin{eqnarray*}
  % \nonumber to remove numbering (before each equation)
     T_f(r,L)+T_f(r,K_V)
     \leq \overline{N}_f(r,D)+O\big{(}\log T_f(r,\omega)+\delta\log r\big{)} \ \big{\|}.
  \end{eqnarray*}
\end{cor}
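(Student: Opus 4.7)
The plan is to derive the Corollary as an immediate specialization of Theorem \ref{nonpositive} to the case $M=\mathbb{C}^m$ with its standard Euclidean K\"ahler metric. First I would check that $\mathbb{C}^m$, endowed with the flat metric $\alpha = dd^c\|z\|^2$, satisfies the hypotheses of Theorem \ref{nonpositive}: it is a simply connected, complete K\"ahler manifold of complex dimension $m$, and its sectional curvature vanishes identically, so in particular it is of non-positive sectional curvature. The remaining hypotheses, namely that $f$ is differentiably non-degenerate and that $D\in|L|$ is of simple normal crossing type, are assumed directly.

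The key observation is to evaluate the curvature quantity $\kappa(r)$ from (\ref{kappa}). Since the Euclidean metric on $\mathbb{C}^m$ is flat, the full Riemann tensor vanishes and hence ${\rm{Ric}}_{\mathbb{C}^m}\equiv 0$. Consequently the pointwise lower bound satisfies $R_{\mathbb{C}^m}(x)\equiv 0$ for every $x$, so
$$\kappa(r)=\frac{1}{2m-1}\min_{x\in\overline{B_o(r)}}R_{\mathbb{C}^m}(x)\equiv 0\qquad\text{for all }r\geq 0.$$
In particular, $\kappa(r)r^2\equiv 0$, and the curvature correction term disappears from the error estimate.

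Substituting $\kappa\equiv 0$ into the conclusion of Theorem \ref{nonpositive} yields
$$T_f(r,L)+T_f(r,K_V)\leq\overline{N}_f(r,D)+O\big(\log T_f(r,\omega)+\delta\log r\big)$$
for $r>1$ outside a set $E_\delta\subset(1,\infty)$ of finite Lebesgue measure, which is exactly the assertion. The compatibility of the probabilistic Nevanlinna functions with the classical Carlson-Griffiths definitions is already noted in Remark \ref{remark}, so no reinterpretation is needed. There is essentially no obstacle at this stage: all the analytic and probabilistic work, including the estimates on the Green function, the Logarithmic Derivative Lemma, and the bound on $T(r,\mathscr{R}_M)$ via Lemma \ref{b0} and Lemma \ref{yyy}, has been carried out already in Theorem \ref{nonpositive}, and the present Corollary is merely the observation that $\mathbb{C}^m$ is Ricci-flat so the curvature term drops out.
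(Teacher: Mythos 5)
Your proposal is correct and follows exactly the paper's own route: specialize Theorem \ref{nonpositive} to $M=\mathbb{C}^m$, observe that flatness gives $\kappa\equiv 0$ so the curvature term $-\kappa(r)r^2$ drops out. No further comment is needed.
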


\section{Second Main Theorem for singular divisors}
We extend the Second Main  Theorem for divisors of simply normal crossing type  to  general divisors.
Given a hypersurface $D$  of a complex projective algebraic manifold $V.$ Let $S$ denote the set of the points of $D$ at which $D$ has a
non-normal-crossing singularity. By  Hironaka's resolution of singularities (see \cite{Hir}), there exists a proper modification
$$\tau:\tilde{V}\rightarrow V$$
such that $\tilde{V} \setminus \tilde{S}$ is biholomorphic to $V\setminus S$
 under $\tau,$ and $\tilde{D}$ is only of normal crossing singularities,
  where $\tilde{S}=\tau^{-1}(S)$ and $\tilde{D}=\tau^{-1}(D)$. Let
  $\hat{D}=\overline{\tilde{D}\setminus\tilde{S}}$ be the closure of $\tilde{D}\setminus\tilde{S},$ and   $\tilde{S}_j$ be the irreducible components of $\tilde{S}.$ Put
\begin{equation}\label{71}
  \tau^*D=\hat{D}+\sum p_j\tilde{S}_j=\tilde{D}+\sum(p_j-1)\tilde{S}_j, \  \ R_\tau=\sum q_j\tilde{S}_j,
\end{equation}
 where $R_\tau$ is ramification divisor of $\tau,$ and $p_j,q_j>0$ are integers. Again, set
\begin{equation}\label{90}
  S^*=\sum\varsigma_j\tilde{S}_j, \ \  \varsigma_j=\max\big{\{}p_j-q_j-1,0\big{\}}.
\end{equation}
We endow $L_{S^*}$ with a Hermitian metric $\|\cdot\|$ and take  a holomorphic section $\sigma$ of $L_{S^*}$ with ${\rm{Div}}\sigma=(\sigma)=S^*$ and $\|\sigma\|<1.$
Let
$$f:M\rightarrow V$$
be a meromorphic mapping from a complete K\"ahler manifold $M$ such that $f(M)\not\subset D.$ The \emph{proximity function} of $f$ with respect to the singularities of $D$ is defined by
$$m_f\big{(}r,{\rm{Sing}}(D)\big{)}=\int_{S_o(r)}\log\frac{1}{\|\sigma\circ\tau^{-1}\circ f(x)\|}d\pi^r_o(x).$$
Let $\tilde{f}:M\rightarrow\tilde{V}$ be the lift of $f$ given by $\tau\circ \tilde{f}=f.$ Then, we verify that
\begin{equation}\label{100}
  m_f\big{(}r,{\rm{Sing}}(D)\big{)}=m_{\tilde{f}}(r,S^*)=\sum\varsigma_jm_{\tilde{f}}(r,\tilde{S}_j).
\end{equation}

We now prove  Theorem \ref{second1}:
\begin{proof} We first suppose that $D$ is the union of smooth hypersurfaces, namely, no irreducible component of $\tilde{D}$ crosses itself. Let $E$ be the union of generic hyperplane sections of $V$ so that the set
$A=\tilde{D}\cup E$ has only normal-crossing singularities. By (\ref{71}) with
$K_{\tilde{V}}=\tau^*K_V\otimes L_{R_\tau},$
we have
\begin{equation}\label{73}
  K_{\tilde{V}}\otimes L_{\tilde{D}}=\tau^*K_V\otimes\tau^*L_D\otimes\bigotimes L_{\tilde{S}_j}^{\otimes(1-p_j+q_j)}.
\end{equation}
Applying Theorem \ref{nonpositive} to $\tilde{f}$ for divisor $A,$
 \begin{eqnarray*}
  % \nonumber to remove numbering (before each equation)
     & & T_{\tilde{f}}(r,L_A)+T_{\tilde{f}}(r,K_{\tilde{V}}) \\
     &\leq& \overline{N}_{\tilde{f}}(r,A)+O\big{(}\log T_{\tilde{f}}(r,\tau^*\omega)-r^2\kappa(r)+\delta\log r\big{)}.
  \end{eqnarray*}
 The First Main Theorem implies that
\begin{eqnarray*}
% \nonumber to remove numbering (before each equation)
   T_{\tilde{f}}(r,L_A)&=& m_{\tilde{f}}(r,A)+N_{\tilde{f}}(r,A)+O(1) \\
    &=& m_{\tilde{f}}(r,\tilde{D})+m_{\tilde{f}}(r,E)+N_{\tilde{f}}(r,A)+O(1) \\
    &\geq& m_{\tilde{f}}(r,\tilde{D})+N_{\tilde{f}}(r,A)+O(1) \\
    &=& T_{\tilde{f}}(r,L_{\tilde{D}})-N_{\tilde{f}}(r,\tilde{D})+N_{\tilde{f}}(r,A)+O(1),
\end{eqnarray*}
which leads to
$$T_{\tilde{f}}(r,L_A)-\overline{N}_{\tilde{f}}(r,A)\geq  T_{\tilde{f}}(r,L_{\tilde{D}})
-\overline{N}_{\tilde{f}}(r,\tilde{D})+O(1).$$
Combining $T_{\tilde{f}}(r,\tau^*\omega)=T_{f}(r,\omega)$ and $\overline{N}_{\tilde{f}}(r,\tilde{D})=\overline{N}_{f}(r,D)$ with the above,
\begin{eqnarray}\label{75}
  % \nonumber to remove numbering (before each equation)
     & & T_{\tilde{f}}(r,L_{\tilde{D}})+T_{\tilde{f}}(r,K_{\tilde{V}})\\
     &\leq& \overline{N}_{\tilde{f}}(r,\tilde{D})+O\big{(}\log T_{f}(r,\omega)-r^2\kappa(r)+\delta\log r\big{)}. \nonumber
  \end{eqnarray}
 It yields from  (\ref{73}) that
\begin{eqnarray}\label{76}
% \nonumber to remove numbering (before each equation)
   && T_{\tilde{f}}(r,L_{\tilde{D}})+T_{\tilde{f}}(r,K_{\tilde{V}}) \\
   &=& T_{\tilde{f}}(r,\tau^*L_D)+T_{\tilde{f}}(r,\tau^*K_{V})+\sum(1-p_j+q_j)T_{\tilde{f}}(r,L_{\tilde{S}_j}) \nonumber \\
    &=& T_{f}(r,L_D)+T_{f}(r,K_{V})+\sum(1-p_j+q_j)T_{\tilde{f}}(r,L_{\tilde{S}_j}). \nonumber
\end{eqnarray}
Since $N_{\tilde{f}}(r,\tilde{S})=0,$  it follows from (\ref{90}) and (\ref{100}) that
\begin{eqnarray}\label{77}
% \nonumber to remove numbering (before each equation)
    \sum(1-p_j+q_j)T_{\tilde{f}}(r,L_{\tilde{S}_j})
   &=&\sum(1-p_j+q_j)m_{\tilde{f}}(r,\tilde{S}_j)+O(1)  \\
   &\leq& \sum\varsigma_jm_{\tilde{f}}(r,\tilde{S}_j)+O(1) \nonumber \\
   &=& m_{f}\big{(}r,{\rm{Sing}}(D)\big{)}+O(1). \nonumber
\end{eqnarray}
Combining (\ref{75})-(\ref{77}), we show the theorem.

To prove the general case, according to the above proved, one only needs to verify this claim for an arbitrary  hypersurface $D$ of normal-crossing type. Note  by the arguments
in [\cite{Shiff}, Page 175] that there is a proper  modification $\tau:\tilde{V}\rightarrow V$ such that $\tilde{D}=\tau^{-1}(D)$ is only the union of a collection of smooth hypersurfaces of
 normal crossings. Thus,
$m_f(r,{\rm{Sing}}(D))=0.$ By the special case of this theorem proved, the claim  holds for $D$ by using Theorem \ref{nonpositive}.
\end{proof}
\begin{cor}[Shiffman, \cite{Shiff}]\label{d123} Let $D\subset V$ be an ample hypersurface.
Let $f:\mathbb C^m\rightarrow V$ be a differentiably non-degenerate meromorphic mapping. Then
  \begin{eqnarray*}
  % \nonumber to remove numbering (before each equation)
     & & T_f(r,L_D)+T_f(r,K_V) \\
     &\leq& \overline{N}_f(r,D)+m_f\big{(}r,{\rm{Sing}}(D)\big{)} +O\big{(}\log T_f(r,L_D)+\delta\log r\big{)} \ \big{\|}.
  \end{eqnarray*}
\end{cor}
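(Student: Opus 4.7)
The corollary is essentially the specialization of Theorem \ref{second1} to the Euclidean source $M = \mathbb{C}^m$, combined with the ampleness hypothesis on $D$. My plan is to invoke Theorem \ref{second1} directly and then simplify the two terms on the right-hand side that depend on $M$ and on the auxiliary Hermitian form $\omega$.

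First, I would observe that $M = \mathbb{C}^m$ is flat, so the pointwise Ricci lower bound $R_M$ vanishes identically. Hence the function $\kappa$ defined in \eqref{kappa} satisfies $\kappa(r) \equiv 0$, and the term $-\kappa(r)r^{2}$ in the error of Theorem \ref{second1} disappears. (One also notes that $\mathbb{C}^m$ is simply connected, complete, and non-positively curved, so the hypotheses needed to apply Theorem \ref{second1} at this level of precision are automatic.)

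Next, I would exploit the ampleness of $D$. Since $D$ is an ample hypersurface of $V$, the associated line bundle $L_D$ is positive, so one may choose a Hermitian metric $h$ on $L_D$ whose Chern form $\omega := c_1(L_D, h)$ is a positive $(1,1)$-form on $V$. With this particular choice of $\omega$, the characteristic $T_f(r,\omega)$ agrees with $T_f(r, L_D)$ up to $O(1)$ by the definition given in Section~3. Consequently the error term $O(\log T_f(r,\omega))$ appearing in Theorem \ref{second1} can be rewritten as $O(\log T_f(r, L_D))$.

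Substituting these two simplifications into the conclusion of Theorem \ref{second1} yields exactly
\begin{eqnarray*}
T_f(r,L_D) + T_f(r,K_V) &\leq& \overline{N}_f(r,D) + m_f\big(r,{\rm{Sing}}(D)\big) + O\big(\log T_f(r, L_D) + \delta \log r\big)
\end{eqnarray*}
outside a set of finite Lebesgue measure, which is the desired statement. I do not anticipate a serious obstacle here: the only subtlety is checking that the proper modification $\tau: \widetilde V \to V$ and the lifted mapping $\widetilde f$ used internally in the proof of Theorem \ref{second1} still behave well when the source is $\mathbb{C}^m$, but this is automatic since lifting to the universal cover is trivial in this case and $\widetilde f$ is again differentiably non-degenerate into $\widetilde V$.
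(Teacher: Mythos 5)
Your proposal is correct and is essentially the paper's own (implicit) argument: specialize Theorem \ref{second1} to $M=\mathbb{C}^m$ so that $\kappa\equiv 0$, and use ampleness of $L_D$ to choose $\omega=c_1(L_D,h)>0$ as the Hermitian form, making $T_f(r,\omega)=T_f(r,L_D)+O(1)$. The final remark about $\widetilde f$ and the modification $\tau$ is unnecessary for invoking Theorem \ref{second1} as a black box, but it does no harm.
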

\begin{cor}[Defect relation]\label{d73} Assume the same conditions as in Theorem $\ref{second1}.$
 If $f$ satisfies the growth condition
$$ \liminf_{r\rightarrow\infty}\frac{r^{2}\kappa(r)}{T_f(r,\omega)}=0,$$
where $\kappa$ is defined by $(\ref{kappa}),$ then
$$\Theta_f(D)\underline{\left[\frac{c_1(L)}{\omega}\right]}\leq
\overline{\left[\frac{c_1(K^*_V)}{\omega}\right]}+\limsup_{r\rightarrow\infty}\frac{m_f\big{(}r,{\rm{Sing}}(D)\big{)}}{T_f(r,\omega)}.$$
\end{cor}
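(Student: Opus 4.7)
The plan is to parallel the proof of Corollary \ref{defect} from Theorem \ref{nonpositive}, using Theorem \ref{second1} instead and carrying $m_f(r,{\rm{Sing}}(D))$ along as an additive error.

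Write $\underline{A} = \underline{[c_1(L)/\omega]}$ and $B = \overline{[c_1(K_V^*)/\omega]}$. For each $\epsilon>0$, the very definitions allow me to choose Hermitian metrics on $L$ and on $K_V^*$ whose Chern forms are pinched by $(\underline{A}-\epsilon)\omega$ and $(B+\epsilon)\omega$ respectively. Compactness of $V$ makes the resulting characteristic functions agree with $T_f(r,L)$ and $T_f(r,K_V)$ up to $O(1)$, so
$$T_f(r,L)\geq(\underline{A}-\epsilon) T_f(r,\omega)+O(1),\qquad T_f(r,K_V)\geq -(B+\epsilon) T_f(r,\omega)+O(1).$$
Substituting into Theorem \ref{second1} and abbreviating $E(r) = O\bigl(\log T_f(r,\omega)-\kappa(r)r^2+\delta\log r\bigr)$,
$$T_f(r,L)-\overline{N}_f(r,D)\leq (B+\epsilon) T_f(r,\omega)+m_f\bigl(r,{\rm{Sing}}(D)\bigr)+E(r)+O(1)$$
for $r>1$ outside a finite-measure set $E_\delta$.

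Next, let $\gamma(r):=1-\overline{N}_f(r,D)/T_f(r,L)$, which is $\geq -O(1)/T_f(r,L)$ by the Nevanlinna inequality (Corollary \ref{bzda}). Assuming $\underline{A}>\epsilon$ (which covers the non-trivial case $\underline{A}>0$), rewriting $T_f(r,L)-\overline{N}_f(r,D)=\gamma(r)T_f(r,L)$ and using the lower bound on $T_f(r,L)$ yields
$$(\underline{A}-\epsilon)\gamma(r) T_f(r,\omega)\leq (B+\epsilon) T_f(r,\omega)+m_f\bigl(r,{\rm{Sing}}(D)\bigr)+E(r)+O(1).$$
Divide by $T_f(r,\omega)$. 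By the growth hypothesis I can pick $r_k\to\infty$ outside $E_\delta$ with $-\kappa(r_k)r_k^2/T_f(r_k,\omega)\to 0$; the other pieces of $E(r_k)/T_f(r_k,\omega)$ vanish automatically. Taking $\liminf$ along $\{r_k\}$ and using the trivial inequality $\liminf_k \gamma(r_k)\geq \liminf_r \gamma(r)=\Theta_f(D)$,
$$(\underline{A}-\epsilon)\Theta_f(D)\leq(B+\epsilon)+\limsup_{r\to\infty}\frac{m_f\bigl(r,{\rm{Sing}}(D)\bigr)}{T_f(r,\omega)}.$$
Letting $\epsilon\to 0$ gives the claim.

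The subtlety to focus on is the error term $-\kappa(r)r^2$: it is non-negative and need not be $o(T_f(r,\omega))$ for every large $r$, only along the subsequence supplied by the growth hypothesis. This forces the whole asymptotic analysis through that subsequence, and the required observation is the routine fact that $\liminf$ along a subsequence is at least $\liminf$ along the full sequence, which lets the subsequential bound on $\gamma(r_k)$ be transferred to the globally defined defect $\Theta_f(D)=1-\limsup_{r\to\infty}\overline{N}_f(r,D)/T_f(r,L)$.
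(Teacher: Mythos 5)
The paper states Corollary \ref{d73} without proof, so there is no argument of the author's to compare against; what you have written is the natural deduction from Theorem \ref{second1}, and it is correct. Your handling of the two sign-subtleties is sound: by Corollary \ref{bzda}, $\gamma(r)\geq -O(1)/T_f(r,L)$, and since $T_f(r,\omega)\lesssim T_f(r,L)$ once $\underline{A}-\epsilon>0$, the replacement $\gamma(r)T_f(r,L)\geq(\underline{A}-\epsilon)\gamma(r)T_f(r,\omega)-O(1)$ holds even where $\gamma(r)$ dips slightly below zero; and $\liminf$ along a subsequence indeed dominates the full $\liminf=\Theta_f(D)$. One observation that would simplify your presentation: since $r^2\kappa(r)/T_f(r,\omega)\leq 0$, the growth hypothesis $\liminf_{r\to\infty}r^2\kappa(r)/T_f(r,\omega)=0$ already forces the full limit to be $0$, so there is no need to extract a special subsequence from the growth hypothesis and reconcile it with the exceptional set $E_\delta$ — any unbounded sequence avoiding $E_\delta$ works. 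Finally, the claim that the remaining pieces of $E(r_k)/T_f(r_k,\omega)$ "vanish automatically" is correct for $\log T_f(r,\omega)/T_f(r,\omega)$ but the $\delta\log r$ term requires the implicit $\delta\to 0$ step at the end of the argument, which you invoke for $\epsilon$ but should also note for $\delta$.
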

For further consideration of defect relations, we introduce some additional  notations. Let $A$   be a hypersurface of $V$ such that $A\supset S,$
where $S$ is a set of non-normal-crossing singularities of $D$ given before. We write
\begin{equation}\label{e34}
  \tau^*A=\hat{A}+\sum t_j\tilde{S}_j, \ \  \hat{A}=\overline{\tau^{-1}(A)\setminus\tilde{S}}.
\end{equation}
Set
\begin{equation}\label{78}
  \gamma_{A,D}=\max\frac{\varsigma_j}{t_j}
\end{equation}
where $\varsigma_j$ are given by (\ref{90}). Clearly, $0\leq \gamma_{A,D}<1.$
Note from (\ref{e34}) that
$$m_f(r,A)=m_{\tilde{f}}(r,\tau^*A)\geq\sum t_jm_{\tilde{f}}(r,\tilde{S}_j)+O(1).$$
By (\ref{100}), we see that
\begin{equation}\label{710}
   m_{f}\big{(}r,{\rm{Sing}}(D)\big{)}\leq \gamma_{A,D}\sum t_jm_{\tilde{f}}(r,\tilde{S}_j)\leq \gamma_{A,D}m_f(r,A)+O(1).
\end{equation}
\begin{theorem}\label{d555}
Let $L\rightarrow V$ be a holomorphic line bundle, and let $D_1,\cdots,D_q$ $\in |L|$ be hypersurfaces such that any two among  them have no common
components. Let $A\subset V$ be a hypersurface containing the non-normal-crossing singularities of $\sum_{j=1}^q D_j.$
Let $f:M\rightarrow V$ be a differentiably non-degenerate meromorphic mapping. If $f$ satisfies the growth condition
$$ \liminf_{r\rightarrow\infty}\frac{r^{2}\kappa(r)}{T_f(r,\omega)}=0,$$
where $\kappa$ is defined by $(\ref{kappa}),$ then
$$\sum_{j=1}^q\Theta_f(D_j)\underline{\left[\frac{c_1(L)}{\omega}\right]}\leq
\frac{1}{q}\overline{\left[\frac{c_1(K^*_V)}{\omega}\right]}+\frac{\gamma_{A,D}}{q}\overline{\left[\frac{c_1(L_A)}{\omega}\right]}.$$
\end{theorem}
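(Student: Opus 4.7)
The plan is to reduce the statement to Theorem \ref{second1} applied to the aggregated divisor $D:=\sum_{j=1}^{q}D_{j}$. Because $D_{1},\ldots,D_{q}\in|L|$ share no common component, $D$ is a well-defined effective divisor with $L_{D}\cong L^{\otimes q}$, so $T_{f}(r,L_{D})=qT_{f}(r,L)+O(1)$. By hypothesis, the non-normal-crossing singular set of $D$ is contained in $A$. Applying Theorem \ref{second1} then gives, outside a set $E_{\delta}\subset(1,\infty)$ of finite Lebesgue measure,
\[
qT_{f}(r,L)+T_{f}(r,K_{V})-\overline{N}_{f}(r,D)\leq m_{f}\bigl(r,{\rm Sing}(D)\bigr)+O\bigl(\log T_{f}(r,\omega)-\kappa(r)r^{2}+\delta\log r\bigr).
\]

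In this inequality I would then substitute $\overline{N}_{f}(r,D)\leq\sum_{j}\overline{N}_{f}(r,D_{j})$ (from containment of reduced supports) and the singularity estimate $m_{f}(r,{\rm Sing}(D))\leq\gamma_{A,D}\,m_{f}(r,A)+O(1)\leq\gamma_{A,D}T_{f}(r,L_{A})+O(1)$ coming from (\ref{710}) together with the First Main Theorem (Theorem \ref{first2}). This rearranges to an estimate of $qT_{f}(r,L)+T_{f}(r,K_{V})$ from above by $\sum_{j}\overline{N}_{f}(r,D_{j})+\gamma_{A,D}T_{f}(r,L_{A})$, modulo the same error term.

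To conclude, I would pass from characteristic functions to the bracket quantities via the standard comparisons $T_{f}(r,L)\geq\underline{[c_{1}(L)/\omega]}T_{f}(r,\omega)+O(1)$, $T_{f}(r,K_{V})\geq-\overline{[c_{1}(K_{V}^{*})/\omega]}T_{f}(r,\omega)+O(1)$, and $T_{f}(r,L_{A})\leq\overline{[c_{1}(L_{A})/\omega]}T_{f}(r,\omega)+O(1)$, then divide by a suitable characteristic function and take limits along a sequence $r_{n}\to\infty$ that realizes the growth hypothesis $\liminf r^{2}\kappa(r)/T_{f}(r,\omega)=0$ (and, since $|E_{\delta}|<\infty$, lies outside the exceptional set). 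The defect relation then follows from $\sum_{j}\Theta_{f}(D_{j})=q-\sum_{j}\limsup\overline{N}_{f}(r,D_{j})/T_{f}(r,L)$ combined with the subadditivity $\sum\limsup(\cdot)\geq\limsup\sum(\cdot)$. The main obstacle will be bookkeeping: coordinating the exceptional set, the growth subsequence, and the direction of the $\limsup$/$\liminf$ manipulations so that all bounds propagate correctly and the numerical factors (in particular the $1/q$ normalizations) come out exactly as claimed.
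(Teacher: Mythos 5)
Your plan is the same one the paper uses: pass to the aggregate divisor $D:=\sum_{j=1}^{q}D_{j}$, exploit $L_{D}\cong L^{\otimes q}$, invoke the singular-divisor Second Main Theorem (the paper goes through its defect-form Corollary \ref{d73} rather than directly through Theorem \ref{second1}, but these are equivalent routes), use $\overline{N}_{f}(r,D)\leq\sum_{j}\overline{N}_{f}(r,D_{j})$, and control the singularity term by (\ref{710}) together with the First Main Theorem. So your route and the paper's are essentially identical.

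Your flagged worry about the $1/q$ normalization is, however, not mere bookkeeping: it is a genuine obstruction. Carrying out either your outline or the paper's two-line proof gives
\[
\sum_{j=1}^{q}\Theta_{f}(D_{j})\underline{\left[\frac{c_{1}(L)}{\omega}\right]}\leq
\overline{\left[\frac{c_{1}(K_{V}^{*})}{\omega}\right]}+\gamma_{A,D}\,\overline{\left[\frac{c_{1}(L_{A})}{\omega}\right]},
\]
i.e.\ \emph{without} the $1/q$ factors: the factor $q$ produced by $\underline{[c_{1}(L^{\otimes q})/\omega]}=q\,\underline{[c_{1}(L)/\omega]}$ on the left is exactly cancelled when one relates $\Theta_{f}(D)$ (normalized by $T_{f}(r,L^{\otimes q})=qT_{f}(r,L)+O(1)$) to $\sum_{j}\Theta_{f}(D_{j})$ (each normalized by $T_{f}(r,L)$). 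No step in either argument introduces an extra $1/q$. Indeed, already for $M=\mathbb{C}$, $V=\mathbb{P}^{1}$, $L=\mathcal{O}(1)$, $f=\exp$ and $D_{1}=\{0\}$, $D_{2}=\{\infty\}$ (so $q=2$, $A=\emptyset$), one has $\Theta_{f}(D_{1})+\Theta_{f}(D_{2})=2$, which contradicts a bound $\leq\frac{1}{q}\cdot 2=1$. So the $1/q$ in the statement (and likewise in Corollary \ref{shide}) must be a misprint; the inequality you should aim for, and which your outline does prove, is the one displayed above.
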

\begin{proof} By (\ref{710}), we get
$$\sum_{j=1}^q\limsup_{r\rightarrow\infty}\frac{m_f\big{(}r,{\rm{Sing}}(D_j)\big{)}}{T_f(r,\omega)}\leq
 \gamma_{A,D}\overline{\left[\frac{c_1(L_A)}{\omega}\right]}.$$
 Note that $L_{D_1+\cdots+D_q}=L^{\otimes q}.$ By Theorem \ref{d73}, we show the theorem.
\end{proof}

\begin{cor}[Shiffman, \cite{Shiff}]\label{shide}
Let $L\rightarrow V$ be a positive line bundle, and let $D_1,\cdots,D_q\in |L|$ be hypersurfaces such that any two among them have no common
components. Let $A\subset V$ be a hypersurface containing the non-normal-crossing singularities of $\sum_{j=1}^q D_j.$
Let $f:\mathbb C^m\rightarrow V$ be a differentiably non-degenerate meromorphic mapping.
Then
$$\sum_{j=1}^q\Theta_f(D_j)\leq\frac{1}{q}
\overline{\left[\frac{c_1(K^*_V)}{c_1(L)}\right]}+\frac{\gamma_{A,D}}{q}\overline{\left[\frac{c_1(L_A)}{c_1(L)}\right]}.$$
\end{cor}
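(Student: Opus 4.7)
The plan is to specialize Theorem~\ref{d555} to $M=\mathbb{C}^m$ with its flat standard K\"ahler metric. Since $\mathbb{C}^m$ is simply-connected, complete and Ricci-flat, the function $\kappa$ from (\ref{kappa}) is identically zero, so $r^2\kappa(r)\equiv 0$ and the growth hypothesis
$$\liminf_{r\rightarrow\infty}\frac{r^2\kappa(r)}{T_f(r,\omega)}=0$$
in Theorem~\ref{d555} is automatic for every differentiably non-degenerate $f$. This reduces the task to rewriting the defect inequality produced by Theorem~\ref{d555} in the $c_1(L)$-quotient form demanded by the claim.

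Next I would exploit the positivity of $L$ to fix the reference Hermitian form. Choose a Hermitian metric $h$ on $L$ whose Chern curvature $\omega:=-dd^c\log h$ is strictly positive on $V$, and use this $\omega$ as the background form on $V$. Since $\omega\in c_1(L)$, the choice $\omega_L=\omega$ in the sup defining $\underline{[c_1(L)/\omega]}$ gives $\underline{[c_1(L)/\omega]}\geq 1$. Conversely, if some $\omega_L\in c_1(L)$ satisfied $\omega_L>s\omega$ on $V$ with $s>1$, then writing $\omega_L=\omega+dd^c\phi$ would force $dd^c\phi>(s-1)\omega>0$ on compact $V$, contradicting the maximum principle applied to the global smooth function $\phi$. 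Hence $\underline{[c_1(L)/\omega]}=1$, and Theorem~\ref{d555} reduces to
$$\sum_{j=1}^q\Theta_f(D_j)\leq\frac{1}{q}\overline{\left[\frac{c_1(K_V^*)}{\omega}\right]}+\frac{\gamma_{A,D}}{q}\overline{\left[\frac{c_1(L_A)}{\omega}\right]}.$$

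Finally I would pass from the $\omega$-quotients to the $c_1(L)$-quotients appearing in the claim. The inequality above holds for every positive Chern form $\omega\in c_1(L)$, and by definition $\overline{[c_1(L')/c_1(L)]}$ is the infimum of $\overline{[c_1(L')/\omega]}$ as $\omega$ ranges over positive representatives of $c_1(L)$. For every $\varepsilon>0$, one selects a single $\omega\in c_1(L)$ making both quotients simultaneously within $\varepsilon$ of their $c_1(L)$-counterparts; letting $\varepsilon\rightarrow 0$ then yields the corollary. The main obstacle is precisely this joint $\varepsilon$-approximation, since a priori a near-optimal representative of $c_1(L)$ for the $K_V^*$-ratio need not be near-optimal for the $L_A$-ratio. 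The resolution uses the $\partial\bar\partial$-lemma: any two positive representatives of $c_1(L)$ differ by $dd^c\phi$ for a smooth $\phi$ on the compact $V$, so a suitable convex combination of individually near-optimal representatives can be mixed into a single $\omega\in c_1(L)$ that controls both infima at once, after which Theorem~\ref{d555} closes the argument.
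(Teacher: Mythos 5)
Your first two steps coincide with the paper's proof, which is literally one line: ``Replacing $\omega$ by $c_1(L,h)$ in Theorem~\ref{d555}.'' The observation that $\kappa\equiv 0$ on $\mathbb{C}^m$ and the maximum-principle argument that $\underline{[c_1(L)/\omega]}=1$ for a positive Chern form $\omega\in c_1(L)$ are both correct (the latter is a useful clarification the paper leaves implicit).

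The third step, however, introduces a claim the paper does not make, and the justification you offer for it does not hold up. You assert that a convex combination of two positive representatives $\omega^{(1)},\omega^{(2)}\in c_1(L)$, each nearly optimal for one of the two ratios, yields a single $\omega\in c_1(L)$ that is simultaneously near-optimal for both. Write $\omega_t=(1-t)\omega^{(1)}+t\omega^{(2)}$. From $\omega_t\geq (1-t)\omega^{(1)}$ one gets $\overline{[c_1(K_V^*)/\omega_t]}\leq (1-t)^{-1}\,\overline{[c_1(K_V^*)/\omega^{(1)}]}$, and from $\omega_t\geq t\,\omega^{(2)}$ one gets $\overline{[c_1(L_A)/\omega_t]}\leq t^{-1}\,\overline{[c_1(L_A)/\omega^{(2)}]}$. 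Controlling the first ratio forces $t\to 0$, which destroys control of the second, and vice versa; the midpoint $t=1/2$ only gives each bound up to a factor of $2$. Moreover, since any two positive representatives of $c_1(L)$ differ by $dd^c\phi$ on compact $V$, no representative strictly dominates another pointwise, so one cannot simply take a dominating element either. Thus the ``joint $\varepsilon$-approximation'' you rely on is unproved, and the $\partial\bar\partial$-lemma as invoked does not produce it. The paper sidesteps this entirely by treating the substitution $\omega=c_1(L,h)$ as already yielding the class-level quotients; your added step is not needed to match the paper's argument, and as written it is incorrect.
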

\begin{proof} Replace $\omega$ by $c_1(L,h)$ in Theorem \ref{d555}.
\end{proof}

\begin{cor}\label{f75}
Let $L\rightarrow V$ be a positive  line bundle, and let $D\in |L|$ be a hypersurface. If there is a hypersurface  $A\subset V$ containing the non-normal-crossing singularities of $D$ such that
$$\overline{\left[\frac{c_1(K^*_V)}{c_1(L)}\right]}+\gamma_{A,D}\overline{\left[\frac{c_1(L_A)}{c_1(L)}\right]}<1.$$
Then every meromorphic mapping $f:M\rightarrow V\setminus D$
satisfying
$$ \liminf_{r\rightarrow\infty}\frac{r^{2}\kappa(r)}{T_f(r,L)}=0$$
is differentiably degenerate, where $\kappa$ is defined by $(\ref{kappa}).$ 
\end{cor}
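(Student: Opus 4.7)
The plan is to argue by contradiction, assuming that $f\colon M\rightarrow V\setminus D$ is differentiably non-degenerate, and then to apply Theorem \ref{d555} in the special case $q=1$ with the single hypersurface $D_1=D$ and the given $A\supset\mathrm{Sing}(D)$. Because $f(M)$ is disjoint from $D$, the pullback divisor $f^{*}D$ is empty and hence $\overline{N}_f(r,D)\equiv 0$, which forces the defect to attain its maximum value $\Theta_f(D)=1$.

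Next I would specialize the Hermitian form $\omega$ on $V$ in Theorem \ref{d555}. Since $L$ is positive, there exists a Hermitian metric $h$ on $L$ whose Chern form $\omega:=c_1(L,h)=-dd^c\log h$ is strictly positive, so $\omega$ is a legitimate choice of Hermitian metric form on $V$. With this choice $\omega\in c_1(L)$, hence
\[
\underline{\left[\frac{c_1(L)}{\omega}\right]}=1,\qquad \overline{\left[\frac{c_1(K_V^*)}{\omega}\right]}=\overline{\left[\frac{c_1(K_V^*)}{c_1(L)}\right]},\qquad \overline{\left[\frac{c_1(L_A)}{\omega}\right]}=\overline{\left[\frac{c_1(L_A)}{c_1(L)}\right]}.
\]
Moreover $T_f(r,L)=T_f(r,\omega)+O(1)$, so the growth hypothesis $\liminf_{r\rightarrow\infty}r^{2}\kappa(r)/T_f(r,L)=0$ is equivalent to the hypothesis $\liminf_{r\rightarrow\infty}r^{2}\kappa(r)/T_f(r,\omega)=0$ demanded by Theorem \ref{d555}. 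Substituting $\Theta_f(D)=1$ into the conclusion of Theorem \ref{d555} then yields
\[
1\leq \overline{\left[\frac{c_1(K^*_V)}{c_1(L)}\right]}+\gamma_{A,D}\overline{\left[\frac{c_1(L_A)}{c_1(L)}\right]},
\]
contradicting the standing hypothesis that the right-hand side is strictly less than $1$.

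The proof is essentially a packaging of Theorem \ref{d555}; there is no serious analytic difficulty. The one point to watch is that Theorem \ref{d555} is stated for a general Hermitian form $\omega$, and we need it for $\omega\in c_1(L)$, so the mild observation is that positivity of $L$ permits this choice and that the two growth conditions (with $T_f(r,L)$ and $T_f(r,\omega)$) coincide up to a bounded error. Everything else---the vanishing of $\overline{N}_f(r,D)$ and the resulting $\Theta_f(D)=1$---is immediate from the assumption $f(M)\cap D=\emptyset$.
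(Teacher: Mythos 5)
Your proof is correct and follows exactly the route the paper implicitly intends: specialize Theorem \ref{d555} to $q=1$, observe $\overline{N}_f(r,D)\equiv 0$ hence $\Theta_f(D)=1$, and substitute $\omega=c_1(L,h)$ with $h$ a positive metric so that $\underline{[c_1(L)/\omega]}=1$ and the bracket quantities with denominator $\omega$ reduce to those with denominator $c_1(L)$ (the paper does the identical substitution in the proof of Corollary \ref{shide}). The reduction of the growth condition from $T_f(r,\omega)$ to $T_f(r,L)$ under this choice of $\omega$ is the one minor point worth mentioning, and you handled it.
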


\begin{cor}  Let $D\subset\mathbb P^n(\mathbb C)$ be a hypersurface
of degree $d_D.$ If there is  a hypersurface  $A\subset\mathbb P^n(\mathbb C)$ of degree $d_A$
 containing the non-normal-crossing singularities of $D$
such that
$d_A\gamma_{A,D}+n+1<d_D.$
Then every meromorphic mapping $f:M\rightarrow \mathbb P^n(\mathbb C)\setminus D$
satisfying
$$ \liminf_{r\rightarrow\infty}\frac{r^{2}\kappa(r)}{T_f(r,L_D)}=0$$
 is differentiably degenerate, where $\kappa$ is defined by $(\ref{kappa}).$ 
\end{cor}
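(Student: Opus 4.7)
The plan is to obtain this as an immediate specialization of Corollary \ref{f75} to the case $V=\mathbb P^n(\mathbb C)$ and $L=L_D$. First I would observe that all hypotheses of Corollary \ref{f75} transfer cleanly: $L_D$ is a positive line bundle on $\mathbb P^n(\mathbb C)$ because $D$ is an effective hypersurface of degree $d_D\geq 1$; the image condition $f(M)\subset \mathbb P^n(\mathbb C)\setminus D$ means $f$ omits $D$; and the growth assumption $\liminf_{r\to\infty} r^2\kappa(r)/T_f(r,L_D)=0$ is literally the hypothesis of Corollary \ref{f75} with $L=L_D$. The only nontrivial task is to identify the two $\overline{[\,\cdot\,]}$-ratios explicitly on $\mathbb P^n(\mathbb C)$.

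Next I would compute those ratios. Since $\mathrm{Pic}(\mathbb P^n(\mathbb C))\cong\mathbb Z$ is generated by the hyperplane bundle $L_H$ with $c_1(L_H)=\omega_{FS}>0$, one has $L_D\cong L_H^{\otimes d_D}$, $L_A\cong L_H^{\otimes d_A}$, and $K_V^*\cong L_H^{\otimes(n+1)}$. Consequently all three first Chern classes are positive scalar multiples of $\omega_{FS}$, and for any choice of Hermitian metric the Chern forms differ from the standard representatives $k\omega_{FS}$ only by $dd^c$-exact forms. Together with the positivity of $\omega_{FS}$, this makes the $\inf$ and $\sup$ defining $\overline{[\,\cdot\,]}$ and $\underline{[\,\cdot\,]}$ collapse to the numerical ratios
\[
\overline{\left[\frac{c_1(K_V^*)}{c_1(L_D)}\right]}=\frac{n+1}{d_D},\qquad \overline{\left[\frac{c_1(L_A)}{c_1(L_D)}\right]}=\frac{d_A}{d_D}.
\]

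Plugging these into the hypothesis of Corollary \ref{f75}, the inequality
\[
\overline{\left[\frac{c_1(K_V^*)}{c_1(L_D)}\right]}+\gamma_{A,D}\overline{\left[\frac{c_1(L_A)}{c_1(L_D)}\right]}<1
\]
is, after clearing the positive denominator $d_D$, exactly the stated condition $d_A\gamma_{A,D}+n+1<d_D$. Therefore Corollary \ref{f75} applies and forces $f$ to be differentiably degenerate, which is the conclusion.

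There is essentially no obstacle in this argument; the only place one needs to be mildly careful is justifying that the infimum/supremum in the definitions of $\overline{[\,\cdot\,]}$ and $\underline{[\,\cdot\,]}$ are actually attained (up to the appropriate sign) by the standard Fubini--Study representatives on $\mathbb P^n(\mathbb C)$. This is a short verification using the $\partial\bar\partial$-lemma on a compact K\"ahler manifold together with positivity of $\omega_{FS}$, and could be handled in one line. Everything else is bookkeeping.
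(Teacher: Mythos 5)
Your proposal is correct and matches the paper's proof exactly: both specialize Corollary \ref{f75} to $V=\mathbb P^n(\mathbb C)$, identify the two ratios as $(n+1)/d_D$ and $d_A/d_D$ via $K_{\mathbb P^n}^*\cong L_H^{\otimes(n+1)}$, $L_A\cong L_H^{\otimes d_A}$, $L_D\cong L_H^{\otimes d_D}$, and then observe that the hypothesis $d_A\gamma_{A,D}+n+1<d_D$ is precisely the inequality required by Corollary \ref{f75}.
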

\begin{proof} The conditions imply that
$$\overline{\left[c_1(K^*_{\mathbb P^n(\mathbb C)})/c_1([D])\right]}+\gamma_{A,D}\overline{\left[c_1([A])/c_1([D])\right]}= \frac{n+1}{d_D}+\gamma_{A,D}\frac{d_A}{d_D}<1.$$
By Corollary \ref{f75}, we see that the corollary holds.
\end{proof}

\vskip\baselineskip

\label{lastpage-01}
\end{document}